\newtheorem{remark}[theorem]{Remark}
\newcommand{\dx}{\partial_x}
\newcommand{\dxn}{\nabla_x}
\newcommand{\dkn}{\nabla_k}
\newcommand{\colQ}{\mathcal{Q}}
\newcommand{\energy}{\mathcal{E}}
\newcommand{\half}{\frac{1}{2}}
\newcommand{\Mel}{M_\text{el}}
\newcommand{\Qel}{\mathcal{Q}_\text{el}}
\newcommand{\eps}{\epsilon}
\newcommand{\Dt}{\Delta t}
\newcommand{\Dk}{\Delta k}
\newcommand{\Dx}{\Delta x}
\newcommand{\vel}{\sqrt{\theta}}
\title{ 
An Asymptotic-Preserving Scheme for the Semiconductor Boltzmann Equation toward the Energy-Transport Limit
}
\author{Jingwei Hu\thanks{Institute for Computational Engineering and Sciences (ICES), The University of Texas at Austin, 201 East 24th St, Stop C0200, Austin, TX 78712 (hu@ices.utexas.edu).}
 \and Li Wang\thanks{Department of Mathematics, University of California, Los Angeles, 520 Portola Plaza, Los Angeles, CA 90095 (liwang@math.ucla.edu).}}
\begin{document}
\maketitle

\begin{abstract}
We design an asymptotic-preserving scheme for the semiconductor Boltzmann equation which leads to an energy-transport system for electron mass and internal energy as mean free path goes to zero. To overcome the stiffness induced by the convection terms, we adopt an even-odd decomposition to formulate the equation into a diffusive relaxation system. New difficulty arises in the two-scale stiff collision terms, whereas the simple BGK penalization does not work well to drive the solution to the correct limit. We propose a clever variant of it by introducing a threshold on the stiffer collision term such that the evolution of the solution resembles a Hilbert expansion at the continuous level. Formal asymptotic analysis and numerical results are presented to illustrate the efficiency and accuracy of the new scheme.
\end{abstract}

\begin{keywords}
semiconductor Boltzmann equation, energy-transport system, asymptotic-preserving scheme, fast spectral method.
\end{keywords}

\begin{AMS}
82D37, 35Q20, 65N06, 65N12, 65N35.
\end{AMS}

\pagestyle{myheadings}
\thispagestyle{plain}
\markboth{J. HU and L. WANG}{AP SCHEME FOR SEMICONDUCTOR BOLTZMANN EQUATION}


\section{Introduction}

The semiconductor Boltzmann equation describes the transport of charge carriers in semiconductor devices. It is derived following a statistical approach by incorporating the quantum mechanical effects semiclassically, thus provides accurate description of the physics at the kinetic level \cite{MRS, Degond, Jungel}. A dimensionless form of this equation usually contains small parameters such as the mean free path or time. Besides the high dimensionality of the probability distribution function, the presence of these small parameters poses tremendous computational challenge since one has to numerically resolve the small scales.

To save the computational cost, in the past decades, various macroscopic models were derived from the Boltzmann equation based on assumptions of different dominating effects. One of the well accepted model, for example, is the drift-diffusion equation which consists of mass continuity equation for electrons (or holes) \cite{Poupaud91, GP}. Ideally, if the parameters in the kinetic equation are uniformly small in the entire domain of interest, then the macroscopic models suffice to describe the physical phenomena, and it is more efficient to just solve them \cite{JP97, DJP00, Ringhofer01}. In practical applications, however, the validity of these models may break down in part of the domain (parameters are not small anymore), and one is forced to solve the kinetic equation which contains mesoscopic information \cite{CGMS06, CGMS09}. A natural solution to this situation is a domain decomposition approach \cite{BTPQ94, KNS00}, but finding the interface condition connecting the kinetic and macroscopic equations is a highly non-trivial task. Another line of research is to find a unified scheme for the kinetic equation such that when the small parameter goes to zero, it automatically becomes a macroscopic solver. This designing concept leads to the asymptotic-preserving (AP) scheme \cite{Jin_Rev}, which was first introduced by S. Jin for transport equations in diffusive regimes \cite{Jin_AP}. In the semiconductor framework, an initial effort toward the AP schemes was proposed in \cite{JP00} for the linear Boltzmann equation with an anisotropic collision term, whose computation was further improved in \cite{Deng}. Recently a higher order scheme with a less strict stability condition was constructed in the sense that the parabolic CFL constraint is relaxed to a hyperbolic one \cite{DPR13}. All these works consider a linear collision operator with smooth kernel which uniquely defines an equilibrium state. As a result, the corresponding macroscopic equation is in the form of a drift-diffusion equation. Although this equation gives satisfactory simulation results for semiconductor devices on the micrometer scale, it is not able to capture the hot-electron effects in submicron devices \cite{Jungel10}. High field scaling deals with this problem to some extent, but it only works for the situation where the field effect is strong enough to balance the collision \cite{JW13, CL}.

In this work, we are interested in a more realistic semiconductor Boltzmann equation \cite{AD96, DLS04}. By considering the elastic collision as dominant, and electron-electron correlation as sub-dominant effects, one can pass on the asymptotic limit to obtain an energy-transport (ET) model. It consists of a system of conservation laws for mass and internal energy of charge carriers with fluxes computed through a constitutive relation \cite{JM73}. To design an AP scheme for such kinetic equation, we face two-fold challenge: 1. the convection terms are stiff; 2. two stiff collision terms live on different scales. The convection terms can be treated by an even-odd decomposition as in \cite{JPT00, JP00}. For the collision terms, due to their complicated forms, we choose to penalize them with a suitable BGK operator \cite{FJ10}. However, unlike the usual collision operator with smoothed kernel, the leading elastic operator has non unique null space (the kernel is a Delta function). Only when the electron-electron operator in next level takes into effect, the solution can be eventually driven to a fixed Fermi-Dirac distribution. A closer examination of the asymptotic behavior of the solution reveals that the penalization should be performed wisely, otherwise it won't capture the correct limit. To this end, we propose a {\it thresholded penalization} scheme. Simply speaking, when the threshold is satisfied, we turn off the leading order mechanism and move to the next order, which in some sense resembles the Hilbert expansion in the continuous case. We will show that this new scheme, under certain assumptions, satisfies the following four properties ($\alpha$ denotes the small parameter; $\Dt$, $\Dx$, and $\Dk$ are the time step and mesh size in spatial and wave vector (momentum) domain):
\begin{enumerate}
\item For fixed $\alpha$, it is consistent to the Boltzmann equation when $\Delta t, \Delta x, \Delta k\rightarrow 0$.
\item For fixed $\Delta t, \Delta x, \Delta k$, it becomes a discretization to the limiting ET system when $\alpha\rightarrow 0$.
\item It is uniformly stable for a wide range of $\alpha$, from $\alpha=O(1)$ to $\alpha\ll 1$.
\item Implicit terms can be implemented explicitly (free of Newton-type solvers).
\end{enumerate}
An important ingredient in this AP scheme is the accurate numerical solvers for the collision operators. Since the electron-electron operator falls into a special case of the quantum Boltzmann operator, we adopt the fast spectral method developed in \cite{HY12}. For the elastic collision, it is desirable to evaluate it in the same spectral framework but the direct computation would be very expensive. We propose a new fast method by exploring the low-rank structure in the coefficient matrix.


The rest of the paper is organized as follows. In the next section we give a brief review of the scalings of the semiconductor Boltzmann equation and the derivation of the ET model through a systematic approach. Section 3 is devoted to a detailed description of our schemes. We will present it in a pedagogical way that the spatially homogeneous case is considered first with an emphasis on the two-scale stiff collision terms, and then embrace the spatial dependence to treat the full problem. In either case, the asymptotic property of the numerical solution is carefully analyzed. The spectral methods for computing the collision operators are gathered at the end. In Section 4 we give several numerical examples including the simulation of a 1-D $n^+-n-n^+$ silicon diode to illustrate the efficiency, accuracy, and asymptotic properties of the scheme. Finally, the paper is concluded in Section 5.


\section{The semiconductor Boltzmann equation and its energy-transport limit}

The Boltzmann transport equation that describes the evolution of electrons in the conduction band of a semiconductor reads \cite{MRS, Degond, Jungel}
\begin{equation} \label{eqn: semiB1}
\partial_t f + \frac{1}{\hbar} \dkn \varepsilon(k) \cdot \dxn f + \frac{q}{\hbar} \dxn V(x,t)\cdot \dkn f = \colQ(f), \quad x\in \Omega \subset \mathbb{R}^d, \ \  k\in B \subset \mathbb{R}^d, \ \ d=2,3,
\end{equation}
where $f(x,k,t)$ is the electron distribution function of position $x$, wave vector $k$, and time $t$. $\hbar$ is the reduced Planck constant, and $q$ is the positive elementary charge. The first Brillouin zone $B$ is the primitive cell in the reciprocal lattice of the crystal. For simplicity, we will restrict to the parabolic band approximation, where $B$ can be extended to the whole space $B=\mathbb{R}^d$, and the energy-band diagram $\varepsilon(k)$ is given by
\begin{equation*}
\varepsilon(k) =  \frac{\hbar^2}{2m^*}|k|^2,
\end{equation*}
where $m^*$ is the effective mass of electrons. 

In principle, the electrostatic potential $V(x,t)$ is produced self-consistently by the electron density with a fixed ion background of doping profile $h(x)$ through the Poisson equation:
\begin{equation} \label{eqn: Poisson1}
\epsilon_0\dxn (\epsilon_r(x) \dxn V(x,t)) = q(\rho(x,t) - h(x)), 
\end{equation}
where 
\begin{equation*}
\rho(x,t):=\int_{\mathbb{R}^d}f(x,k,t)\, \frac{g}{(2\pi)^d}\,dk 
\end{equation*}
is the electron density (the spin degeneracy $g=2s+1$, with $s=1/2$ being the spin of electrons). $\epsilon_0$ and $\epsilon_r(x)$ are the vacuum and the relative material permittivities. The doping profile
$h(x)$ takes into account the impurities due to acceptor and donor ions in the semiconductor device. 

The collision operator $\colQ$ explains three different effects:
\begin{equation*} 
\colQ = \colQ_{\text{imp}} + \colQ_{\text{ph}} + \colQ_{\text{ee}},
\end{equation*}
where $\colQ_{\text{imp}}$ and $\colQ_{\text{ph}}$ account for the interactions between electrons and the lattice defects caused by ionized impurities and crystal vibrations (also called phonons); $\colQ_{\text{ee}}$ describes the correlations between electrons themselves. Specifically,
\begin{eqnarray*}
&&\colQ_{\text{imp}} (f)(k) =\int_{\mathbb{R}^d} \phi_{\text{imp}}(k,k') \delta (\varepsilon' - \varepsilon)(f'-f)\,dk',
\\ && \colQ_{\text{ph}}(f)(k) = \int_{\mathbb{R}^d} \phi_{\text{ph}}(k,k')\left\{ \left[ (N_{\text{ph}}+1) \delta \left(\varepsilon - \varepsilon' +\varepsilon_{\text{ph}} \right)  + N_{\text{ph}}  \delta \left(\varepsilon - \varepsilon' - \varepsilon_{\text{ph}}\right)\right]f'(1- f) \right. \nonumber
\\ &&  \hspace{0.8in} \left.- \left[ (N_{\text{ph}}+1) \delta \left(\varepsilon' - \varepsilon + \varepsilon_{\text{ph}} \right)  + N_{\text{ph}}  \delta \left(\varepsilon' - \varepsilon - \varepsilon_{\text{ph}} \right)\right] f(1-  f')  \right\}\, dk',
\\ && \colQ_{\text{ee}}(f)(k) = \int_{\mathbb{R}^{3d}} \phi_{\text{ee}}(k,k_1,k',k_1') \delta(\varepsilon' + \varepsilon_1' -\varepsilon - \varepsilon_1 )\delta(k' + k_1' -k - k_1 ) \nonumber
\\  &&  \hspace{0.8in} \times\Big[   f'f_1'(1-f)(1-f_1) - ff_1(1-f')(1-f_1')  \Big] \, dk_1 dk' dk_1',
\end{eqnarray*}
where $\delta$ is the Dirac measure, $\varepsilon$, $\varepsilon'$, $f$, $f'$, $f_1$, $f_1'$ are short notations for $\varepsilon(k)$, $\varepsilon(k')$, $f(x,k,t)$, $f(x,k',t)$, $f(x,k_1,t)$, and $f(x,k_1',t)$ respectively. $\varepsilon_{\text{ph}}$ is the phonon energy, and $N_{\text{ph}}$ is the phonon occupation number:
\begin{equation*}
N_{\text{ph}}= \frac{1}{e^{\frac{\varepsilon_{\text{ph}}}{k_B T_L}}-1},
\end{equation*}
where $k_B$ is the Boltzmann constant and $T_L$ is the lattice temperature. The scattering matrices $\phi_{\text{imp}}(k, k')$ and $\phi_{\text{ph}}(k,k')$ are symmetric in $k$ and $k'$:
\begin{equation*}
\phi_{\text{imp}}(k, k')=\phi_{\text{imp}}(k', k), \quad \phi_{\text{ph}}(k,k')=\phi_{\text{ph}}(k',k);
\end{equation*}
$\phi_{\text{ee}}(k,k_1, k', k_1')$ is symmetric pair-wisely for four variables:
\begin{equation*}
\phi_{\text{ee}}(k,k_1, k', k_1')=\phi_{\text{ee}}(k_1,k, k', k_1')=\phi_{\text{ee}}(k',k_1', k, k_1).
\end{equation*}
They are all determined by the underlying interaction laws.


\subsection{Nondimensionalization of the Boltzmann equation}

In order to nondimensionalize the Boltzmann equation (\ref{eqn: semiB1}), we introduce the following typical values:
\begin{eqnarray*}
\rho_0 : && \textrm{ typical density;}
\\ \varepsilon_0 = q V_0: &&  \textrm{ typical kinetic energy of electrons, $V_0$ is the applied bias;} 
\\  k_0:  && \textrm{ typical norm of wave vector $k$ such that $\varepsilon(k_0) =\varepsilon_0/2$;} 
\\  f_0 = \frac{(2\pi)^d\rho_0}{g k_0^d}=\eta: && \textrm{ typical distribution function scale;}
\\t_0 : && \textrm{ typical time scale;}
\\v_0 = \frac{\varepsilon_0}{\hbar k_0}:  && \textrm{ typical velocity scale;}
\\x_0 = t_0 v_0:  && \textrm{ typical length scale;}
\\ \phi_{\text{imp},0}, \phi_{\text{ph},0}, \phi_{\text{ee},0}: && \textrm{ typical values of transition rates $\phi_{\text{imp}}(k,k')$, $\phi_{\text{ph}}(k,k')$, $\phi_{\text{ee}}(k,k_1,k', k_1')$,}
\end{eqnarray*}
and define also the dimensionless parameters:
\begin{equation*}
 \alpha^2 = \frac{\varepsilon_{\text{ph}}}{\varepsilon_0}, \  \gamma^2=\frac{k_BT_L}{\varepsilon_0}, \  \nu_{\text{imp}} = \frac{\phi_{\text{imp},0} k_0^d t_0}{\varepsilon_0}, \   \nu_{\text{ph}} =  \frac{\phi_{\text{ph},0} k_0^d t_0}{\varepsilon_0}, \  \nu_{\text{ee}} = \frac{\phi_{\text{ee},0} k_0^d t_0}{\varepsilon_0}\frac{(2\pi)^d\rho_0}{g}.
\end{equation*}
After performing a change of variables as 
\begin{equation*}
\tilde{f} =\frac{f}{f_0}, \quad \tilde{t}=\frac{t}{t_0}, \quad \tilde{x}= \frac{x}{x_0}, \quad \tilde{k} = \frac{k}{k_0}, \quad \tilde{\varepsilon} =\frac{\varepsilon}{\varepsilon_0}, \quad  \tilde{V}=\frac{V}{V_0}, \quad \tilde{\phi}_{\bullet} = \frac{\phi_{\bullet}}{ {\phi}_{\bullet,0}}, 
\end{equation*}
(\ref{eqn: semiB1}) becomes
\begin{equation*}
\partial_t  f + \dkn \varepsilon \cdot \dxn f  + \dxn V \cdot \dkn  f =  \colQ_{\text{imp}}(f)
 + \colQ_{\text{ph}}(f)+ \colQ_{\text{ee}}(f),
\end{equation*}
where we have dropped the tildes without ambiguity. The collision operators take the following dimensionless form
\begin{eqnarray}
&&\colQ_{\text{imp}} (f)(k) = \int_{\mathbb{R}^d} \Phi_{\text{imp}}(k,k') \delta (\varepsilon'- \varepsilon)(f'-f) \,dk',  \nonumber
\\ &&\colQ_{\text{ph}}(f)(k) =  \int_{\mathbb{R}^d}  \Phi_{\text{ph}}(k,k')\left\{ \left[ (N_{\text{ph}}+1) \delta \left(\varepsilon \!-\! \varepsilon' \!+\! \alpha^2 \right)  + N_{\text{ph}}  \delta \left(\varepsilon \!-\! \varepsilon' \!-\! \alpha^2 \right)\right]f'(1\!- \eta f) \right. \nonumber
\\ &&  \hspace{0.8in} \left.- \left[ (N_{\text{ph}}+1) \delta \left(\varepsilon' - \varepsilon + \alpha^2 \right)  + N_{\text{ph}}  \delta \left(\varepsilon' - \varepsilon - \alpha^2 \right)\right] f(1- \eta f')  \right\} \,dk',  \nonumber
\\ && \colQ_{\text{ee}}(f)(k) =\int_{\mathbb{R}^{3d}} \Phi_{\text{ee}}(k,k_1,k',k_1') \delta(\varepsilon' + \varepsilon_1' -\varepsilon - \varepsilon_1  )\delta (k' + k_1' -k - k_1  ) \nonumber
\\  &&  \hspace{0.8in}\times\Big[   f'f_1'(1-\eta f)(1- \eta f_1) - ff_1(1- \eta f')(1 \!-\! \eta f_1')  \Big]\,  dk_1 dk' dk_1', \label{Qee}
\end{eqnarray}
where $\Phi_{\text{imp}} =  \nu_{\text{imp}} \phi_{\text{imp}}$, $\Phi_{\text{ph}} =  \nu_{\text{ph}} \phi_{\text{ph}}$, $\Phi_{\text{ee}} =  \nu_{\text{ee}} \phi_{\text{ee}}$, and 
\begin{equation*}
N_{\text{ph}}=\frac{1}{e^{\alpha^2/\gamma^2}-1}.
\end{equation*}
The energy-band diagram is now simply 
\begin{equation} \label{para}
\varepsilon(k) = \frac{1}{2}|k|^2.
\end{equation}
The Poisson equation (\ref{eqn: Poisson1}) becomes
\begin{equation} \label{eqn: Poisson}
C_0 \dxn (\epsilon_r(x) \dxn V(x,t)) = \rho(x,t) - h(x), 
\end{equation}
where $C_0=\frac{\eps_0V_0}{qx_0^2\rho_0}$ is the square of the scaled Debye length, and 
\begin{equation*} 
\rho(x,t)=\int_{\mathbb{R}^d} f(x,k,t)\,dk.
\end{equation*}


\subsection{Elastic approximation of the electron-phonon interactions}

We are interested in a high energy scale \cite{ADG96, AD96}, at which the relative energy gain or loss of electron energy during a phonon collision is very small, i.e.,
\begin{equation*}
\alpha \ll 1.
\end{equation*}
In addition, we assume that 
\begin{equation*}
\frac{\alpha}{\gamma}\sim O(1),
\end{equation*}
which means that at the high energy scale, the phonon energy $\varepsilon_{\text{ph}}$ and the lattice thermal energy $k_BT_L$ are considered as the same order of magnitude, and much smaller compared with the electron energy $\varepsilon_0$.

Treating $\alpha$ as small parameter, one can expand the electron-phonon collision operator $\colQ_{\text{ph}}$ as
\begin{equation*}
\colQ_{\text{ph}}(f)(k) =\int_{\mathbb{R}^d} (2N_{\text{ph} }+1) \Phi_{\text{ph}}(k, k')\delta (\varepsilon'  - \varepsilon)(f'-f) \,dk' + \alpha^2 \colQ_{\text{ph}}^{\text{inel}}(f)(k),
\end{equation*}
where the first term is the elastic approximation and the second term is the inelastic correction. Therefore, the total collision operator $\colQ$ can be recast as
\begin{equation*}
\colQ(f) = \colQ_{\text{el}}(f) + \colQ_{\text{ee}}(f) + \alpha^2\colQ_{\text{ph}}^{\text{inel}}(f), 
\end{equation*}
with
\begin{equation} \label{Qel}
\colQ_{\text{el}}(f)(k) = \int_{\mathbb{R}^d} \Phi_{\text{el}}(k,k') \delta \left(  \varepsilon' - \varepsilon  \right) \left( f' - f\right) \,dk',
\end{equation}
and
\begin{equation*}
\Phi_{\text{el}}(k,k')=  \Phi_{\text{imp}}(k,k') +(2N_{\text{ph}}+1) \Phi_{\text{ph}} (k,k').
\end{equation*}

Since $N_{\text{ph}}\sim O(1)$, it is reasonable to assume $\colQ_{\text{el}}$ and $\colQ_{\text{ph}}^{\text{inel}}$ are both of $O(1)$ (refer to \cite{Reggiani} for physical data). However, it is much delicate to estimate the electron-electron collision frequency as it depends on the distribution function itself. Following the discussion in \cite{DLS04}, we assign this term $O(\alpha)$. That is, the electron-electron interactions are not as strong as elastic collisions, yet their density is not small enough to be safely neglected.

The final form of the scaled Boltzmann equation is thus
\begin{equation} \label{scaledB}
\partial_t  f + \dkn \varepsilon \cdot \dxn f  + \dxn V \cdot \dkn  f =  \colQ_{\text{el}}(f) + \alpha \colQ_{\text{ee}}(f) + \alpha^2\colQ_{\text{ph}}^{\text{inel}}(f),
\end{equation}
where $\colQ_{\text{el}}$, $\colQ_{\text{ee}}$ are given by (\ref{Qel}) and (\ref{Qee}) respectively (the exact form of $\colQ_{\text{ph}}^{\text{inel}}$ will not be needed in the following discussion and is thereby omitted).


\subsection{Diffusive regime and the energy-transport limit}

To derive a macroscopic model, we consider the time and length scales in a diffusive regime: $t'= \alpha^2 t$, $x' = \alpha x $, then equation (\ref{scaledB}) rewrites as
\begin{equation} \label{kinetic1}
\partial_t f + \frac{1}{\alpha } \left(  \dkn \varepsilon \cdot \dxn f + \dxn V \cdot \dkn f  \right)  = \frac{1}{\alpha^2} \colQ_{\text{el}}(f) +  \frac{1}{\alpha} \colQ_{\text{ee}}(f) +\colQ_{\text{ph}}^{\text{inel}}(f),
\end{equation}
which is the main kinetic equation we are going to study for the rest of the paper. This subsection is devoted to a formal derivation of the asymptotic limit of (\ref{kinetic1}) as $\alpha\rightarrow 0$. Our approach, following that of \cite{DLS04}, is a combination of the Hilbert expansion and the moment method. To this end, we first list the required properties of the collision operators $\colQ_{\text{el}}$ and $\colQ_{\text{ee}}$. These will also be useful in designing numerical schemes.

\begin{proposition}\cite{AD96}  \label{prop: qel}
\begin{enumerate}
\item For any ``regular" test function $g(k)$,
\begin{equation*}
\int_{\mathbb{R}^d} \colQ_{\text{el}}(f)g\,dk=-\frac{1}{2}\int_{\mathbb{R}^{2d}}  \Phi_{\text{el}}(k,k') \delta \left(  \varepsilon' - \varepsilon  \right) \left( f' - f\right) (g'-g)\,dk dk'.
\end{equation*}
In particular, for any $g(\varepsilon(k))$, $$\int_{\mathbb{R}^d} \colQ_{\text{el}}(f)g(\varepsilon)\,dk=0.$$

\item $\colQ_{\text{el}}(f)$ is a self-adjoint, non-positive operator on $L^2(\mathbb{R}^d)$. 
\item The null space of $\colQ_{\text{el}}(f)$ is given by
\begin{equation*}
\mathcal{N}(\colQ_{\text{el}})=\{f(\varepsilon(k)),  \quad \forall f\}.
\end{equation*}
\item The orthogonal complement of $\mathcal{N}(\colQ_{\text{el}})$ is
\begin{equation*}
\mathcal{N}(\colQ_{\text{el}})^\perp= \{g(k) \   | \   \int_{S_\varepsilon} g(k)\,d N_\varepsilon(k)=0,  \quad \forall \varepsilon \in R \},
\end{equation*}
where the integral is defined through the ``coarea formula" \cite{Federer}: for any ``regular" functions $f$ and $\varepsilon(k): \mathbb{R}^d\rightarrow R$, it holds
\begin{equation} \label{coarea}
\int_{\mathbb{R}^d}f(k)\,dk=\int_{R} \left(\int_{S_{\varepsilon}} f(k) \,dN_\varepsilon(k)\right)\,d\varepsilon.
\end{equation}
Here $S_\varepsilon=\{k\in \mathbb{R}^d, \ \varepsilon(k)=\varepsilon\}$ denotes the surface of constant energy $\varepsilon$, $dN_\varepsilon(k)$ is the coarea measure, and $N(\varepsilon):=\int_{S_{\varepsilon}}\,dN_{\varepsilon}(k)$ is the energy density-of-states. Under the parabolic band approximation (\ref{para}), (\ref{coarea}) is just a spherical transformation:
\begin{equation*} 
\int_{\mathbb{R}^d}f(k)\,dk=\int_0^{\infty} \left( \int_{\mathbb{S}^{d-1}} f(|k|\sigma)|k|^{d-2}\,d \sigma\right)\,d\varepsilon,
\end{equation*}
and $N(\varepsilon)= \int_{\mathbb{S}^{d-1}}|k|^{d-2}\,d \sigma=m(\mathbb{S}^{d-1})|k|^{d-2}$.

\item The range of $\colQ_{\text{el}}$ is $\mathcal{R}(\colQ_{\text{el}})= \mathcal{N}(\colQ_{\text{el}})^\perp$. The operator is invertible as an operator from $\mathcal{N}(\colQ_{\text{el}})^\perp$ onto $\mathcal{R}(\colQ_{\text{el}})= \mathcal{N}(\colQ_{\text{el}})^\perp$. Its inverse is denoted by $\colQ_{\text{el}}^{-1}$.
\item For any $\psi(\varepsilon(k))$, we have $\colQ_{\text{el}}(f\psi)=\psi \colQ_{\text{el}}(f)$ and $\colQ_{\text{el}}^{-1}(f\psi)=\psi \colQ_{\text{el}}^{-1}(f)$.
\end{enumerate}
\end{proposition}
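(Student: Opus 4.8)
The plan is to make the symmetrization identity in part 1 the engine that drives every other property. First I would write $\int_{\mathbb{R}^d}\colQ_{\text{el}}(f)g\,dk$ as a double integral over $\mathbb{R}^{2d}$ and exchange the dummy variables $k\leftrightarrow k'$. Under this swap the kernel $\Phi_{\text{el}}(k,k')$ and the Dirac factor $\delta(\varepsilon'-\varepsilon)$ are invariant (by symmetry of $\Phi_{\text{el}}$ and evenness of $\delta$), while $f'-f$ changes sign and $g$ becomes $g'$. Averaging the original integral with its swapped copy produces the claimed symmetric form. The vanishing on functions $g(\varepsilon(k))$ is then immediate, since on the support of $\delta(\varepsilon'-\varepsilon)$ one has $\varepsilon'=\varepsilon$ and hence $g'-g=0$.

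Parts 2 through 4 follow with little extra work. For part 2, the bilinear form $\langle\colQ_{\text{el}}(f),g\rangle_{L^2}$ equals the manifestly $f\leftrightarrow g$ symmetric expression from part 1, giving self-adjointness, and the choice $g=f$ yields $\int_{\mathbb{R}^d}\colQ_{\text{el}}(f)f\,dk=-\frac{1}{2}\int_{\mathbb{R}^{2d}}\Phi_{\text{el}}\,\delta(\varepsilon'-\varepsilon)(f'-f)^2\,dk\,dk'\le 0$ under the standing physical assumption $\Phi_{\text{el}}\ge 0$. For part 3, the inclusion $\{f(\varepsilon(k))\}\subseteq\mathcal{N}(\colQ_{\text{el}})$ is the delta-function observation once more, while the reverse inclusion uses nonpositivity: $\colQ_{\text{el}}(f)=0$ forces the quadratic form to vanish, so $f'=f$ whenever $\varepsilon'=\varepsilon$, i.e. $f$ is constant on each energy shell (here I would assume $\Phi_{\text{el}}>0$ on the shells). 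Part 4 is a direct application of the coarea formula: $g\in\mathcal{N}(\colQ_{\text{el}})^\perp$ iff $\int_{\mathbb{R}^d}g(k)f(\varepsilon(k))\,dk=0$ for every profile $f$, and rewriting the left side via (\ref{coarea}) as $\int_R f(\varepsilon)\big(\int_{S_\varepsilon}g\,dN_\varepsilon\big)\,d\varepsilon$ forces the shell-average to vanish for each $\varepsilon$, which is precisely the stated characterization.

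The main obstacle is part 5, where I must upgrade ``dense range'' to ``closed range equal to $\mathcal{N}(\colQ_{\text{el}})^\perp$.'' The strategy is to observe that the operator decouples over energy shells: after integrating out $\delta(\varepsilon'-\varepsilon)$ with the coarea formula, $\colQ_{\text{el}}$ acts on each $L^2(S_\varepsilon)$ as $K-\lambda$, where $(Kf)(k)=\int_{S_\varepsilon}\Phi_{\text{el}}(k,k')f(k')\,dN_\varepsilon(k')$ is Hilbert--Schmidt (hence compact) and $\lambda(k)=\int_{S_\varepsilon}\Phi_{\text{el}}(k,k')\,dN_\varepsilon(k')$ is the collision frequency. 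This gain-minus-loss structure is Fredholm of index zero on each shell, so by the Fredholm alternative its range is closed and equals the orthogonal complement of its (self-adjoint) kernel; reassembling the shells via the coarea formula then gives $\mathcal{R}(\colQ_{\text{el}})=\mathcal{N}(\colQ_{\text{el}})^\perp$ together with invertibility of the restriction. The delicate point is uniformity in $\varepsilon$: I would invoke the physical assumption $\colQ_{\text{el}}=O(1)$ with collision frequency $\lambda$ bounded below away from the band bottom, so that the shellwise inverses assemble into a bona fide operator $\colQ_{\text{el}}^{-1}$.

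Finally, part 6 follows from part 1 together with invertibility. The commutation $\colQ_{\text{el}}(f\psi)=\psi\,\colQ_{\text{el}}(f)$ is again the delta-function identity, since $\psi(\varepsilon')=\psi(\varepsilon)$ on the support of $\delta(\varepsilon'-\varepsilon)$. For the inverse, I set $g=\colQ_{\text{el}}^{-1}(f)\in\mathcal{N}(\colQ_{\text{el}})^\perp$; then $\psi g$ also lies in $\mathcal{N}(\colQ_{\text{el}})^\perp$ because its shell-average is $\psi(\varepsilon)\int_{S_\varepsilon}g\,dN_\varepsilon=0$, and $\colQ_{\text{el}}(\psi g)=\psi\,\colQ_{\text{el}}(g)=\psi f$, so uniqueness of the inverse on $\mathcal{N}(\colQ_{\text{el}})^\perp$ yields $\colQ_{\text{el}}^{-1}(f\psi)=\psi\,\colQ_{\text{el}}^{-1}(f)$.
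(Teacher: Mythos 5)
The paper offers no proof of this proposition at all: it is quoted verbatim from \cite{AD96} as a known result, so there is nothing internal to compare your argument against. Judged on its own terms, your proposal is essentially the standard proof from that literature and is sound in parts 1--4 and 6: the $k\leftrightarrow k'$ symmetrization, the non-positivity via $g=f$ (under the implicit physical assumptions $\Phi_{\text{el}}\ge 0$, and $\Phi_{\text{el}}>0$ on the shells for the reverse inclusion in part 3), the coarea-formula characterization of $\mathcal{N}(\colQ_{\text{el}})^\perp$, and the uniqueness argument for $\colQ_{\text{el}}^{-1}(f\psi)=\psi\colQ_{\text{el}}^{-1}(f)$ are all correct as written. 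Part 5 is, as you recognize, the only genuinely hard step, and your shellwise decomposition $\colQ_{\text{el}}|_{S_\varepsilon}=K-\lambda$ with $K$ Hilbert--Schmidt (the shells are compact spheres under the parabolic band, so this is fine for continuous kernels) and the Fredholm alternative is exactly the right mechanism. The one point you flag but do not resolve is real: reassembling the shellwise inverses into a bounded operator on $\mathcal{N}(\colQ_{\text{el}})^\perp\subset L^2(\mathbb{R}^d)$ requires $\sup_\varepsilon\|Q_\varepsilon^{-1}\|<\infty$, and under the parabolic band $\lambda_{\text{el}}(\varepsilon)=\Phi_{\text{el}}(|k|,|k|)N(\varepsilon)$ with $N(\varepsilon)\propto|k|^{d-2}$ degenerates at the band bottom when $d=3$, so the spectral gap is not uniform there; the rigorous treatments handle this with weighted spaces or growth assumptions on $\Phi_{\text{el}}$, whereas your ``bounded below away from the band bottom'' hypothesis quietly excises exactly the problematic region. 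Since the paper itself only uses $\colQ_{\text{el}}^{-1}$ formally (in the Hilbert expansion and in the diffusion matrices $\mathcal{D}_{ij}$), this gap is acceptable at the paper's level of rigor, but it should be stated as an explicit hypothesis rather than an aside.
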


\begin{proposition}\cite{ADG96}  \label{prop: qee}
\begin{enumerate}
\item For any ``regular" test function $g(k)$,
\begin{eqnarray*}
\int_{\mathbb{R}^d} &&\colQ_{\text{ee}}(f)g\,dk=-\frac{1}{4}\int_{\mathbb{R}^{4d}} \Phi_{\text{ee}}(k,k_1,k',k_1') \delta(\varepsilon' \!+\! \varepsilon_1' \!-\! \varepsilon \!-\! \varepsilon_1  )\delta (k' \!+\! k_1' \!-\! k \!-\! k_1  ) \nonumber\\  
&&  \!\!\!\! \times\Big[   f'f_1'(1\!-\!\eta f)(1\!-\! \eta f_1) - ff_1(1\!-\! \eta f')(1\!-\! \eta f_1')  \Big](g'\!+\! g_1' \!-\! g \!- \! g_1)\,  dkdk_1 dk' dk_1'.
\end{eqnarray*}
In particular, we have the conservation of mass and energy 
\begin{equation*}
\int_{\mathbb{R}^d} \colQ_{\text{ee}}(f)\,dk=\int_{\mathbb{R}^d} \colQ_{\text{ee}}(f)\varepsilon \,dk=0.
\end{equation*}
\item H-theorem: let $H(f)=\ln (f/(1-\eta f))$, then $\int_{\mathbb{R}^d} \colQ_{\text{ee}}(f) H(f)\,dk \leq 0$, 
and if $f=f(\varepsilon(k))$,
\begin{equation*} 
\int_{\mathbb{R}^d} \colQ_{\text{ee}}(f) H(f)\,dk=0  \Longleftrightarrow f=M(\varepsilon(k)) \Longleftrightarrow \colQ_{\text{ee}}(f)=0,
\end{equation*}
where
\begin{equation} \label{FD}
M(\varepsilon(k))=\frac{1}{\eta}\frac{1}{z^{-1}e^{\frac{\varepsilon(k)}{T}}+1}
\end{equation}
is the Fermi-Dirac distribution function \cite{Pathria}. The variables $z$ and $T$ are the fugacity and (electron) temperature. Alternatively, $M$ can be defined in terms of the chemical potential $\mu=T\ln z $ and $T$.
\end{enumerate}
\end{proposition}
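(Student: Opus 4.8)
The two statements are the standard weak-form and $H$-theorem properties of an Uehling--Uhlenbeck-type collision operator, and the plan is to prove them by exploiting the pre/post-collisional symmetries of the kernel together with the monotonicity of the logarithm. For part 1, I would first unfold $\int_{\mathbb{R}^d}\colQ_{\text{ee}}(f)g\,dk$ into the full fourfold integral over $(k,k_1,k',k_1')$ against $\Phi_{\text{ee}}\,\delta_\varepsilon\,\delta_k$, where $\delta_\varepsilon:=\delta(\varepsilon'+\varepsilon_1'-\varepsilon-\varepsilon_1)$ and $\delta_k:=\delta(k'+k_1'-k-k_1)$, writing the gain--loss bracket as $\mathcal{F}:=f'f_1'(1-\eta f)(1-\eta f_1)-ff_1(1-\eta f')(1-\eta f_1')$. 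Then I would perform two changes of dummy variables: the swap $k\leftrightarrow k_1$, under which $\Phi_{\text{ee}}$, both delta factors, and $\mathcal{F}$ are all invariant (so $g\mapsto g_1$); and the pair swap $(k,k_1)\leftrightarrow(k',k_1')$, under which $\Phi_{\text{ee}}$ and the deltas are invariant while $\mathcal{F}\mapsto-\mathcal{F}$ (so $g\mapsto g'$, with a sign). Averaging the four resulting expressions for the same integral yields exactly the claimed symmetrized identity with prefactor $-\tfrac14$ and test factor $(g'+g_1'-g-g_1)$. Conservation is then immediate: $g\equiv1$ makes the test factor vanish identically, and $g=\varepsilon$ makes it equal to $\varepsilon'+\varepsilon_1'-\varepsilon-\varepsilon_1$, which is killed by the energy delta since $x\,\delta(x)=0$.

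For the $H$-theorem I would substitute $g=H(f)=\ln\!\big(f/(1-\eta f)\big)$ into the symmetrized identity. Setting $A:=f'f_1'(1-\eta f)(1-\eta f_1)$ and $B:=ff_1(1-\eta f')(1-\eta f_1')$ so that $\mathcal{F}=A-B$, a direct logarithm computation gives $H(f')+H(f_1')-H(f)-H(f_1)=\ln(A/B)$. Hence the entropy production equals $-\tfrac14\int \Phi_{\text{ee}}\,\delta_\varepsilon\,\delta_k\,(A-B)\ln(A/B)$, and since $\Phi_{\text{ee}}\ge0$ and $(A-B)\ln(A/B)\ge0$ for all positive $A,B$ by monotonicity of $\ln$, the integral is $\le0$.

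For the equality/equilibrium chain I would argue that the entropy production vanishes iff $A=B$ almost everywhere on the collision manifold $\{\varepsilon+\varepsilon_1=\varepsilon'+\varepsilon_1',\ k+k_1=k'+k_1'\}$, which is the same as $\colQ_{\text{ee}}(f)=0$ (vanishing integrand). Taking logarithms, $A=B$ says that $H(f)$ is a collision invariant: $H(f)+H(f_1)=H(f')+H(f_1')$ whenever momentum and energy are conserved. Restricting to $f=f(\varepsilon(k))$, $H$ is a function of $\varepsilon$ alone, so this reduces to the additive equation $\psi(\varepsilon)+\psi(\varepsilon_1)=\psi(\varepsilon')+\psi(\varepsilon_1')$ for $\varepsilon+\varepsilon_1=\varepsilon'+\varepsilon_1'$; the realizability of such energy exchanges (for $d\ge2$ there is enough angular freedom in the momentum constraint) forces $\psi$ to be affine, $H(f)=a+c\,\varepsilon$. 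Solving $\ln\!\big(f/(1-\eta f)\big)=a+c\varepsilon$ for $f$ and identifying $z=\eta e^{a}$, $T=-1/c$ recovers precisely the Fermi--Dirac profile $M$ in (\ref{FD}); conversely, inserting $f=M$ shows $A=B$, so $\colQ_{\text{ee}}(M)=0$ and the production vanishes, closing the equivalences.

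I expect the genuinely delicate step to be the classification of collision invariants in the third paragraph: turning the pointwise relation $A=B$ into the affine form of $H$ requires solving a Cauchy-type functional equation and, to have enough admissible test configurations, checking that the joint energy--momentum constraint can be met for an open set of energy quadruples. The symmetrization in part 1 is essentially bookkeeping, but it also tacitly needs ``regularity'' of $f$ and $g$ to justify manipulating the singular (delta-supported) integrals and interchanging orders of integration; I would state these admissibility hypotheses (in particular $0<\eta f<1$) explicitly rather than dwell on them.
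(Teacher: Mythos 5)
The paper states this proposition without proof, importing it verbatim from \cite{ADG96}, so there is no in-paper argument to compare against; your proof is the standard one for Uehling--Uhlenbeck operators and is the argument that reference itself uses. Your symmetrization of the weak form under $k\leftrightarrow k_1$ and $(k,k_1)\leftrightarrow(k',k_1')$, the entropy inequality via $(A-B)\ln(A/B)\ge 0$, and the classification of radial collision invariants (forcing $H(f)$ affine in $\varepsilon$ and hence $f$ Fermi--Dirac) are all correct, granted the admissibility hypotheses you rightly flag ($0<\eta f<1$, positivity of $\Phi_{\text{ee}}$ on the collision manifold, and realizability of energy quadruples for $d\ge 2$).
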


We also need the properties of the operator $\overline{\colQ_{\text{ee}}}$, an energy space counterpart of $\colQ_{\text{ee}}$: for any $F(\varepsilon(k))$, $$\overline{\colQ_{\text{ee}}}(F)(\varepsilon):=\int_{S_\varepsilon}\colQ_{\text{ee}}(F)(k)\,dN_{\varepsilon}(k).$$

\begin{proposition}\cite{AD96}  \label{prop: qeeb}
\begin{enumerate}
\item Conservation of mass and energy $$\int_R \overline{\colQ_{\text{ee}}}(F)\,d\varepsilon=\int_{R} \overline{\colQ_{\text{ee}}}(F)\varepsilon \,d\varepsilon=0.$$

\item H-theorem: let $H(F)=\ln (F/(1-\eta F))$, then $\int_R \overline{\colQ_{\text{ee}}}(F) H(F)\,d\varepsilon\leq 0$, and
\begin{equation*} 
\int_R \overline{\colQ_{\text{ee}}}(F) H(F)\,d\varepsilon=0  \Longleftrightarrow F=M(\varepsilon) \Longleftrightarrow \overline{\colQ_{\text{ee}}}(F)=0.
\end{equation*}
\end{enumerate}
\end{proposition}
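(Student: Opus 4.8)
The plan is to deduce every statement about the reduced operator $\overline{\colQ_{\text{ee}}}$ from the corresponding $k$-space property of $\colQ_{\text{ee}}$ in Proposition~\ref{prop: qee}, using the coarea formula (\ref{coarea}) as the bridge. The crucial observation is that $F$ is by hypothesis a function of $\varepsilon(k)$ alone, so $F$, $H(F)=\ln(F/(1-\eta F))$, and $\varepsilon$ are all constant on each energy surface $S_\varepsilon$; this is precisely what lets me pull such factors freely in and out of the inner surface integral.

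For part~1, I would start from the definition $\overline{\colQ_{\text{ee}}}(F)(\varepsilon)=\int_{S_\varepsilon}\colQ_{\text{ee}}(F)\,dN_\varepsilon$ and integrate in $\varepsilon$. For the mass, the coarea formula (\ref{coarea}) collapses the iterated integral to $\int_{\mathbb{R}^d}\colQ_{\text{ee}}(F)\,dk$, which vanishes by Proposition~\ref{prop: qee}(1). For the energy, since $\varepsilon$ is constant on $S_\varepsilon$, I first move the weight $\varepsilon$ inside the surface integral, turning $\int_R \overline{\colQ_{\text{ee}}}(F)\,\varepsilon\,d\varepsilon$ into $\int_R\bigl(\int_{S_\varepsilon}\colQ_{\text{ee}}(F)\,\varepsilon(k)\,dN_\varepsilon\bigr)\,d\varepsilon=\int_{\mathbb{R}^d}\colQ_{\text{ee}}(F)\,\varepsilon(k)\,dk=0$, again by Proposition~\ref{prop: qee}(1).

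Part~2 proceeds in the same spirit. Because $H(F(\varepsilon))$ is constant on $S_\varepsilon$, I can write $\int_R\overline{\colQ_{\text{ee}}}(F)\,H(F)\,d\varepsilon=\int_R\bigl(\int_{S_\varepsilon}\colQ_{\text{ee}}(F)\,H(F)\,dN_\varepsilon\bigr)\,d\varepsilon=\int_{\mathbb{R}^d}\colQ_{\text{ee}}(F)\,H(F)\,dk\le 0$ by Proposition~\ref{prop: qee}(2), which yields the entropy dissipation. The same identity shows $\int_R\overline{\colQ_{\text{ee}}}(F)H(F)\,d\varepsilon=0$ iff $\int_{\mathbb{R}^d}\colQ_{\text{ee}}(F)H(F)\,dk=0$, and since $F=F(\varepsilon(k))$ the equivalence in Proposition~\ref{prop: qee}(2) applies verbatim to give $F=M(\varepsilon)$.

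Finally I must close the loop with the condition $\overline{\colQ_{\text{ee}}}(F)=0$. One direction is immediate: if $F=M(\varepsilon)$ then $\colQ_{\text{ee}}(M)=0$ pointwise in $k$ by Proposition~\ref{prop: qee}(2), hence $\overline{\colQ_{\text{ee}}}(M)(\varepsilon)=\int_{S_\varepsilon}0\,dN_\varepsilon=0$. For the converse, $\overline{\colQ_{\text{ee}}}(F)=0$ trivially forces $\int_R\overline{\colQ_{\text{ee}}}(F)H(F)\,d\varepsilon=0$, and the previous equivalence then returns $F=M(\varepsilon)$; chaining these yields the stated three-way equivalence. The only point requiring care---and the closest thing to an obstacle---is the repeated interchange of the $\varepsilon$-weight and $H(F)$ with the coarea measure, which is legitimate precisely because these factors are level-set constants; the rest is a direct inheritance from the already-established $k$-space proposition.
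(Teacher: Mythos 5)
Your proof is correct. Note, however, that the paper does not actually prove this proposition: it is imported verbatim from \cite{AD96}, with no argument supplied, so there is no in-paper proof to compare against. Your derivation---collapsing $\int_R\bigl(\int_{S_\varepsilon}\cdot\,dN_\varepsilon\bigr)d\varepsilon$ to $\int_{\mathbb{R}^d}\cdot\,dk$ via the coarea formula (\ref{coarea}), after pulling the level-set-constant factors $\varepsilon$ and $H(F)$ inside the surface integral, and then invoking Proposition \ref{prop: qee}---is the natural and complete way to obtain the result from what the paper does state; in particular you correctly observe that the equivalence part of Proposition \ref{prop: qee}(2) is only available under the hypothesis $f=f(\varepsilon(k))$, which is satisfied here since $\overline{\colQ_{\text{ee}}}$ is only defined on such functions, and you correctly close the three-way equivalence by noting that $\overline{\colQ_{\text{ee}}}(F)=0$ trivially annihilates the entropy integral while $F=M$ kills $\colQ_{\text{ee}}(F)$ pointwise.
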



Now that the mathematical preliminaries are set up, we are ready to derive the macroscopic limit. The main result is summarized in the following theorem.

\begin{theorem}\cite{DLS04}
In equation (\ref{kinetic1}), when $\alpha\rightarrow 0$, the solution $f$ formally tends to a Fermi-Dirac distribution function (\ref{FD}), with the position and time dependent fugacity $z(x,t)$ and temperature $T(x,t)$ satisfying the so-called Energy-Transport (ET) model:
\begin{eqnarray} \label{ET3}
\partial_t \left( \begin{array}{c} \rho \\ \rho \energy \end{array} \right) 
+  \left( \begin{array}{c} \dxn\cdot j_{\rho} \\ \dxn \cdot j_{\energy}   \end{array} \right) 
-   \left(   \begin{array}{c} 0 \\  \dxn V \cdot j_{\rho} \end{array} \right) = \left( \begin{array}{c} 0 \\ W_{\text{ph}}^{\text{inel}} \end{array} \right),
\end{eqnarray}
where the density $\rho$ and energy $\energy$ are defined as
\begin{equation} \label{rhoe}
\rho(z,T)=\int_{\mathbb{R}^d}f\,dk=\int_{\mathbb{R}^d}M\,dk, \quad \energy(z,T)=\frac{1}{\rho}\int_{\mathbb{R}^d}f\varepsilon\,dk=\frac{1}{\rho}\int_{\mathbb{R}^d}M\varepsilon\,dk;
\end{equation}
the fluxes $j_{\rho}$ and $j_{\energy}$ are given by
\begin{eqnarray} \label{flux}  
 \left( \begin{array}{c} j_{\rho}(z,T) \\ j_{\energy}(z,T) \end{array} \right) = -  \left( \begin{array}{cc} \mathcal{D}_{11}  & \mathcal{D}_{12}\\ \mathcal{D}_{21} & \mathcal{D}_{22}  \end{array} \right) 
\left(   \begin{array}{c} \frac{\dxn z}{z}-\frac{\dxn V}{T} \\  \frac{\dxn T}{T^2} \end{array} \right)
\end{eqnarray}
with the diffusion matrices 
\begin{equation}
\mathcal{D}_{ij}=\int_{\mathbb{R}^d}\dkn \varepsilon \otimes \colQ^{-1}_{\text{el}}(-\dkn \varepsilon)M(1-\eta M)\varepsilon^{i+j-2}\,dk;
\end{equation}
and the energy relaxation operator $W_{\text{ph}}^{\text{inel}}$ is 
\begin{equation} \label{Wph}
W_{\text{ph}}^{\text{inel}}(z,T)= \int_{\mathbb{R}^d}\colQ_{\text{ph}}^{\text{inel}} (M)\varepsilon \,dk.
\end{equation}
\end{theorem}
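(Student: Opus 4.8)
The plan is to combine a Hilbert expansion in $\alpha$ with the moment method, following the strategy announced before the statement (so the convergence is understood in the formal sense of the theorem). I would write $f = f^{(0)} + \alpha f^{(1)} + \alpha^2 f^{(2)} + \cdots$ and abbreviate the transport operator as $\mathcal{T}f := \dkn\varepsilon\cdot\dxn f + \dxn V\cdot\dkn f$, so that (\ref{kinetic1}) reads $\dt f + \frac{1}{\alpha}\mathcal{T}f = \frac{1}{\alpha^2}\colQ_{\text{el}}(f) + \frac{1}{\alpha}\colQ_{\text{ee}}(f) + \colQ_{\text{ph}}^{\text{inel}}(f)$. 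Substituting the expansion and collecting powers of $\alpha$, the $O(\alpha^{-2})$ balance gives $\colQ_{\text{el}}(f^{(0)}) = 0$, so by part 3 of Proposition \ref{prop: qel} the leading term depends on $k$ only through the energy: $f^{(0)} = f^{(0)}(\varepsilon(k);x,t)$.

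The crux is the $O(\alpha^{-1})$ balance, $\mathcal{T}f^{(0)} = \colQ_{\text{el}}(f^{(1)}) + \colQ_{\text{ee}}(f^{(0)})$. Here the two scales interact: $\colQ_{\text{el}}$ alone forces only isotropy in energy, and it is the solvability condition for $f^{(1)}$ that selects the Fermi-Dirac profile. Since $\mathcal{R}(\colQ_{\text{el}}) = \mathcal{N}(\colQ_{\text{el}})^\perp$ (part 5 of Proposition \ref{prop: qel}), this equation is solvable in $f^{(1)}$ iff its right-hand side integrates to zero on every constant-energy surface $S_\varepsilon$. Integrating over $S_\varepsilon$ and using the definition of $\overline{\colQ_{\text{ee}}}$ gives $\int_{S_\varepsilon}\mathcal{T}f^{(0)}\,dN_\varepsilon = \overline{\colQ_{\text{ee}}}(f^{(0)})(\varepsilon)$. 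The left side vanishes: with $\dkn\varepsilon = k$ and $f^{(0)}$, $\partial_\varepsilon f^{(0)}$ constant on $S_\varepsilon$, both contributions reduce to $\int_{S_\varepsilon}k\,dN_\varepsilon = 0$ by spherical symmetry. Hence $\overline{\colQ_{\text{ee}}}(f^{(0)}) = 0$, and the H-theorem of Proposition \ref{prop: qeeb} forces $f^{(0)} = M(\varepsilon(k))$, the Fermi-Dirac distribution (\ref{FD}) with some $z(x,t),T(x,t)$. Because then $\colQ_{\text{ee}}(M) = 0$, the balance collapses to $\colQ_{\text{el}}(f^{(1)}) = \mathcal{T}M$, and inverting gives $f^{(1)} = \colQ_{\text{el}}^{-1}(\mathcal{T}M)$ up to an element of $\mathcal{N}(\colQ_{\text{el}})$ (a function of $\varepsilon$, which will not survive the odd-in-$k$ flux integrals below). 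A direct computation of the $z$-, $T$-, and $\varepsilon$-derivatives of $M$ yields the compact form $\mathcal{T}M = M(1-\eta M)\,\dkn\varepsilon\cdot\big[(\frac{\dxn z}{z} - \frac{\dxn V}{T}) + \varepsilon\frac{\dxn T}{T^2}\big]$, which already exhibits the two thermodynamic forces appearing in (\ref{flux}).

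With $f^{(0)}$ and $f^{(1)}$ in hand, I would obtain the ET system by taking the $1$ and $\varepsilon$ moments of the exact equation (\ref{kinetic1}). Testing against $1$ and $\varepsilon$ kills the collision contributions: $\int\colQ_{\text{el}}(f)g\,dk = 0$ for $g=1,\varepsilon$ by part 1 of Proposition \ref{prop: qel}, and $\int\colQ_{\text{ee}}(f)\,dk = \int\colQ_{\text{ee}}(f)\varepsilon\,dk = 0$ by part 1 of Proposition \ref{prop: qee}; the phonon term conserves mass ($\int\colQ_{\text{ph}}^{\text{inel}}(f)\,dk = 0$) so it drops from the mass equation, while in the limit $f\to M$ it yields exactly $W_{\text{ph}}^{\text{inel}}$ of (\ref{Wph}) in the energy equation. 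Integrating the transport term by parts in $k$ and using $\dkn\varepsilon = k$ turns $\frac1\alpha\int\mathcal{T}f\,(1,\varepsilon)\,dk$ into $\frac1\alpha$ times the divergences of $\int\dkn\varepsilon\,f\,dk$ and $\int\varepsilon\dkn\varepsilon\,f\,dk$, plus the field term $-\dxn V\cdot\int\dkn\varepsilon\,f\,dk$ for the energy moment. Substituting the expansion, the would-be $O(1/\alpha)$ fluxes carry the factor $\int\dkn\varepsilon\,M\,dk = \int\varepsilon\dkn\varepsilon\,M\,dk = 0$ (odd in $k$), so they cancel and the limit stays finite, the surviving fluxes being $j_{\rho} = \int\dkn\varepsilon\,f^{(1)}\,dk$ and $j_{\energy} = \int\varepsilon\dkn\varepsilon\,f^{(1)}\,dk$. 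Inserting $f^{(1)} = \colQ_{\text{el}}^{-1}(\mathcal{T}M)$ and using the commutation property $\colQ_{\text{el}}^{-1}(\psi(\varepsilon)f) = \psi(\varepsilon)\colQ_{\text{el}}^{-1}(f)$ (part 6 of Proposition \ref{prop: qel}) to pull the $\varepsilon$-weights out, the four integrals organize precisely into the matrices $\mathcal{D}_{ij}$ and reproduce (\ref{flux}), completing (\ref{ET3}).

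I expect the main obstacle to be the $O(\alpha^{-1})$ solvability step: one must recognize that the elastic operator fixes only isotropy while the electron-electron operator, entering the compatibility condition through its energy-averaged version $\overline{\colQ_{\text{ee}}}$, is what pins down the Fermi-Dirac equilibrium, and one must verify the exact cancellation $\int_{S_\varepsilon}\mathcal{T}f^{(0)}\,dN_\varepsilon = 0$ that renders this condition purely collisional. The remaining identification of the flux integrals with the $\mathcal{D}_{ij}$ is essentially bookkeeping enabled by the commutation property of $\colQ_{\text{el}}^{-1}$, though some care is needed with the derivatives of $M$ and with discarding the null-space ambiguity in $f^{(1)}$.
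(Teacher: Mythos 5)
Your proposal follows essentially the same route as the paper's proof: the Hilbert expansion with the $O(\alpha^{-2})$ and $O(\alpha^{-1})$ balances, the solvability condition on $S_\varepsilon$ forcing $\overline{\colQ_{\text{ee}}}(f^{(0)})=0$ and hence $f^{(0)}=M$ via the H-theorem, the inversion $f^{(1)}=\colQ_{\text{el}}^{-1}(\mathcal{T}M)$ with $\mathcal{T}M=M(1-\eta M)\,\dkn\varepsilon\cdot\bigl[\frac{\dxn z}{z}-\frac{\dxn V}{T}+\varepsilon\frac{\dxn T}{T^2}\bigr]$, and finally the $(1,\varepsilon)$ moments of the exact equation to produce the ET system and the matrices $\mathcal{D}_{ij}$. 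All steps are correct (you are in fact slightly more careful than the paper in noting that the $\mathcal{N}(\colQ_{\text{el}})$ ambiguity in $f^{(1)}$ is annihilated by the odd flux integrals), so there is nothing to add.
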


\begin{proof}
Inserting the Hilbert expansion $f = f_0 + \alpha f_1 + \alpha ^2 f_2+ \dots$ into equation (\ref{kinetic1}) and collecting equal powers of $\alpha$ leads to
\begin{eqnarray}
&& O(\alpha^{-2}):   \quad  \colQ_{\text{el}} (f_0) = 0,   \label{order-2}\\ 
&& O(\alpha^{-1}): \quad   \colQ_{\text{el}} (f_1) =\dkn \varepsilon \cdot \dxn f_0  + \dxn V \cdot \dkn f_0- \colQ_{\text{ee}} (f_0). \label{order-1}
\end{eqnarray}
From (\ref{order-2}) and Proposition \ref{prop: qel} (3), we know there exists some function $F$ such that
\begin{equation*} 
f_0(x,k,t) = F(x, \varepsilon(k),t).
\end{equation*} 
Plugging $f_0$ into (\ref{order-1}):
\begin{equation} \label{order-11}
\colQ_{\text{el}} (f_1) = \dkn \varepsilon \cdot \left( \dxn F  + \dxn V \partial_{\varepsilon} F  \right)- \colQ_{\text{ee}} (F).
\end{equation}
The solvability condition for $f_1$ (Proposition \ref{prop: qel} (4--5)) implies
\begin{equation*}
\int_{S_\varepsilon} \dkn \varepsilon \cdot \left( \dxn F  + \dxn V \partial_{\varepsilon} F  \right)\,dN_\varepsilon (k) =\int_{S_\varepsilon}  \colQ_{\text{ee}} (F)\,dN_\varepsilon (k) .
\end{equation*}
Clearly the left hand side of above equation is equal to zero ($\dkn \varepsilon$ is odd in $k$), so 
\begin{equation*}
\int_{S_\varepsilon}  \colQ_{\text{ee}} (F)\,dN_\varepsilon (k)= \overline{\colQ_{\text{ee}}}(F)=0.
\end{equation*}
By Proposition \ref{prop: qeeb} (2), we know that $F$ is a Fermi-Dirac distribution $M$ with position and time dependent $z(x,t)$ and $T(x,t)$.
Therefore, $\colQ_{\text{ee}} (F)$ itself is equal to zero by Proposition \ref{prop: qee} (2), and (\ref{order-11}) reduces to
\begin{equation*} 
\colQ_{\text{el}} (f_1) = \dkn \varepsilon \cdot \left( \dxn M  + \dxn V \partial_{\varepsilon} M \right).
\end{equation*}
Then using Proposition \ref{prop: qel} (5--6), we have
\begin{equation}  \label{f1}
f_1=-\colQ_{\text{el}}^{-1} (-\dkn \varepsilon)  \cdot \left( \dxn M  + \dxn V \partial_{\varepsilon} M\right).
\end{equation}
Going back to the original equation (\ref{kinetic1}), multiplying both sides by $(1,\varepsilon(k))^T$ and integrating w.r.t. $k$ gives:
\begin{eqnarray} \label{ET1}
&&\int_{\mathbb{R}^d} \left[ \partial_t f+ \frac{1}{\alpha}\left( \dkn \varepsilon \cdot \dxn f  + \dxn V \cdot \dkn f\right)\right] \left(\begin{array}{c} 1 \\ \varepsilon \end{array} \right)\,dk\nonumber\\
&&=\int_{\mathbb{R}^d} \left[  \frac{1}{\alpha^2}\colQ_{\text{el}} (f) +\frac{1}{\alpha} \colQ_{\text{ee}} (f)+\colQ_{\text{ph}}^{\text{inel}}(f)\right] \left(\begin{array}{c} 1 \\ \varepsilon \end{array} \right)\,dk.
\end{eqnarray}
Terms involving $\colQ_{\text{el}} (f)$ and $\colQ_{\text{ee}} (f)$ vanish due to Propositions \ref{prop: qel} (1) and \ref{prop: qee} (1). Recall that $\colQ_{\text{ph}}^{\text{inel}}$ is the difference between $\colQ_{\text{ph}}$ and an elastic operator, and both of them can be easily seen to conserve mass, so $\int_{\mathbb{R}^d} \colQ_{\text{ph}}^{\text{inel}} (f)\,dk=0$. Thus (\ref{ET1}) simplifies to
\begin{eqnarray} \label{ET2}
\quad  \quad \partial_t \left( \begin{array}{c} \rho \\ \rho \energy \end{array}\right) +\int_{\mathbb{R}^d}  \frac{1}{\alpha}\left( \dkn \varepsilon \cdot \dxn f  + \dxn V \cdot \dkn f\right) \left(\begin{array}{c} 1 \\ \varepsilon \end{array} \right)\,dk=\left(\begin{array}{c} 0\\ W_{\text{ph}}^{\text{inel}}(f)\end{array}\right),
\end{eqnarray}
where $W_{\text{ph}}^{\text{inel}}(f)= \int_{\mathbb{R}^d}\colQ_{\text{ph}}^{\text{inel}} (f)\varepsilon\,dk$.

From the previous discussion, we know $f=f_0+\alpha f_1+\dots$ with $f_0$ being the Fermi-Dirac distribution (\ref{FD}), and $f_1$ given by (\ref{f1}). Plugging $f$ into (\ref{ET2}), to the leading order we have ($O(\alpha^{-1})$ term drops out since $f_0$ is an even function in $k$):
\begin{eqnarray} \label{ET21}
\quad \quad \partial_t \left( \begin{array}{c} \rho \\ \rho \energy \end{array}\right) +\int_{\mathbb{R}^d}  \left( \dkn \varepsilon \cdot \dxn f_1 + \dxn V \cdot \dkn f_1\right)\left(\begin{array}{c} 1 \\ \varepsilon \end{array} \right)\,dk=\left(\begin{array}{c} 0\\ W_{\text{ph}}^{\text{inel}}(M)\end{array}\right).
\end{eqnarray}
Utilizing the special form of $M$, one can rewrite $f_1$ as
\begin{equation*}
f_1=-\colQ_{\text{el}}^{-1} (-\dkn \varepsilon)  \cdot \left( \frac{\dxn z}{z}-\frac{\dxn V}{T}+\varepsilon \frac{\dxn T}{T^2}\right)M(1-\eta M).
\end{equation*}
Then a simple manipulation of (\ref{ET21}) yields the ET system (\ref{ET3}).
\end{proof}

The ET model (\ref{ET3}) is widely used in practical and industrial applications (see \cite{Jungel10} for a review and references therein). It can also be derived from the Boltzmann equation through the so-called SHE (Spherical Harmonics Expansion) model \cite{GVBO93}. In either case, the rigorous theory behind the formal limit is still an open question.

\begin{remark}
If the electron-electron interaction $\colQ_{\text{ee}}$ is assumed as one of the dominant terms in (\ref{kinetic1}), i.e., the same order as elastic collision $\colQ_{\text{el}}$ (which could be the physically relevant situation for very dense electrons), one can still derive the ET model (\ref{ET3}) via a similar procedure \cite{ADG96}, but the diffusion coefficients $\mathcal{D}_{ij}$ are different. A rigorous result is available in this case \cite{ADG00}.
\end{remark}

\begin{remark}
If we consider even longer time scale such that the energy relaxation term $W_{\text{ph}}^{\text{inel}}$ equilibrates $T$ to the lattice temperature $T_L$, and further assume that $M$ is the classical Maxwellian, then (\ref{ET3}) reduces to a single equation
\begin{equation*} 
\partial_t  \rho+\dxn \cdot \left [ -\mathcal{D}_{11}\left(\frac{\dxn \rho}{\rho}-\frac{\dxn V}{T_L}\right)\right]=0.
\end{equation*}
This is the classical drift-diffusion model \cite{Roosebroeck50, Poupaud91}.
\end{remark}

\begin{remark}
Unlike the classical statistics, given macroscopic variables $\rho$ and $\energy$ (\ref{rhoe}), finding the corresponding Fermi-Dirac distribution (\ref{FD}) is not a trivial issue. Under the parabolic approximation (\ref{para}), $\rho$ and $\energy$ are related to $z$ and $T$ via \cite{HJ}
\begin{eqnarray} \label{system}
\left\{\begin{array} {l}  \displaystyle \rho=\frac{(2\pi T)^{\frac{d}{2}}}{\eta}\mathcal{F}_{\frac{d}{2}}(z), \\  \displaystyle \mathcal{E}=\frac{d}{2}T\frac{\mathcal{F}_{\frac{d}{2}+1}(z)}{\mathcal{F}_{\frac{d}{2}}(z)},
\end{array} \right.
\end{eqnarray}
where $\mathcal{F}_{\nu}(z)$ is the Fermi-Dirac function of order $\nu$
\begin{equation} \label{fermi}
\mathcal{F}_{\nu}(z)=\frac{1}{\Gamma (\nu) }\int _0^\infty  \frac{x^{\nu-1}}{z^{-1}e^x+1}\,dx,  \quad 0<z<\infty,
\end{equation}
and $ \displaystyle \Gamma(\nu)$ is the Gamma function. For small $z$ ($0<z<1$), the integrand in (\ref{fermi}) can be expanded in powers of $z$:
\begin{equation*} 
\mathcal{F}_{\nu}(z)=\displaystyle \sum_{n=1}^{\infty}(-1)^{n+1}\frac{z^n}{n^{\nu}}=z-\frac{z^2}{2^{\nu}}+\frac{z^3}{3^{\nu}}-\dots. 
\end{equation*}
Thus, when $z\ll1$, $\mathcal{F}_{\nu}(z)$ behaves like $z$ itself and one recovers the classical limit. 
\end{remark}


\section{Asymptotic-preserving (AP) schemes for the semiconductor Boltzmann equation}

Equation (\ref{kinetic1}) contains three different scales. To overcome the stiffness induced by the $O(\alpha^{-2})$ and $O(\alpha^{-1})$ terms, a fully implicit scheme would be desirable. However, neither the collision operators nor the convection terms are easy to solve implicitly. Our goal is to design an appropriate numerical scheme that is uniformly stable in both kinetic and diffusive regimes, i.e., works for all values of $\alpha$ ranging from $\alpha\sim O(1)$ to $\alpha \ll 1$, while the implicit terms can be treated explicitly. 

We will first consider a spatially homogeneous case with an emphasis on the collision operators, and then include the spatial dependence to treat the convection terms. To facilitate the presentation, we always make the following assumptions without further notice:
\begin{enumerate}
\item The inelastic collision operator $\colQ^{\text{inel}}_{\text{ph}}$ in (\ref{kinetic1}) is assumed to be zero, since it is the weakest effect and its appearance won't bring extra difficulties to numerical schemes.
\item The scattering matrices $\Phi_{\text{el}}$ and $\Phi_{\text{ee}}$ are {\it rotationally invariant}:
\begin{equation*}
\Phi_{\text{el}}(k,k')=\Phi_{\text{el}}(|k|,|k'|), \quad \Phi_{\text{ee}}(k,k_1,k',k_1')=\Phi_{\text{ee}}(|k|,|k_1|,|k'|,|k_1'|).
\end{equation*}
\end{enumerate}
Then it is not difficult to verify that (see Proposition \ref{prop: qel} (4))
\begin{equation} \label{Qel3}
\colQ_{\text{el}}(f)(k)=\lambda_{\text{el}}(\varepsilon)([f](\varepsilon)-f(k)),
\end{equation}
where 
\begin{equation*}
\lambda_{\text{el}}(\varepsilon(k)):= \int_{\mathbb{R}^d} \Phi_{\text{el}} (k, k')\delta (\varepsilon'  - \varepsilon) \,dk'=\Phi_{\text{el}}(|k|,|k|)N(\varepsilon),
\end{equation*}
and $[f](\varepsilon(k))$ is the mean value of $f$ over sphere $S_{\varepsilon}$:
\begin{equation*} 
[f](\varepsilon(k)):=\frac{1}{N(\varepsilon)}\int_{S_{\varepsilon}}f(k)\,dN_{\varepsilon}(k).
\end{equation*}
In particular, for any odd function $f(k)$,
\begin{equation*}
\colQ_{\text{el}}(f)(k)=-\lambda_{\text{el}}(\varepsilon)f(k), \quad \colQ_{\text{el}}^{-1}(f(k))=-\frac{1}{\lambda_{\text{el}}(\varepsilon)} f(k).
\end{equation*}
This observation is crucial in designing our AP schemes.


\subsection{The spatially homogeneous case}

In the spatially homogeneous case, equation (\ref{kinetic1}) reduces to
\begin{equation} \label{homo}
\partial_t f  = \frac{1}{\alpha^2} \colQ_{\text{el}}(f) +  \frac{1}{\alpha} \colQ_{\text{ee}}(f),
\end{equation}
where $f$ only depends on $k$ and $t$. An explicit discretization of (\ref{homo}), e.g., the forward Euler scheme, suffers from severe stability constraints: $\Delta t$ has to be smaller than $O(\alpha^2)$. Implicit schemes do not have such a restriction, but require some sort of iteration solvers for $\colQ_{\text{el}}$ and $\colQ_{\text{ee}}$ which can be quite complicated. 

To tackle these two stiff terms, we adopt the penalization idea in \cite{FJ10}, i.e., penalize both $\colQ_{\text{el}}$ and $\colQ_{\text{ee}}$ by their corresponding ``BGK" operators:
\begin{equation} \label{penalization}
\partial_tf = \frac{ \colQ_{\text{el}}(f)-\beta_{\text{el}}(M_{\text{el}}-f) }{\alpha^2}+\frac{\beta_{\text{el}}(M_{\text{el}}-f) }{\alpha^2}+  \frac{\colQ_{\text{ee}}(f)-\beta_{\text{ee}}(M_{\text{ee}}-f)}{\alpha} +\frac{\beta_{\text{ee}}(M_{\text{ee}}-f) }{\alpha}.
\end{equation}
Using the properties of $\colQ_{\text{ee}}$ and $\colQ_{\text{el}}$ from the last section, $M_{\text{ee}}$ can be naturally chosen as the Fermi-Dirac distribution $M$ (in the homogeneous case $\rho$ and $\energy$ are conserved, so $M$ is an absolute Maxwellian and can be obtained from the initial condition), whereas $M_{\text{el}}$ can in principle be any function of $\varepsilon$ such that $M_{\text{el}}$ and $f$ share the same density and energy (the choice of $M_\text{el}$ is not essential as we shall see, and we will get back to this when we consider the spatially inhomogeneous case). As the goal of penalization is to make the residue $\colQ_{\bullet}(f)-\beta_{\bullet}(M_{\bullet}-f)$ as small as possible so that it is non-stiff or less stiff, and $\colQ_{\text{el}}$, $\colQ_{\text{ee}}$ can be expressed symbolically as
\begin{equation*} \label{col}
\colQ_{\text{el}}(f)(k)=\colQ_{\text{el}}^+(f)(\varepsilon)-\lambda_{\text{el}}(\varepsilon)f(k); \quad \colQ_{\text{ee}}(f)(k)=\colQ_{\text{ee}}^+(f)(k)-\colQ_{\text{ee}}^-(f)(k)f(k),
\end{equation*}
we hence choose 
\begin{equation} \label{coef}
\beta_{\text{el}}\approx\max_{\varepsilon}\lambda_{\text{el}}(\varepsilon);  \quad \beta_{\text{ee}}\approx\max_{k} \colQ_{\text{ee}}^-(f)(k).
\end{equation}
Other choices are also possible \cite{FJ10}. Generally speaking, we only need the coefficient to be a rough estimate of the Frechet derivative of the collision operator around equilibrium. 

Therefore, a first-order IMEX scheme for (\ref{penalization}) is written as 
\begin{eqnarray} \label{homoscheme}
\frac{f^{n+1}-f^n}{\Delta t} = \ &&\frac{ \colQ_{\text{el}}(f^n)-\beta_{\text{el}}(M_{\text{el}}-f^n) }{\alpha^2}+\frac{\beta_{\text{el}}(M_{\text{el}}-f^{n+1}) }{\alpha^2} \nonumber 
\\&& +  \frac{\colQ_{\text{ee}}(f^n)-\beta_{\text{ee}}(M-f^n)}{\alpha} +\frac{\beta_{\text{ee}}(M-f^{n+1}) }{\alpha}.
\end{eqnarray}


\subsubsection{Asymptotic properties of the numerical solution}

To better understand the asymptotic behavior of the numerical solution, in this subsection we assume that $\colQ_{\text{ee}}(f)=M-f$. Then scheme (\ref{homoscheme}) becomes
\begin{eqnarray*} 
\frac{f^{n+1}-f^n}{\Delta t} = \ && \frac{ \colQ_{\text{el}}(f^n)-\beta_{\text{el}}(M_{\text{el}}-f^n) }{\alpha^2}  +\frac{\beta_{\text{el}}(M_{\text{el}}-f^{n+1}) }{\alpha^2} \nonumber
\\&&  +  \frac{(1-\beta_{\text{ee}})(M-f^n)}{\alpha} +\frac{\beta_{\text{ee}}(M-f^{n+1}) }{\alpha}.
\end{eqnarray*}
This is equivalent to
\begin{eqnarray*} \label{init}
f^{n+1} =&&\ \frac{1+\frac{\Delta t}{\alpha^2}(\beta_{\text{el}}-\lambda_{\text{el}})+\frac{\Delta t}{\alpha}(\beta_{\text{ee}}-1)}{1+\frac{\Delta t}{\alpha^2}\beta_{\text{el}}+\frac{\Delta t}{\alpha}\beta_{\text{ee}}}f^n+\frac{\frac{\Delta t}{\alpha^2}}{1+\frac{\Delta t}{\alpha^2}\beta_{\text{el}}+\frac{\Delta t}{\alpha}\beta_{\text{ee}}}\colQ_{\text{el}}^+(f^n)\nonumber
\\&& +\frac{\frac{\Delta t}{\alpha}}{1+\frac{\Delta t}{\alpha^2}\beta_{\text{el}}+\frac{\Delta t}{\alpha}\beta_{\text{ee}}}M \nonumber\\
=&& \ \frac{1+\frac{\Delta t}{\alpha^2}(\beta_{\text{el}}-\lambda_{\text{el}})+\frac{\Delta t}{\alpha}(\beta_{\text{ee}}-1)}{1+\frac{\Delta t}{\alpha^2}\beta_{\text{el}}+\frac{\Delta t}{\alpha}\beta_{\text{ee}}}f^n+\text{ some function of }  \varepsilon.
\end{eqnarray*}
Iteratively, it gives
\begin{equation*}
f^{n}=\left(\frac{1+\frac{\Delta t}{\alpha^2}(\beta_{\text{el}}-\lambda_{\text{el}})+\frac{\Delta t}{\alpha}(\beta_{\text{ee}}-1)}{1+\frac{\Delta t}{\alpha^2}\beta_{\text{el}}+\frac{\Delta t}{\alpha}\beta_{\text{ee}}}\right)^nf^0+\text{ some function of } \varepsilon.
\end{equation*}
So when $\alpha$ is small, for any $m$, there exists some integer $N$, s.t.
\begin{equation} \label{radial}
f^n\leq O(\alpha^m)+\text{ some function of } \varepsilon, \quad \text{for } n>N,
\end{equation}
which means that $f^n$ can be arbitrarily close to the null space of $\colQ_{\text{el}}$: 
\begin{equation} \label{eqn: qel5}
\colQ_{\text{el}}(f^n)\leq O(\alpha^m),\quad \text{for } n>N.
\end{equation} 

At this stage, if we examine the distance between $f$ and $M$, we found that
\begin{equation} \label{oldAP}
f^{n+1} \!- \!M=\frac{1+\frac{\Delta t}{\alpha^2}\beta_{\text{el}}+\frac{\Delta t}{\alpha}(\beta_{\text{ee}}-1)}{1+\frac{\Delta t}{\alpha^2}\beta_{\text{el}}+\frac{\Delta t}{\alpha}\beta_{\text{ee}}}(f^n \!-\! M)+\frac{\frac{\Delta t}{\alpha^2}}{1+\frac{\Delta t}{\alpha^2}\beta_{\text{el}}+\frac{\Delta t}{\alpha}\beta_{\text{ee}}}\colQ_{\text{el}}(f^n)
\end{equation}
so 
\begin{equation*}
|f^{n+1}-M|\leq r|f^n-M|+O(\alpha^m),\quad \text{for } n>N,
\end{equation*}
where
\begin{equation*}
r=\left|\frac{1+\frac{\Delta t}{\alpha^2}\beta_{\text{el}}+\frac{\Delta t}{\alpha}(\beta_{\text{ee}}-1)}{1+\frac{\Delta t}{\alpha^2}\beta_{\text{el}}+\frac{\Delta t}{\alpha}\beta_{\text{ee}}}\right|.
\end{equation*}
Then for proper $\beta_{\text{ee}}$, we have $0<r<1$ and
\begin{equation} \label{oldAP1}
|f^{n+n_1}-M|\leq r^{n_1}|f^n-M|+O(\alpha^m),\quad \text{for } n>N.
\end{equation}
This implies that no matter what the initial condition is, $f$ will eventually be driven to the desired Fermi-Dirac distribution, but the convergence rate can be rather slow for small $\alpha$. What even worse is, when $\alpha \rightarrow0$, $r\rightarrow1$, we see from (\ref{oldAP}), (\ref{radial}) that $f$ will stay around some function of $\varepsilon$, but nothing guarantees it is $M$! This violates the property 2 mentioned in the Introduction. 

On the other hand, we know from (\ref{eqn: qel5}) that $\colQ_{\text{el}}(f^n)$ can be arbitrarily small after some time. What if we just set it equal to zero afterwards? Dropping this term as well as its penalization leads to
\begin{equation*} 
\frac{f^{n+1}-f^n}{\Delta t} =  \frac{(1-\beta_{\text{ee}})(M-f^n)}{\alpha} +\frac{\beta_{\text{ee}}(M-f^{n+1}) }{\alpha},
\end{equation*}
which is
\begin{equation} \label{fastconv}
f^{n+1}-M=\frac{1+\frac{\Delta t}{\alpha}(\beta_{\text{ee}}-1)}{1+\frac{\Delta t}{\alpha}\beta_{\text{ee}}}(f^n-M).
\end{equation}
Hence as long as $\beta_{\text{ee}}>1/2$, $f^n$ will converge to $M$ regardless of the initial condition. Compared with (\ref{oldAP}), this one has much faster convergence rate.


\subsubsection{The thresholded AP scheme} \label{sec: threshold}

The above simple analysis illustrates that the penalization idea \cite{FJ10} should be applied wisely, especially when there are stiff terms with different scales. Back to the original equation (\ref{homo}), we propose to solve it as follows.

At time step $t^{n+1}$, check the norm of $\colQ_{\text{el}}(f^n)$ in $k$:
\begin{itemize}
\item if $\|\colQ_{\text{el}}(f^n)\|>\delta$, apply scheme (\ref{homoscheme});
\item otherwise, apply (\ref{homoscheme}) with $\colQ_{\text{el}}(f^n)=\beta_{\text{el}}=0$.
\end{itemize}
As explained previously, the threshold can be chosen based on the property we expect in (\ref{eqn: qel5}), say, $\delta = \alpha^m$, $m>2$. However, in practice, as any numerical solver of $\colQ_{\text{el}}$ has certain accuracy, we therefore set
\begin{equation} \label{thres}
\delta = (\Dk)^l,
\end{equation}
where $l$ denotes the error order of the numerical solver for $\colQ_{\text{el}}$.
\begin{remark}
The choice of threshold (\ref{thres}) does not violate the consistency of our method, since when $\Dk \rightarrow 0$, we have $\delta \rightarrow 0$, and we are back to the original scheme (\ref{homoscheme}) whose consistency to (\ref{homo}) is not a problem (i.e. the scheme satisfies the property 1 in the Introduction).
\end{remark}


\subsection{The spatially inhomogeneous case}

We now include the spatial dependence to treat the full problem (\ref{kinetic1}). To handle the newly added stiff convection terms, we follow the idea of \cite{JP00} to form it into a set of parity equations. Denote $f^+ = f(x,k,t)$, $f^- = f(x,-k,t)$, then they solve
\begin{eqnarray*}
\partial_t f^+  +  \frac{1}{\alpha } \left(  \dkn \varepsilon  \cdot  \dxn f^+  +  \dxn V  \cdot  \dkn f^+  \right)  =\ &&  \frac{1 }{\alpha^2}\colQ_{\text{el}}(f^+)  + \frac{1}{\alpha}\colQ_{\text{ee}}(f^+) ,\\ 
 \partial_t f^-  -  \frac{1}{\alpha } \left(  \dkn \varepsilon  \cdot  \dxn f^- +  \dxn V  \cdot  \dkn  f^-  \right)  = \ &&\frac{1}{\alpha^2}\colQ_{\text{el}}(f^-)  +  \frac{1 }{\alpha}\colQ_{\text{ee}}(  f^-).
\end{eqnarray*}
Now write
\begin{equation*}
r(x,k,t) = \half (f^+ + f^-),  \quad j (x,k,t) = \frac{1}{2\alpha} (f^+ - f^-),
\end{equation*}
we have
\begin{eqnarray}
&&\partial_t r +  \dkn \varepsilon  \cdot  \dxn j  +  \dxn V  \cdot  \dkn j = \frac{\colQ_{\text{el}}(r)}{\alpha^2} + \frac{\colQ_{\text{ee}}(f^+ )+ \colQ_{\text{ee}}(f^-) }{2\alpha},  \label{parityr}\\ 
&& \partial_t j + \frac{1}{\alpha^2} \left( \dkn \varepsilon  \cdot  \dxn r  +  \dxn V  \cdot  \dkn r\right) =  -\frac{\lambda_{\text{el}}}{\alpha^2} j + \frac{ \colQ_{\text{ee}}(f^+)  -  \colQ_{\text{ee}}(f^-)}{2\alpha^2},  \label{parityj}
\end{eqnarray}
where we used the fact that $j$ is an odd function in $k$, thus $\colQ_{\text{el}}(j)=-\lambda_{\text{el}}j$.

For (\ref{parityr}--\ref{parityj}), the same penalization as in the homogenous case suggests
\begin{eqnarray*}
&&\partial_t r +  \dkn \varepsilon  \cdot  \dxn j  +  \dxn V  \cdot  \dkn j = \frac{\colQ_{\text{el}}(r) - \beta_{\text{el}}(M_{\text{el}}-r)}{\alpha^2} + \frac{\beta_{\text{el}}(M_{\text{el}}-r)}{\alpha^2}   \nonumber \\ 
&&\hspace{4.5cm} + \frac{\colQ_{\text{ee}}(f^+ )+ \colQ_{\text{ee}}(f^-) - 2 \beta_{\text{ee}}(M-r) }{2\alpha} +\frac{\beta_{\text{ee}} (M-r)}{\alpha},\\ 
 &&\partial_t j + \frac{1}{\alpha^2} \left( \dkn \varepsilon  \cdot  \dxn r  +  \dxn V  \cdot  \dkn r\right) =  -\frac{\lambda_{\text{el}}}{\alpha^2} j  \nonumber\\
 &&\hspace{4.5cm}+ \frac{ \colQ_{\text{ee}}(f^+) -  \colQ_{\text{ee}}(f^-) + 2\beta_{\text{ee}} \alpha j  }{2\alpha^2}-\frac{\beta_{\text{ee}}}{\alpha} j.
\end{eqnarray*}
Note here $M=M(x,\varepsilon,t)$ is the local Fermi-Dirac distribution, and $M_\text{el}=M_\text{el}(x,\varepsilon,t)$ is some function of $\varepsilon$ whose form will be specified later. The coefficients $\beta_{\text{el}}$ and $\beta_{\text{ee}}$ are chosen the same as in (\ref{coef}), except that $\beta_{\text{ee}}$ can also be made space dependent. 

The above equations can be formed into a diffusive relaxation system \cite{JPT00}:
\begin{eqnarray}
&&\quad \quad \quad \partial_t r +  \dkn \varepsilon  \cdot  \dxn j  +  \dxn V  \cdot  \dkn j = G_1(r,j)  +\frac{\beta_{\text{el}}(M_{\text{el}}-r)}{\alpha^2} + \frac{\beta_{\text{ee}}(M-r)}{\alpha}, \label{dr1} \\ 
&&\quad \quad \quad \partial_t j  +  \theta   \left( \dkn \varepsilon  \cdot  \dxn r  +  \dxn V  \cdot  \dkn r\right)  =  G_2(r,j)- \frac{\beta_{\text{ee}}} {\alpha} j\nonumber
\\ &&\hspace{5cm} -\frac{1}{\alpha^2} \left[ \lambda_{\text{el}} j  +  (1 -   \alpha^2 \theta) (\dkn \varepsilon  \cdot  \dxn r   +  \dxn V  \cdot  \dkn r)\right] , \label{dr2}
\end{eqnarray}
where 
\begin{eqnarray}
&& \quad G_1(r,j) =   \frac{\colQ_{\text{el}}(r) - \beta_{\text{el}}(M_{\text{el}}-r)}{\alpha^2} + \frac{\colQ_{\text{ee}}(f^+ )+ \colQ_{\text{ee}}(f^-) - 2 \beta_{\text{ee}} (M-r) }{2\alpha} ,  \label{G1}
\\ &&\quad G_2(r,j) = \frac{ \colQ_{\text{ee}}(f^+)  -  \colQ_{\text{ee}}(f^-) + 2\beta_{\text{ee}} \alpha j  }{2\alpha^2} , \label{G2}
\end{eqnarray}
and $0 \leq \theta(\alpha) \leq 1/\alpha^2$ is a control parameter simply chosen as $\theta(\alpha) = \min \left\{ 1, 1/\alpha^2\right\}$.

A first-order IMEX scheme for the system (\ref{dr1}--\ref{dr2}) thus reads
\begin{eqnarray}
&&\frac{r^{n+1}-r^n}{\Delta t}+  \dkn \varepsilon  \cdot  \dxn j^n +  \dxn V^n  \cdot  \dkn j^n =  G_1(r^n,j^n) +\frac{\beta_{\text{el}} (M_{\text{el}}^{n+1}-r^{n+1})}{\alpha^2} \nonumber
\\ && \hspace{8.3cm} +\frac{\beta_{\text{ee}} (M^{n+1}-r^{n+1})}{\alpha}, \label{rr}\\ 
&&\frac{j^{n+1}-j^n}{\Dt} + \theta  \left( \dkn \varepsilon  \cdot  \dxn r^n  +  \dxn V^n  \cdot  \dkn r^n\right) =G_2(r^n,j^n)-\frac{\beta_{\text{ee}}}{\alpha} j^{n+1}\nonumber\\
&& \hspace{2.1cm}-\frac{1}{\alpha^2}[ \lambda_{\text{el}} j^{n+1}  +  (1 -   \alpha^2 \theta) (\dkn \varepsilon  \cdot  \dxn r^{n+1} + \dxn V^{n+1}  \cdot  \dkn r^{n+1})].\label{jj}
\end{eqnarray}


\subsubsection{Asymptotic properties of the numerical solution}

Leaving aside other issues such as the spatial discretization, let us for the moment again assume that $\colQ_{\text{ee}}(f)=M-f$, then (\ref{rr}--\ref{jj}) simplify to
\begin{eqnarray*}
&&\frac{r^{n+1}-r^n}{\Delta t} \!+\!  \dkn \varepsilon  \cdot  \dxn j^n +  \dxn V^n  \cdot  \dkn j^n =
\frac{\colQ_{\text{el}}(r^n) \!-\! \beta_{\text{el}}(M_{\text{el}}^n \!-\! r^n)}{\alpha^2} + \frac{\beta_{\text{el}} (M_{\text{el}}^{n+1} \!-\!r^{n+1})}{\alpha^2}   \nonumber \\ 
 &&\hspace{6.7cm}+ \frac{(1-\beta_{\text{ee}})(M^n \!-\! r^n)}{\alpha} +\frac{\beta_{\text{ee}} (M^{n+1} \!-\! r^{n+1})}{\alpha} ,\\ 
&&  \frac{j^{n+1}-j^n}{\Dt} +  \theta  \left( \dkn \varepsilon  \cdot  \dxn r^n  +  \dxn V^n  \cdot  \dkn r^n\right) = \frac{\beta_{\text{ee}}-1}{\alpha}j^n -\frac{\beta_{\text{ee}}}{\alpha} j^{n+1} \nonumber\\ 
&& \hspace{3.8cm}-\frac{1}{\alpha^2}[ \lambda_{\text{el}} j^{n+1}  +  (1 -   \alpha^2 \theta) (\dkn \varepsilon  \cdot  \dxn r^{n+1} + \dxn V^{n+1}  \cdot  \dkn r^{n+1})]  .
\end{eqnarray*}
The scheme for $j$ is actually
\begin{eqnarray*}
&&j^{n+1} =\frac{1+\frac{\Delta t}{\alpha}(\beta_{\text{ee}}-1)}{1+\frac{\Delta t}{\alpha^2}\lambda_{\text{el}}+\frac{\Delta t}{\alpha}\beta_{\text{ee}}}j^n-\frac{ \frac{\Delta t}{\alpha^2}}{1+\frac{\Delta t}{\alpha^2}\lambda_{\text{el}}+\frac{\Delta t}{\alpha}\beta_{\text{ee}}}(\dkn \varepsilon  \cdot  \dxn r^{n+1} +  \dxn V^{n+1}  \cdot  \dkn r^{n+1})\nonumber\\
&&+\frac{\theta\Delta t}{1+\frac{\Delta t}{\alpha^2}\lambda_{\text{el}}+\frac{\Delta t}{\alpha}\beta_{\text{ee}}}\left[ \left(\dkn \varepsilon  \cdot  \dxn r^{n+1} +\dxn V^{n+1}\cdot \dkn r^{n+1} \right)- \left(\dkn \varepsilon  \cdot  \dxn r^n +\dxn V^n\cdot \dkn r^n\right) \right].
\end{eqnarray*}
When $\alpha\ll 1$ (so $\theta=1$), suppose all functions are smooth, we have
\begin{equation*}
j^{n+1} =-\frac{1}{\lambda_{\text{el}}}(\dkn \varepsilon  \cdot  \dxn r^{n+1} +  \dxn V^{n+1}  \cdot  \dkn r^{n+1})+O(\alpha).
\end{equation*}
The scheme for $r$ results in
\begin{eqnarray*}
r^{n+1}=\ && \frac{1+\frac{\Delta t}{\alpha^2}(\beta_{\text{el}} \!\!-\!\! \lambda_{\text{el}})+\frac{\Delta t}{\alpha}(\beta_{\text{ee}} \!\!-\!1)}{1+\frac{\Delta t}{\alpha^2}\beta_{\text{el}}+\frac{\Delta t}{\alpha}\beta_{\text{ee}}}r^n-\frac{\Delta t}{1\!+\! \frac{\Delta t}{\alpha^2}\beta_{\text{el}}\!+\! \frac{\Delta t}{\alpha}\beta_{\text{ee}}}(\dkn \varepsilon  \!\cdot\!  \dxn j^n \!+\!  \dxn V^n \! \cdot \! \dkn j^n)\nonumber\\
&&+\frac{\frac{\Delta t}{\alpha^2}}{1+\frac{\Delta t}{\alpha^2}\beta_{\text{el}}+\frac{\Delta t}{\alpha}\beta_{\text{ee}}}\colQ_{\text{el}}^+(r^n)
+\frac{\frac{\Delta t}{\alpha}\beta_{\text{ee}}}{1+\frac{\Delta t}{\alpha^2}\beta_{\text{el}}+\frac{\Delta t}{\alpha}\beta_{\text{ee}}}(M^{n+1}-M^n)\nonumber
\\
&&+ \frac{\frac{\Delta t}{\alpha}}{1+\frac{\Delta t}{\alpha^2}\beta_{\text{el}}+\frac{\Delta t}{\alpha}\beta_{\text{ee}}}M^n+ \frac{\frac{\Delta t}{\alpha^2}\beta_{\text{el}}}{1+\frac{\Delta t}{\alpha^2}\beta_{\text{el}}+\frac{\Delta t}{\alpha}\beta_{\text{ee}}}(M_{\text{el}}^{n+1}-M_{\text{el}}^n)
  \nonumber\\
=\ && \frac{1+\frac{\Delta t}{\alpha^2}(\beta_{\text{el}}-\lambda_{\text{el}})+\frac{\Delta t}{\alpha}(\beta_{\text{ee}}-1)}{1+\frac{\Delta t}{\alpha^2}\beta_{\text{el}}+\frac{\Delta t}{\alpha}\beta_{\text{ee}}}r^n+O(\alpha^2)+\text{ some function of }  \varepsilon.
\end{eqnarray*}
Iteratively, this gives
\begin{equation*}
r^n=\left(\frac{1+\frac{\Delta t}{\alpha^2}(\beta_{\text{el}}-\lambda_{\text{el}})+\frac{\Delta t}{\alpha}(\beta_{\text{ee}}-1)}{1+\frac{\Delta t}{\alpha^2}\beta_{\text{el}}+\frac{\Delta t}{\alpha}\beta_{\text{ee}}}\right)^nr^0+O(\alpha^2)+\text{ some function of }  \varepsilon.
\end{equation*}
Clearly the first term on the right hand side will be damped down as time goes by. After several steps, we have
\begin{equation*}
r^n=O(\alpha^2)+\text{ some function of } \varepsilon, \quad \text{for } n>N,
\end{equation*}
which implies
\begin{equation*}
\colQ_{\text{el}}(r^n)=O(\alpha^2),  \quad \text{for } n>N.
\end{equation*} 

At this stage, if we continue to penalize $\colQ_{\text{el}}(r^n)$, similarly as before,
\begin{eqnarray} \label{eqn: ap 1}
r^{n+1}-M^{n+1}=\ && \frac{1+\frac{\Delta t}{\alpha^2}\beta_{\text{el}}+\frac{\Delta t}{\alpha}(\beta_{\text{ee}}-1)}{1+\frac{\Delta t}{\alpha^2}\beta_{\text{el}}+\frac{\Delta t}{\alpha}\beta_{\text{ee}}}(r^n-M^n)+\frac{\frac{\Delta t}{\alpha^2}}{1+\frac{\Delta t}{\alpha^2}\beta_{\text{el}}+\frac{\Delta t}{\alpha}\beta_{\text{ee}}}\colQ_{\text{el}}(r^n)\nonumber \\
&&-\frac{\Dt}{1+\frac{\Delta t}{\alpha^2}\beta_{\text{el}}+\frac{\Delta t}{\alpha}\beta_{\text{ee}}}\left[(\dkn \varepsilon  \cdot  \dxn j^n +  \dxn V^n  \cdot  \dkn j^n)+\frac{M^{n+1}-M^n}{\Dt}\right]\nonumber\\
&&-\frac{\frac{\Dt^2}{\alpha^2}\beta_{\text{el}}}{1+\frac{\Delta t}{\alpha^2}\beta_{\text{el}}+\frac{\Delta t}{\alpha}\beta_{\text{ee}}}\left(\frac{M^{n+1}-M^n}{\Dt}-\frac{M_{\text{el}}^{n+1}-M_{\text{el}}^n}{\Dt}\right) \nonumber\\
=\ && \frac{1+\frac{\Delta t}{\alpha^2}\beta_{\text{el}}+\frac{\Delta t}{\alpha}(\beta_{\text{ee}}-1)}{1+\frac{\Delta t}{\alpha^2}\beta_{\text{el}}+\frac{\Delta t}{\alpha}\beta_{\text{ee}}}(r^n-M^n)+O(\alpha^2 + \Dt),  \quad \text{for } n>N.
\end{eqnarray}
From (\ref{eqn: ap 1}) we see that $r$ will converge to $M$ with a dominant $O(\Dt)$ error. Therefore, a good choice of $\Mel$ is
\begin{equation*}
\Mel = M,
\end{equation*}
which will not only simplify the scheme, but also improve the asymptotic error from $O(\Dt)$ to $O(\alpha^2)$. 

Moreover, like the spatially homogeneous case, we suffer from the same problem that the convergence rate is too slow for small but finite $\alpha$. To accelerate the convergence, we again choose some threshold $\delta$ as in (\ref{thres}) such that once $\|\Qel(r^n)\|(x)<\delta$ (check it for every $x$), we set $\Qel(r^n)=0$ and turn off its penalization. Then (\ref{eqn: ap 1}) becomes
\begin{eqnarray*}
r^{n+1}-M^{n+1}=\ && \frac{1+\frac{\Delta t}{\alpha}(\beta_{\text{ee}}-1)}{1+\frac{\Delta t}{\alpha}\beta_{\text{ee}}}(r^n-M^n)\nonumber\\
&&-\frac{\Dt}{1+\frac{\Delta t}{\alpha}\beta_{\text{ee}}}\left[(\dkn \varepsilon  \cdot  \dxn j^n +  \dxn V^n  \cdot  \dkn j^n)+\frac{M^{n+1}-M^n}{\Dt}\right]
\nonumber\\ = \ && \frac{1+\frac{\Delta t}{\alpha}(\beta_{\text{ee}}-1)}{1+\frac{\Delta t}{\alpha}\beta_{\text{ee}}}(r^n-M^n)+O(\alpha),
\end{eqnarray*}
and we gain much faster convergence rate.


\subsubsection{The thresholded semi-discrete AP scheme} \label{sec: SAP}

Based on the discussion above, we integrate the thresholding idea into (\ref{rr}--\ref{jj}) to propose the following semi-discrete AP scheme.

At time step $t^{n+1}$, given $f^n=(f^+)^n$, $(f^-)^n$, $r^n$, $j^n$, $\rho^n$, $\energy^n$, and $V^n$:
\begin{itemize}
\item {\it Step 1}: Compute $M^{n+1}$ used in (\ref{rr}) and $V^{n+1}$ in (\ref{jj}).

Although (\ref{rr}) appears implicit (recall $\Mel=M$), $M^{n+1}$ can be computed explicitly similarly as in \cite{FJ10}. Specifically, we multiply both sides of (\ref{rr}) by $(1,\varepsilon(k))^T$ and integrate w.r.t. $k$. Utilizing the conservation properties of $\colQ_{\text{el}}$, $\colQ_{\text{ee}}$, and the BGK operator, we get
\begin{eqnarray*} 
\int_{\mathbb{R}^d} \!\!r^{n+1} \!\! \left(\begin{array}{c} 1\\ \varepsilon \end{array} \right)\,\!\!dk=\int_{\mathbb{R}^d}\!\! r^n \!\!\left(\begin{array}{c} 1\\ \varepsilon \end{array} \right)\,\!\!dk \!-\! \Delta t \int_{\mathbb{R}^d}\!\!(\dkn \varepsilon  \! \cdot \! \dxn j^n \!+\!  \dxn V^n  \! \cdot \! \dkn j^n) \left(\begin{array}{c} 1 \\ \varepsilon \end{array} \right)\,\!\! dk.
\end{eqnarray*}
Note that by definition
\begin{eqnarray*} 
\int_{\mathbb{R}^d}r \left(\begin{array}{c} 1 \\ \varepsilon \end{array} \right)\,dk=\int_{\mathbb{R}^d}f \left(\begin{array}{c} 1 \\ \varepsilon \end{array} \right)\,dk=\left(\begin{array}{c}\rho \\ \rho\energy \end{array} \right),
\end{eqnarray*}
so the preceding scheme just gives an evolution of the macroscopic variables 
\begin{equation*} 
 \left(\begin{array}{c} \rho^{n+1}\\ \rho^{n+1}\energy^{n+1} \end{array} \right)= \left(\begin{array}{c} \rho^n\\ \rho^n\energy^n \end{array} \right)-\Delta t \int_{\mathbb{R}^d}(\dkn \varepsilon  \cdot  \dxn j^n +  \dxn V^n  \cdot  \dkn j^n) \left(\begin{array}{c} 1 \\ \varepsilon \end{array} \right)\,\! dk.
\end{equation*}
Once $\rho^{n+1}$ and $\energy^{n+1}$ are computed, we can invert the system (\ref{system}) to get $z^{n+1}$ and $T^{n+1}$ (details see \cite{HJ}). Plugging them into (\ref{FD}) then defines $M^{n+1}$. Given $\rho^{n+1}$, $V^{n+1}$ can be easily obtained by solving the Poisson equation (\ref{eqn: Poisson}).
\item {\it Step 2}: Compute $r^{n+1}$.

At every spatial point $x$, check the norm of $\colQ_{\text{el}}(r^n)$ in $k$: 
\begin{itemize}
\item if $\|\colQ_{\text{el}}(r^n)\|(x)>\delta$, apply scheme (\ref{rr});
\item otherwise, apply (\ref{rr}) with $\colQ_{\text{el}}(r^n)(x)=\beta_{\text{el}}=0$.
\end{itemize}
\item {\it Step 3}: Employ scheme (\ref{jj}) to get $j^{n+1}$.
\item {\it Step 4}: Reconstruct $f^{n+1}=(f^+)^{n+1}=r^{n+1}+\alpha j^{n+1}$ and $(f^-)^{n+1}=r^{n+1}-\alpha j^{n+1}$.
\end{itemize}


\subsubsection{Space discretization}

We finally include the spatial discretization to the previous semi-discrete scheme to construct a fully-discrete scheme. We will show that when $\alpha\rightarrow 0$, it automatically becomes a discretization for the limiting ET model (\ref{ET3}), thus is Asymptotic Preserving (satisfies the property 2 in the Introduction). 

For the sake of brevity, we will present the method in a splitting framework, namely, separating the explicit and implicit parts in (\ref{rr}--\ref{jj}). This is equivalent to an unsplit version since our scheme is of an IMEX type. We also assume a slab geometry: $x\in\Omega \subset \mathbb{R}^1$. Extension to higher dimensions is straightforward. 

Let $r^{n}_{l,m}$, $j_{l,m}^n$ denote the numerical approximation of $r(x_l, k_m, t^n)$ and $j(x_l, k_m, t^n)$, where $0<l \leq N_x$, $0<m=(m_1,...,m_d)\leq N_k^d$, $N_x$ and $N_k$ are the number of points in $x$ and $k$ directions respectively. We have at time step $t^{n+1}$:

\begin{itemize}
\item {\it Step 1}: Solve the explicit part of (\ref{rr}--\ref{jj}) to get $r_{l,m}^*$ and $j_{l,m}^*$. The thresholding idea is embedded in this step when computing $G_1(r^n_{l,m},j^n_{l,m})$: if $\|\colQ_{\text{el}}(r^n)\|<\delta$ at $x_l$, set $\colQ_{\text{el}}(r^n)(x_l)=0$. 

For convection terms, we use the upwind scheme with a slope limiter \cite{Leveque}. To determine the upwind flux, one first needs to transform $r$ and $j$ into Riemann invariants. Let $u = r + \frac{1}{\sqrt{\theta}} j$, $v = r - \frac{1}{\sqrt{\theta}}j$, then (note that indices $m\pm 1$ and $m\pm \half$ below refer to the shifts in the first component of $m$)
\begin{eqnarray*}
&&\frac{u^*_{l,m} - u^n_{l,m}}{\Dt} + \vel \left[   k_{m_1}  \frac{u_{l+\half,m}^n-u_{l-\half,m}^n}{\Dx}   + (\partial_x V)^n_l  \frac{u_{l,m+\half}^n - u_{l,m-\half}^n}{\Dk}   \right] \nonumber
 \\&& \hspace{5cm} = G_1(r_{l,m}^n, j_{l,m}^n) + \frac{1}{\sqrt{\theta}} G_2(r_{l,m}^n, j_{l,m}^n), 
\\ &&  \frac{v^*_{l,m}-v^n_{l,m}}{\Dt}- \vel \left[   k_{m_1} \frac{v_{l+\half,m}^n-v_{l-\half,m}^n}{\Dx}  + (\partial_x V)^n_l \frac{v_{l,m+\half}^n - v_{l,m-\half}^n}{\Dk}   \right] 
\nonumber 
\\ && \hspace{5cm} = G_1(r_{l,m}^n, j_{l,m}^n) - \frac{1}{\sqrt{\theta}} G_2(r_{l,m}^n, j_{l,m}^n),   
\end{eqnarray*}
where $\partial_x V$ is discretized by a central difference:
\begin{equation*}
(\partial_xV)^n_l:=\frac{V^n_{l+1}-V^n_{l-1}}{2\Dx}.
\end{equation*}
The fluxes are defined as (superscript $n$ are neglected)
\begin{eqnarray*}
&&u_{l+\half,m} = \left\{ \begin{array}{ll}  u_{l,m} + \half \phi\left(\frac{u_{l,m}-u_{l-1,m}}{u_{l+1,m}-u_{l,m}}\right) (u_{l+1,m} - u_{l,m}), &   k_{m_1}>0,  
                                                            \\ u_{l+1,m} - \half \phi \left(\frac{u_{l+2,m}-u_{l+1,m}}{u_{l+1,m}-u_{l,m}}\right) (u_{l+1,m} - u_{l,m}), &   k_{m_1}<0,
                               \end{array} \right.      \\
&& u_{l,m+\half} = \left\{ \begin{array}{ll}  u_{l,m} +\half \phi\left(\frac{u_{l,m}-u_{l,m-1}}{u_{l,m+1}-u_{l,m}}\right) (u_{l,m+1} - u_{l,m}), &  (\partial_x V)_l>0,  
                                                            \\ u_{l,m+1} -  \half \phi \left(\frac{u_{l,m+2}-u_{l,m+1}}{u_{l,m+1}-u_{l,m}}\right) (u_{l,m+1} - u_{l,m}), &   (\partial_x V)_l<0,
   \end{array} \right.    
\end{eqnarray*}
here $\phi(\sigma)$ is a slope limiter function, e.g., the minmod limiter is given by $\phi(\sigma)=\max(0,\min(1,\sigma))$. Fluxes $v_{l+\half,m}$, $v_{l,m+\half}$ are defined similarly. Upon obtaining $u_{l,m}^*$ and $v_{l,m}^*$, $r_{l,m}^*$ and $j_{l,m}^*$ are recovered by
\begin{equation*}
r_{l,m}^* =\half(u_{l,m}^* + v_{l,m}^*), \quad j_{l,m}^* =\frac{\sqrt{\theta}}{2}(u_{l,m}^* - v_{l,m}^*).
\end{equation*}

\item {\it Step 2}: Solve for the macroscopic quantities $\rho^*_l$ and $\energy^*_l$ and thus define $M^*_{l,m}$ and $V^*_{l,m}$.
\begin{equation*}
\rho_l^*=\sum_m r^*_{l,m}\Delta k^d, \quad \energy_l^*=\frac{1}{2}\sum_m r^*_{l,m}|k_m|^2 \Delta k^d/\rho_l^*.
\end{equation*}
Finding $M^*_{l,m}$ is then exactly the same as in the semi-discrete scheme. $V^*_{l,m}$ is solved from a simple finite-difference discretization of the Poisson equation.

\item {\it Step 3}: Solve the implicit part of (\ref{rr}--\ref{jj}) to get $r^{n+1}_{l,m}$ and $j^{n+1}_{l,m}$ (if the threshold is satisfied in {\it Step 1}, set $\beta_{\text{el}}=0$ in $r$'s equation as well):
\begin{eqnarray}
\quad \frac{r^{n+1}_{l,m}-r^*_{l,m}}{\Delta t}=\ &&  \frac{\beta_{\text{el}} (M^{n+1}_{l,m}-r^{n+1}_{l,m})}{\alpha^2}+\frac{\beta_{\text{ee}} (M^{n+1}_{l,m}-r^{n+1}_{l,m})}{\alpha}, \label{rrimp}\\ 
\quad \frac{j^{n+1}_{l,m}-j^*_{l,m}}{\Dt}=\ &&-\frac{1}{\alpha^2}\left[ \lambda_{\text{el}} j^{n+1}_{l,m}  +  (1 -  \alpha^2 \theta) \left(k_{m_1}   \frac{r^{n+1}_{l+1,m}-r^{n+1}_{l-1,m}}{2\Delta x} \right. \right. \nonumber\\
&& \left.\left. + (\partial_x V)^{n+1}_l \frac{r^{n+1}_{l,m+1}-r^{n+1}_{l,m-1}}{2\Delta k}\right)\right] - \frac{\beta_{\text{ee}}}{\alpha} j^{n+1}_{l,m}.\label{jjimp}
\end{eqnarray}
First, it is easy to see that macroscopic quantities $ \rho^*_l$ and $\energy^*_l$ remain unchanged during this step (the right hand side of (\ref{rrimp}) is conservative). Therefore, the previously obtained $M^*_{l,m}$ and $V^*_{l,m}$ are in fact $M^{n+1}_{l,m}$ and $V^{n+1}_{l,m}$. From (\ref{rrimp}) one can easily obtain $r^{n+1}_{l,m}$, and then (\ref{jjimp}) directly gives rise to $j^{n+1}_{l,m}$.
\end{itemize}


\subsubsection{Asymptotic properties of the fully discrete scheme}

As already shown in Section 3.2.1, sending $\alpha$ to zero in (\ref{rr}--\ref{jj}) for $\colQ_{\text{ee}}=M-f$ leads to 
\begin{eqnarray*}
&& r^{n+1} = M^{n+1},
\\&& j^{n+1}= -\frac{1}{\lambda_{\text{el}}} \left(  \dkn \varepsilon \cdot  \dxn  r^{n+1} + \dxn V^{n+1} \cdot \dkn r^{n+1} \right),
\end{eqnarray*}
which in 1-D fully discrete form read
\begin{eqnarray*}
&&r_{l,m}^{n+1} = M_{l,m}^{n+1},\\
&&j_{l,m}^{n+1} = -\frac{1}{\lambda_{\text{el}}} \left(  k_{m_1}  \frac{ r_{l+1,m}^{n+1} -r_{l-1,m}^{n+1} }{2\Dx} + (\partial_x V)_l^{n+1}  \frac{ r^{n+1}_{l,m+1} - r^{n+1}_{l,m-1} }{2\Dk} \right).
\end{eqnarray*}
Plugging these relations into the discrete scheme of $r$ resulted from the last subsection, we get (after multiplication by $(1, \varepsilon(k))^T$ and integration w.r.t. $k$):
\begin{eqnarray} \label{scheme: ET}
&&\frac{1}{\Delta t} \left( \begin{array}{c} \rho^{n+1} - \rho^n\\  \rho^{n+1}\energy^{n+1} - \rho^n \energy^{n}\end{array}\right)  -  \int_{\mathbb{R}^d} \frac{1}{\lambda_{\text{el}}(\varepsilon)}   \left\{ \frac{ k_{m_1}}{2\Dx}
\left[  k_{m_1} \frac{ M_{l+2,m}^n - 2M_{l,m}^n + M_{l-2,m}^n }{2\Dx} \right.\right.\nonumber
\\&& \hspace{0cm} \left. \left.  +(\partial_x V)_{l+1}^n  \frac{M_{l+1,m+1}^n - M^n_{l+1, m-1}}{2\Dk}  -  (\partial_x V)_{l-1}^n  \frac{M_{l-1,m+1}^n - M^n_{l-1, m-1}}{2\Dk}   \right] \right.  \nonumber
\\&& \hspace{0cm} \left. -\frac{ |k_{m_1}|}{2\Dx} \left( M_{l+1,m}^n - 2M_{l,m}^n + M_{l-1,m}^n  \right)+  \frac{(\partial_x V)^n_l}{2\Dk}  \left[  (\partial_x V)_l^n  \frac{M_{l,m+2}^n - 2M_{l,m}^n + M_{l,m-2}^n }{2\Dk}  \right.\right.\nonumber
\\&&  \hspace{0cm} \left. \left. + k_{m_1+1}  \frac{M_{l+1,m+1}^n - M_{l-1,m+1}^n}{2\Dx}-  k_{m_1-1}  \frac{M_{l+1,m-1}^n - M_{l-1,m-1}^n}{2\Dx}  \right.\right] \nonumber
\\&&   \hspace{1cm} \left. - \frac{ |(\partial_x V)_l^n  |}{2\Dk}  \left( M_{l,m+1}^n - 2M_{l,m}^n + M_{l,m-1}^n  \right)    \right\} \left(\begin{array}{c} 1 \\ \varepsilon  \end{array} \right)\,  dk = 0,
\end{eqnarray}
which is a {\it kinetic} scheme \cite{Deshpande86} for the ET model (\ref{ET3}) (compare with (\ref{ET21}) and (\ref{f1})).  Here for notation simplicity, we only consider the upwind scheme, the slope limiter can be added in the same manner. 


\subsection{Spectral methods for the collision operators $\colQ_{\text{el}}$ and $\colQ_{\text{ee}}$} \label{sec: collision}

In this subsection, we briefly outline the spectral methods for computing the collision operators $\colQ_{\text{el}}$ and $\colQ_{\text{ee}}$. For numerical purpose, we assume the scattering matrices $\Phi_{\text{el}}(k,k')= \Phi_{\text{ee}}(k,k_1,k',k_1')\equiv1$, and the wave vector $k\in \mathbb{R}^2$.

\subsubsection{Computing $\colQ_{\text{ee}}$}

Under the parabolic band assumption, (\ref{Qee}) reads
\begin{eqnarray*} 
\colQ_{\text{ee}}(f)(k) =\ && \int_{\mathbb{R}^{6}} \delta (k' + k_1' -k - k_1)\delta\left(\frac{k'^2}{2}+\frac{k_1'^2}{2}-\frac{k^2}{2}-\frac{k_1^2}{2}\right) \nonumber
\\  &&  \times \Big[  f'f_1'(1-\eta f)(1- \eta f_1) - ff_1(1- \eta f')(1- \eta f_1')  \Big]\,  dk_1 dk' dk_1'.
\end{eqnarray*} 
By a change of variables, it is not difficult to rewrite the above integral in the center of mass reference system:
\begin{equation} \label{CMR}
\quad \quad \quad \colQ_{\text{ee}}(f)(k) =\frac{1}{2}\int_{\mathbb{R}^{2}}\int_{\mathbb{S}^1} \Big[   f'f_1'(1-\eta f)(1- \eta f_1) - ff_1(1- \eta f')(1- \eta f_1')  \Big]\,  d\sigma\,dk_1,
\end{equation} 
where 
\begin{eqnarray*}
\left\{
\begin{array}{l}
\displaystyle k'=\frac{k+k_1}{2}+\frac{|k-k_1|}{2}\sigma, \\
\displaystyle k_1'=\frac{k+k_1}{2}-\frac{|k-k_1|}{2}\sigma.
\end{array}\right.
\end{eqnarray*}
This is just the usual form of the quantum Boltzmann collision operator for a Fermi gas (of 2-D Maxwellian molecules). Our way of computing $\colQ_{\text{ee}}$ follows the fast spectral method in \cite{HY12}. The starting point is to further transform (\ref{CMR}) to a Carleman form
\begin{equation} \label{Carleman}
 \colQ_{\text{ee}}(f) (k)=\int_{\mathbb{R}^{2}}\int_{\mathbb{R}^2} \delta(x\cdot y)\Big[   f'f_1'(1-\eta f)(1- \eta f_1) - ff_1(1- \eta f')(1- \eta f_1')  \Big]\,  dx\,dy,
\end{equation} 
where $k_1=k+x+y$, $k'=k+x$, and $k_1'=k+y$. If $\text{Supp}(f(k))\subset \mathcal{B}_S$ (a ball with radius $S$), we can truncate (\ref{Carleman}) as
\begin{eqnarray*} 
\colQ_{\text{ee}}^R(f)(k) &&=\int_{\mathcal{B}_R}\int_{\mathcal{B}_R} \delta(x\cdot y)\Big[   f'f_1'(1-\eta f)(1- \eta f_1) - ff_1(1- \eta f')(1- \eta f_1')  \Big]\,  dx\,dy \nonumber\\
&&=\int_{\mathcal{B}_R}\int_{\mathcal{B}_R} \delta(x\cdot y)\Big[ (f'f_1'-ff_1)-\eta(f'f_1'f_1+f'f_1'f-f'f_1f-f_1f_1'f) \Big]\,  dx\,dy
\end{eqnarray*}
with $R=2S$. We next choose a computational domain $\mathcal{D}_L=[-L,L]^2$ for $k$, and extend the function $f(k)$ periodically to the whole space $\mathbb{R}^2$. $L$ is chosen such that $L\geq \frac{3\sqrt{2}+1}{2}S$ to avoid aliasing effect. We then approximate $f(k)$ by a truncated Fourier series
\begin{equation} \label{FS}
f(k)=\sum_{j=-N/2}^{N/2-1}\hat{f}_j e^{i\frac{\pi}{L}j\cdot k},
\end{equation}
where
\begin{equation*}
\hat{f}_j=\frac{1}{(2L)^2}\int_{\mathcal{D}_L}f(k) e^{-i\frac{\pi}{L}j\cdot k}\,dk.
\end{equation*}
Inserting the Fourier expansion of $f$ into $\colQ_{\text{ee}}^R(f)$, and performing a spectral-Galerkin projection, we can get the governing equation for $\widehat{\colQ_{\text{ee}}^R}(f)_j$. The computation is sped up by discovering a convolution structure and a separated expansion of the coefficient matrix. The final cost is roughly $O(MN^3\log N)$, where $N$ is the number of points in each $k$ direction, and $M$ is the number of angular discretization. More details can be found in \cite{HY12}. 

\subsubsection{Computing $\colQ_{\text{el}}$}

Under the parabolic band assumption (\ref{para}), (\ref{Qel}) reads (see also (\ref{Qel3}))
\begin{equation} \label{Qel1}
\colQ_{\text{el}}(f)(k) = \int_{\mathbb{R}^2}\delta \left(  \varepsilon' - \varepsilon  \right) \left( f' - f\right) \,dk' = \int_{\mathbb{S}^1}f(|k|\sigma)\,d\sigma-2\pi f(k).
\end{equation}
Compared to $\colQ_{\text{ee}}$, this one is much easier to compute. For instance, one can do a direct numerical quadrature plus interpolation to approximate the integral over $\mathbb{S}^1$. To achieve better accuracy, we here present an efficient way to compute $\colQ_{\text{el}}(f)$ based on the same spectral framework of $\colQ_{\text{ee}}(f)$. Specifically, we still adopt the Fourier expansion (\ref{FS}). Inserting it into (\ref{Qel1}), we get
\begin{equation} \label{Qel2}
\colQ_{\text{el}}(f)(k)=\sum_{j=-N/2}^{N/2-1} B(k,j)\hat{f}_j-2\pi f(k),
\end{equation}
where 
\begin{equation*}
B(k,j)=2\pi J_0\left(\frac{\pi}{L}|k||j|\right).
\end{equation*}
Here $L$ has to be $L\geq \frac{\sqrt{2}+2}{2}S$ to avoid aliasing. A direct computation of the above summation requires obviously $O(N^4)$ flops, which can be quite costly. But note that the coefficient matrix $B(k,j)$ only depends on the magnitude of $k$ and $j$, which means that its rank is roughly $O(N)$. Therefore, we can find a low rank decomposition of $B(k,j)$ as (for 2-D problems this can be precomputed via a SVD)
\begin{equation*}
B(k,j)=\sum_{r=1}^{O(N)}U_r(k)V_r(j).
\end{equation*}
Then computing the summation in (\ref{Qel2}) becomes
\begin{equation*}
\sum_{j=-N/2}^{N/2-1} B(k,j)\hat{f}_j=\sum_{j=-N/2}^{N/2-1} \sum_{r=1}^{O(N)} U_r(k)V_r(j)\hat{f}_j=\sum_{r=1}^{O(N)} U_r(k)\left [\sum_{j=-N/2}^{N/2-1} V_r(j)\hat{f}_j\right].
\end{equation*}
The cost is reduced to $O(N^3)$.

To summarize, we have two fast spectral solvers for the collision operators $\colQ_{\text{el}}$ and $\colQ_{\text{ee}}$ (the cost of $\colQ_{\text{ee}}$ is dominant due its intrinsic complexity). They both provide high-order accuracy and are thus suitable for testing asymptotic properties of our scheme.

\section{Numerical examples}

In this section, we present several numerical examples using our AP schemes. The wave vector $k$ is assumed to be 2-D: $[-L_k,L_k]^2$ is the computational domain and $N_k$ is the number of points in each $k$ direction. The space is 1-D: $x\in [0,L_x]$ and $N_x$ is the number of spatial discretization.


\subsection{The spatially homogeneous case}

We first check the behavior of the solution in the spatially homogeneous case. Consider the nonequilibrium initial data
\begin{equation*}
f_0(k_1,k_2)=\frac{1}{\pi}\left[(k_1-1)^2+(k_2-0.5)^2\right]e^{-\left[(k_1-1)^2+(k_2-0.5)^2\right]}.
\end{equation*}
The parameters are chosen as $\alpha=1e-3$ (diffusive regime), $\eta=10$ (strong quantum effect; the equilibrium is very different from the classical Maxwellian), $L_k=10.5$, $N_k=64$. Under this condition, a stable explicit scheme would require $\Delta t= O(1e-6)$, while our scheme gives fairly good results with much coarser time step $\Delta t=1$. Figure \ref{fig: errorAP1} shows the AP error in $L^\infty$-norm
\begin{equation} \label{errorAP1}
\text{errorAP}_{L^\infty}^n = \max_{k_1,k_2} | f^n - M^n|
\end{equation}
with time. Here we can see that during the initial period of time, this error decreases very slowly as explained in (\ref{radial}): $f$ is only driven to some function of $\varepsilon$, not $M$, because of the  dominating mechanism $\Qel$. Once the threshold comes into play as shown in (\ref{fastconv}), $f$ will start to converge to $M$ at a reasonable speed, which appears as a sharp transition in dashed curve in Figure \ref{fig: errorAP1}. As a comparison, the solid curve is obtained by the regular AP scheme without threshold, the error decreases very slowly as in (\ref{oldAP1}). Figure \ref{fig: evolution} displays the evolution of $f$ at different times, where we clearly see that $f$ first transits from non-radially symmetric to radially symmetric and then moves toward the desired Fermi-Dirac distribution $M$.

\begin{figure}[!h]
        \centering
       \includegraphics[width=0.7\textwidth]{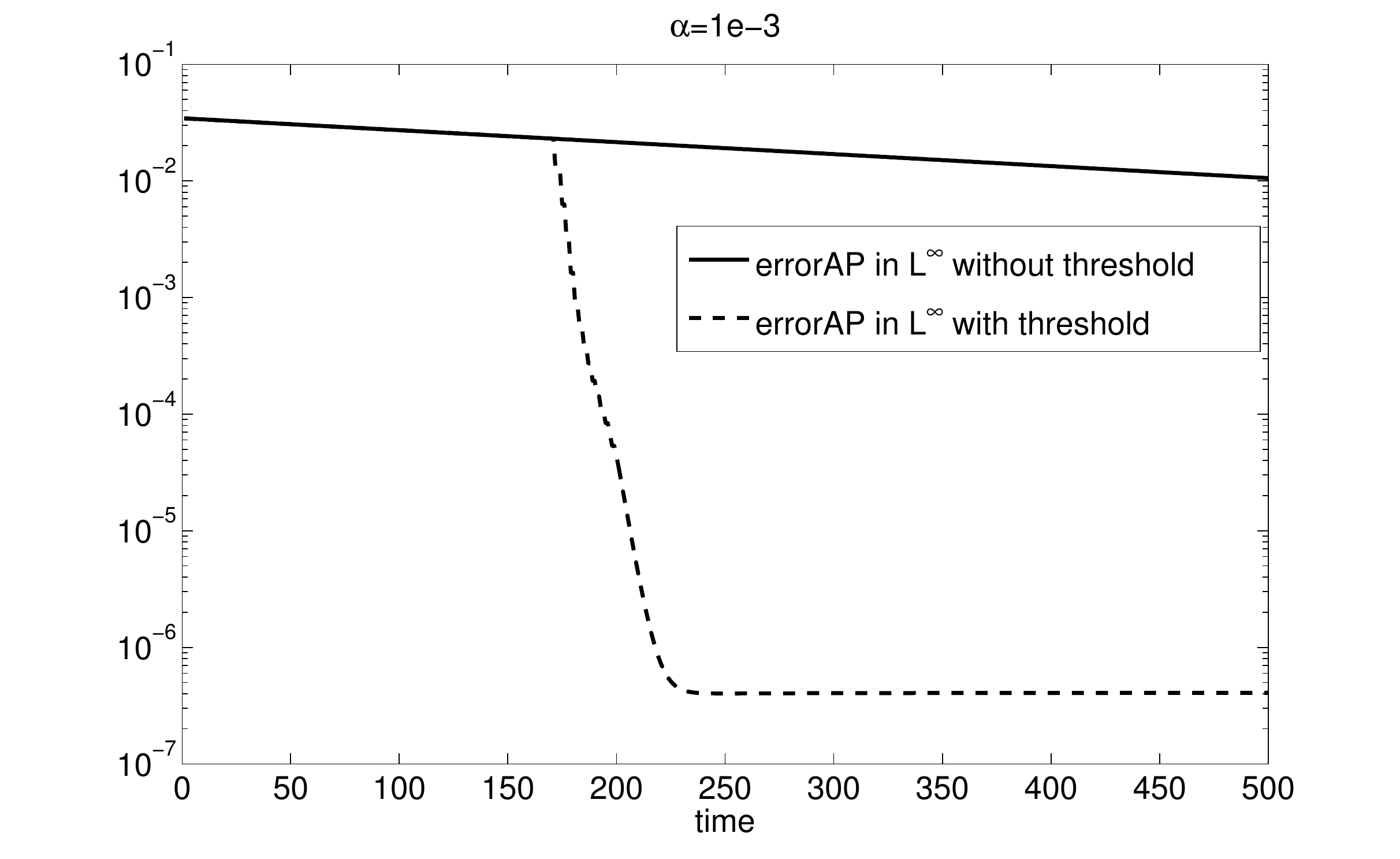}
 \caption{Plots of the asymptotic error (\ref{errorAP1}) versus time. Here $\eta=10$, $\Delta t=1$, $L_k=10.5$, and $N_k=64$.}
 \label{fig: errorAP1}
 \end{figure}
 
 \begin{figure}[!h]
  \begin{minipage}[t]{0.5\linewidth}
       \centering
       \includegraphics[width=1.1\textwidth]{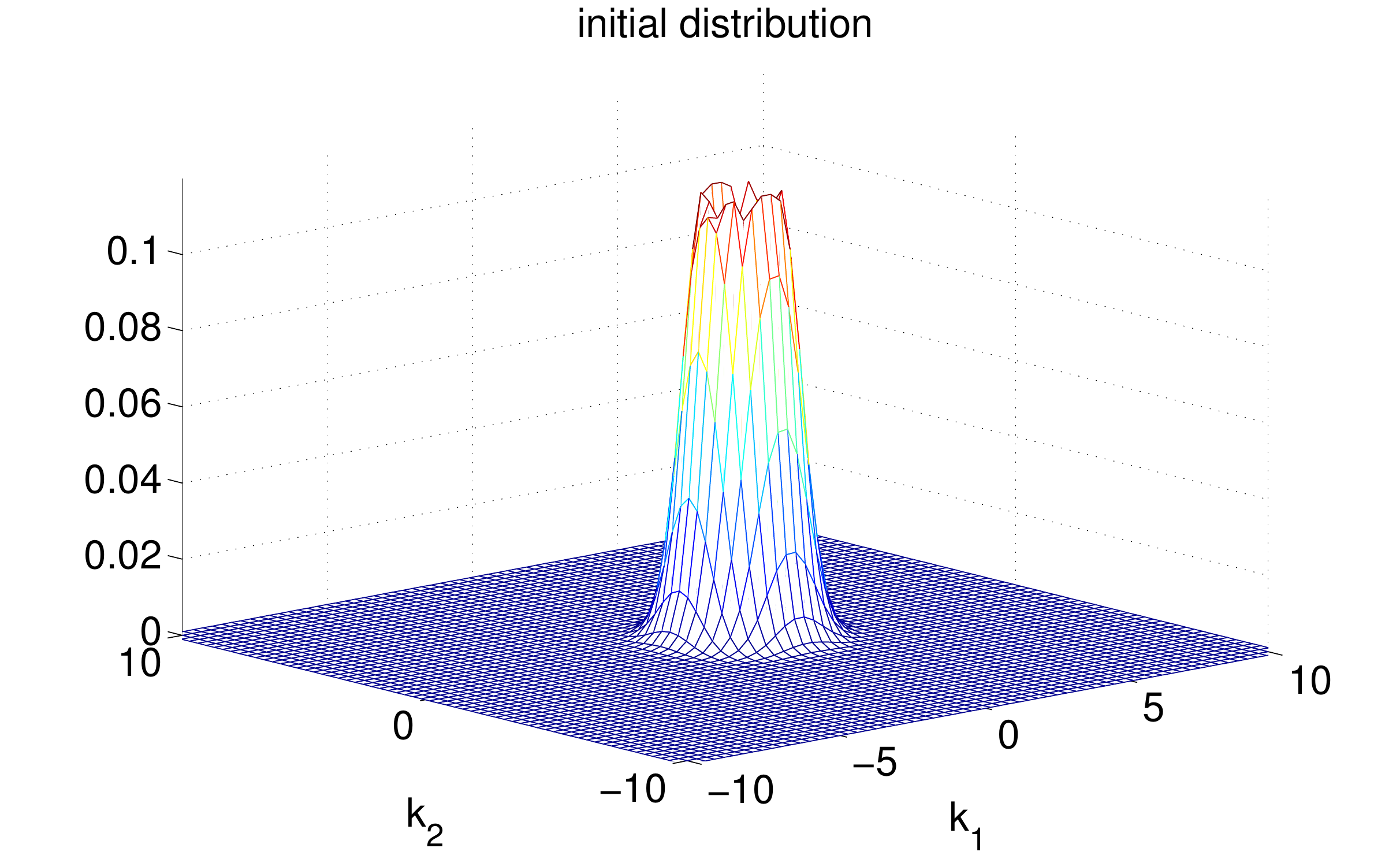}
 \end{minipage}
 \begin{minipage}[t]{0.5\linewidth}
      \centering
     \includegraphics[width=1.1\textwidth]{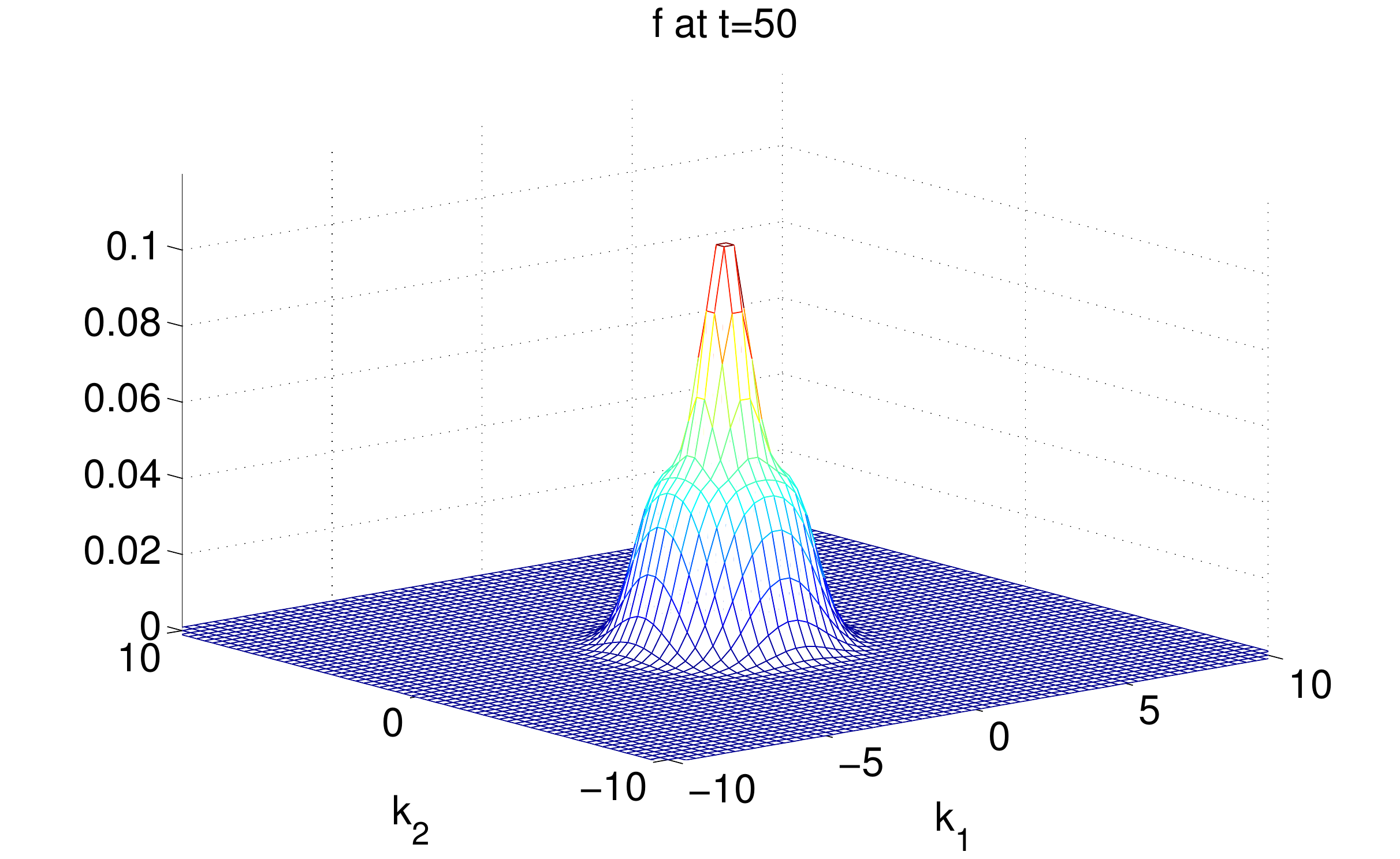}
  \end{minipage}
  \\
    \begin{minipage}[t]{0.5\linewidth}
       \centering
       \includegraphics[width=1.1\textwidth]{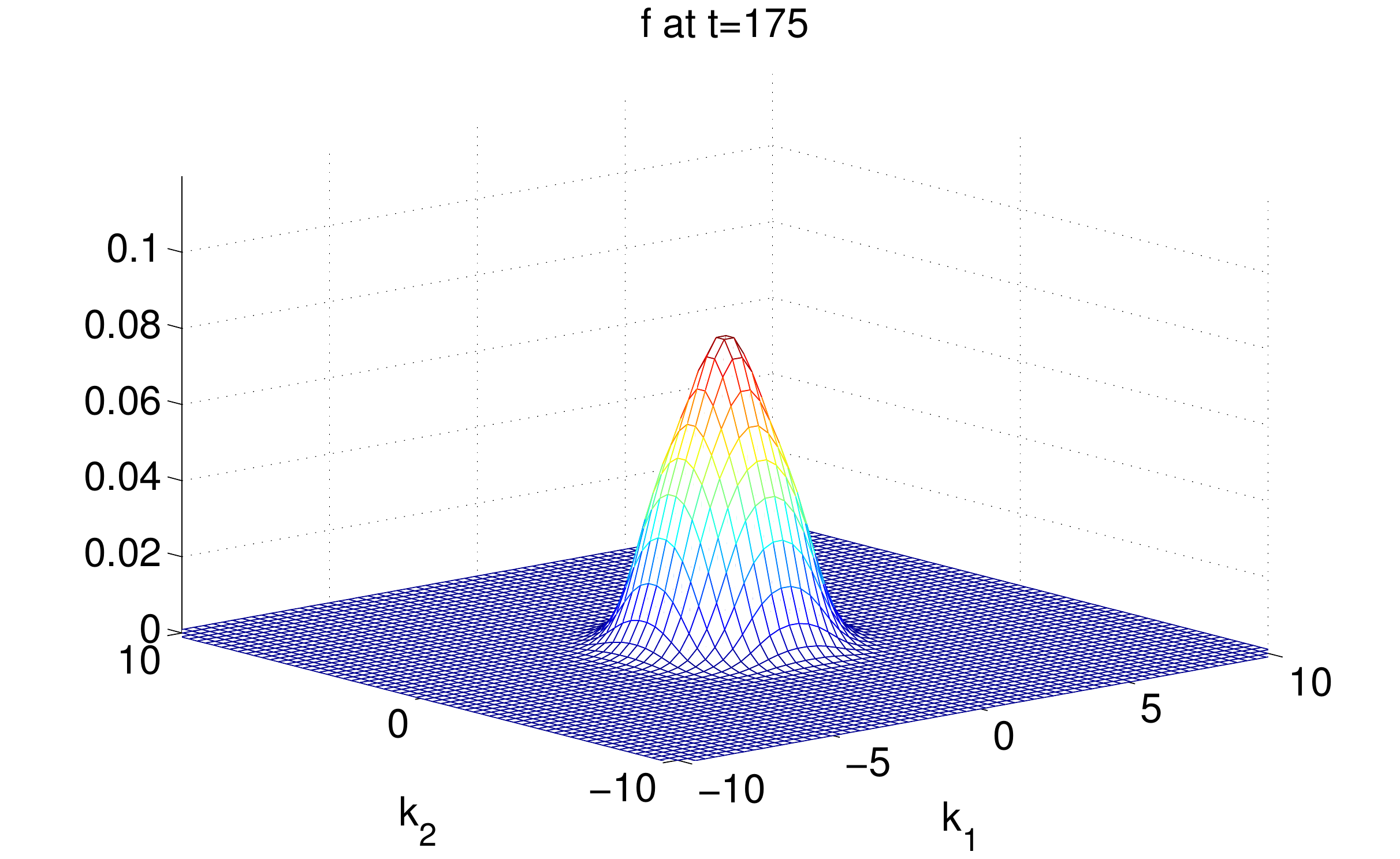}
 \end{minipage}
 \begin{minipage}[t]{0.5\linewidth}
      \centering
     \includegraphics[width=1.1\textwidth]{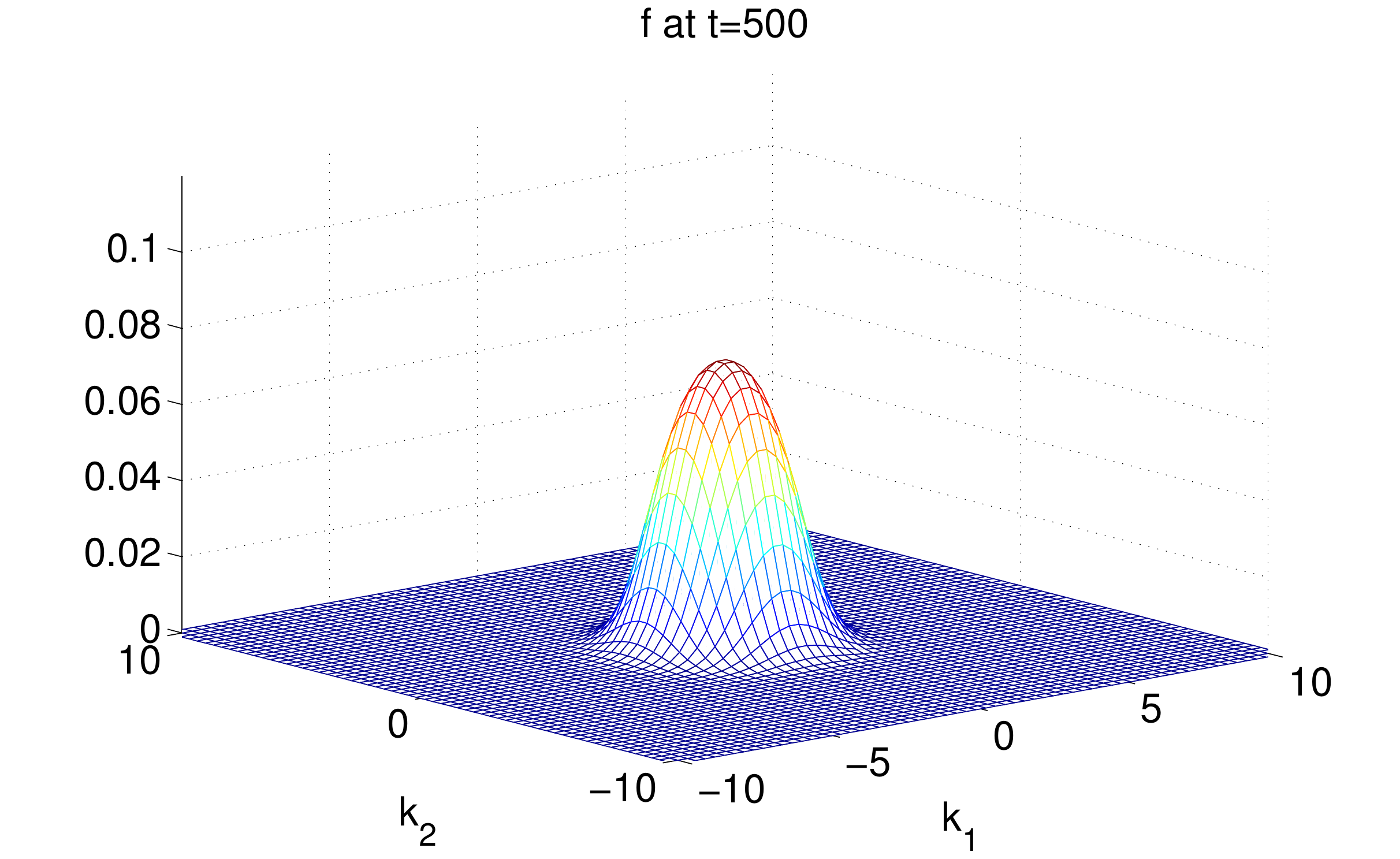}
  \end{minipage} 
 \caption{Evolution of $f$ at times $t=0, 50, 175$ and $500$. Here $\eta=10$, $\Delta t=1$, $L_k=10.5$, and $N_k=64$.}
 \label{fig: evolution}
 \end{figure}


\subsection{The spatially inhomogeneous case}

In the rest of simulation, we always take $L_x=1$, $L_k=9.2$, and assume periodic boundary condition in $x$ and zero boundary in $k$. We will consider both $\eta$ small which corresponds to the classical (nondegenerate) regime and $\eta$ not small which leads to the quantum (degenerate) regime. The time step $\Delta t$ is chosen to only satisfy the parabolic CFL condition: $\Delta t=O(\Delta x^2)$ (independent of $\alpha$).

\subsubsection{AP property}

Consider equation (\ref{kinetic1}) with nonequilibrium initial data 
\begin{equation*}
f_0(x,k_1,k_2) =\frac{1}{2\pi} \left( e^{-80(x-\frac{L_x}{2})^2}+1 \right) \left(   e^{- \left[(k_1-1)^2+k_2^2 \right]} + e^{- \left[(k_1+1)^2+k_2^2 \right] }  \right).
\end{equation*}
The electric field $\dx V$ is set to be one. 

We check the asymptotic property by looking at the distance between $r$ and $M$, i.e.,
\begin{equation} \label{eqn: errorAP}
\text{errorAP}_{L^1}^n = \sum_{x,k_1,k_2} | r^n - M^n| \Dx \Dk^2, \   \  
\text{errorAP}_{L^\infty}^n = \max_{x,k_1,k_2} | r^n - M^n|.
\end{equation}
The results are gathered in Figure \ref{fig: errorAP}, where we observe a similar trend as in the space homogeneous case that $r$ converges to $M$ in two stages: first to a radially symmetric function (some function of $\varepsilon$) and then the local Maxwellian $M$. This in some sense mimics the Hilbert expansion in the derivation of ET model in Section 2.

\begin{figure}[!h]
  \begin{minipage}[t]{0.5\linewidth}
       \centering
       \includegraphics[width=1.1\textwidth]{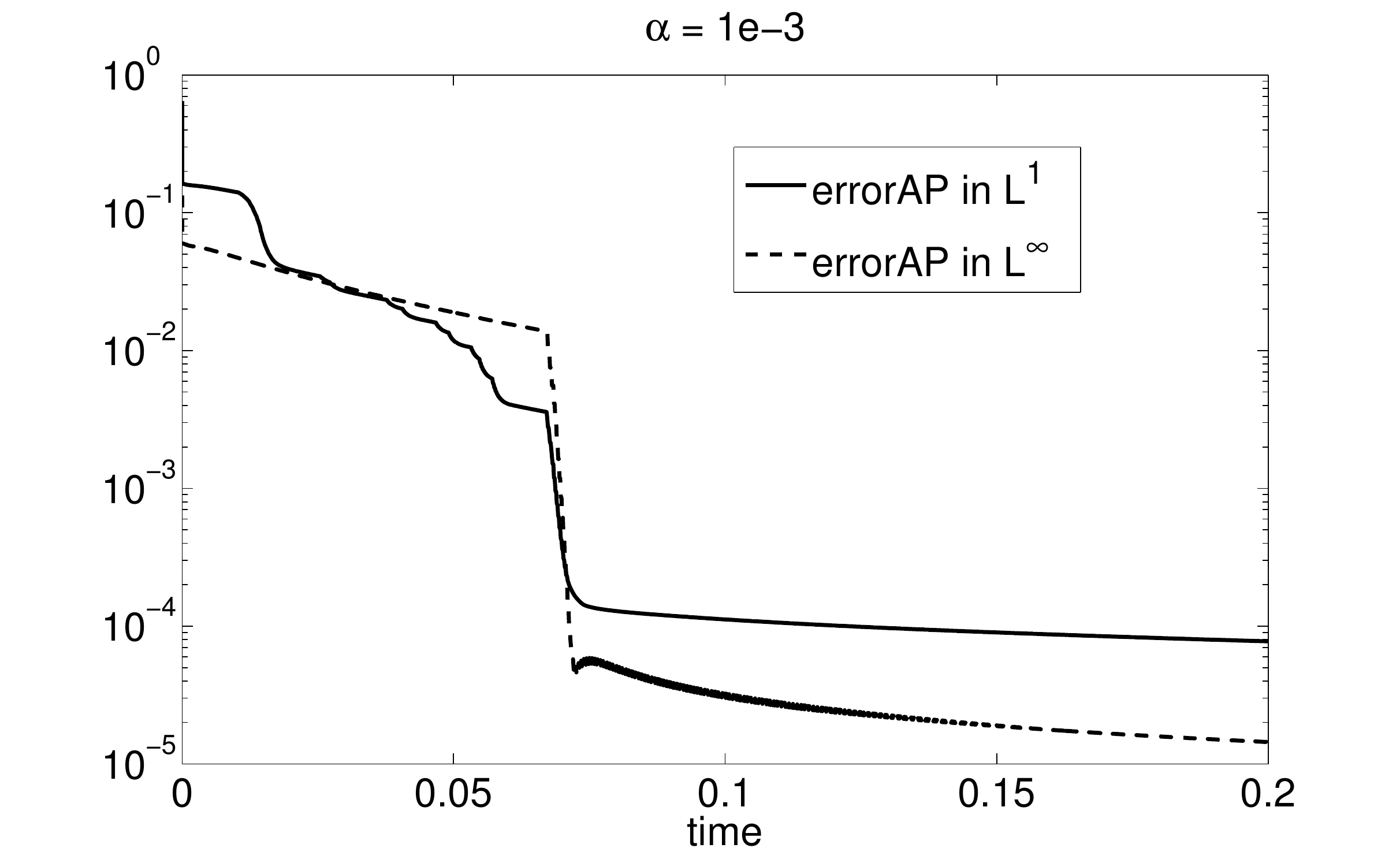}
 \end{minipage}
 \begin{minipage}[t]{0.5\linewidth}
      \centering
     \includegraphics[width=1.1\textwidth]{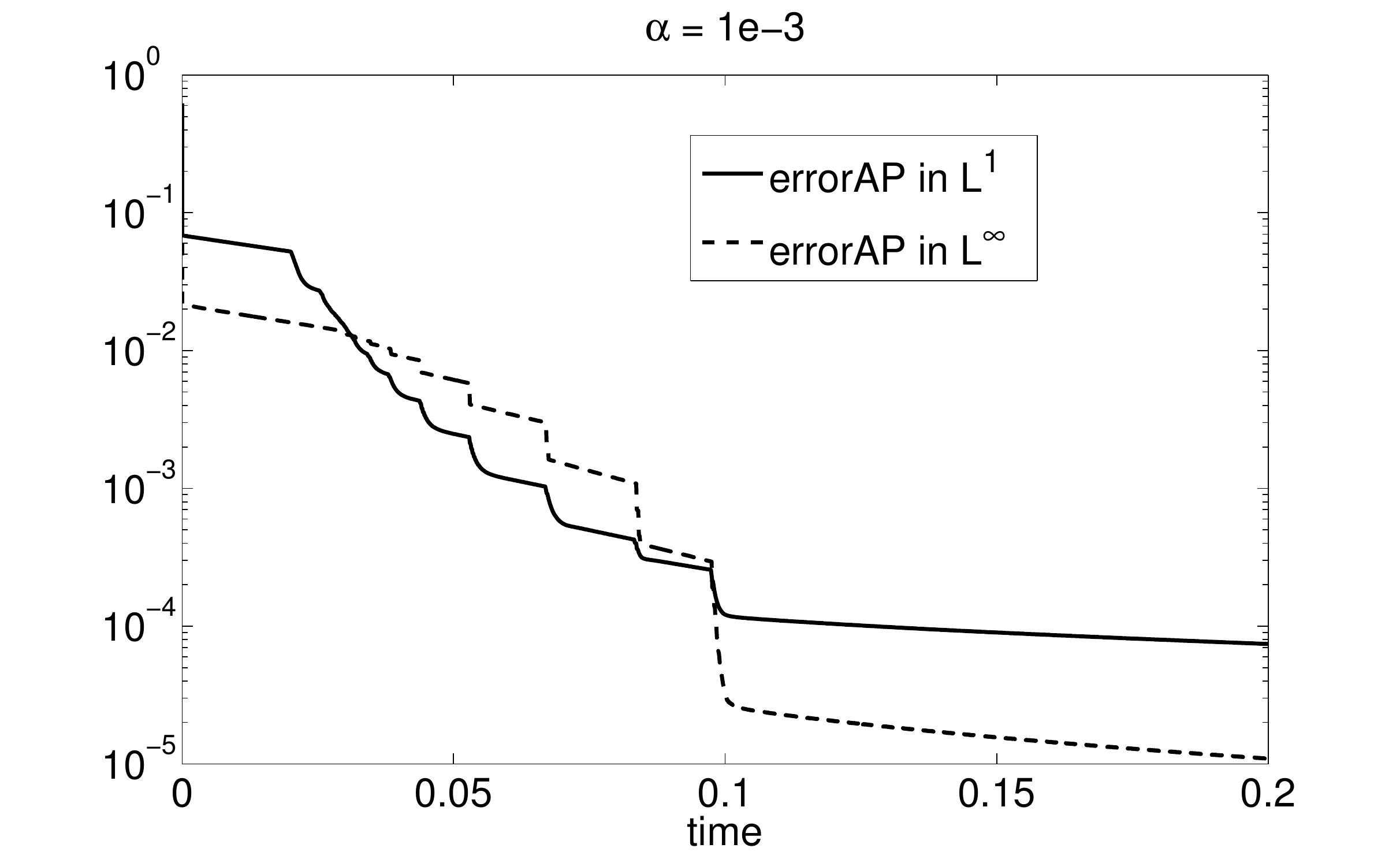}
  \end{minipage}
 \caption{Plots of the asymptotic error (\ref{eqn: errorAP}) versus time. Here $N_x=40$, $\Dt = 0.2 \Dx^2$. Left: $\eta = 0.01$ (classical regime), $N_k = 32$. Right: $\eta = 3$ (quantum regime), $N_k=64$.}
 \label{fig: errorAP}
 \end{figure}
 

\subsubsection{1-D $n^+$--$n$--$n^+$ ballistic silicon diode}

We finally simulate a 1-D $n^+$--$n$--$n^+$ ballistic silicon diode, which is a simple model of the channel of a MOS transistor. The initial data is taken to be
\begin{eqnarray*}
f_0 (x,k_1,k_2)=\ && \left( \!\!1.1+ \frac{ \tanh\left( 40(x-\frac{5L_x}{8})\right) - \tanh\left( 40(x-\frac{3L_x}{8}) \right) }{2}  \right) \nonumber
\\&& \times \left(   e^{- \left[(k_1-1)^2+k_2^2 \right]} + e^{- \left[(k_1+1)^2+k_2^2 \right] }  \right),
\end{eqnarray*}
For Poisson equation (\ref{eqn: Poisson}), we choose $h(x) =\rho_0(x)=\int f_0\,dk$, $\epsilon_r(x) \equiv1$, $C_0 = 1/1000$, with boundary condition $V(0) = 0, V(L_x) =1$. The doping profile $h(x)$ is shown in Figure \ref{fig: doping}. 

We consider two regimes:  one is the kinetic regime with $\alpha =1$, where we compare our solution with the one obtained by the explicit scheme (forward Euler); the other is the diffusive regime with $\alpha = 1e-3$, where our solution is compared with that of the ET system using the kinetic solver (\ref{scheme: ET}). Both $\eta = 1e-2$ and $\eta = 1$ are checked and good agreements are obtained in Figures \ref{fig: eta1alpha1}, \ref{fig: eta1e-2alpha1}, \ref{fig: eta1alpha1e-3}, \ref{fig: eta1e-2alpha1e-3}. Here the macroscopic quantities plotted are mass density $\rho$, internal energy $\energy$ defined in (\ref{rhoe}), electron temperature $T$ and fugacity $z$ obtained through (\ref{system}), electric field $E= - \dx V$, and mean velocity $u$ defined as $u = j_\rho/\rho$. 

\begin{figure}[!h]
\centering
\includegraphics[width = 0.6\textwidth]{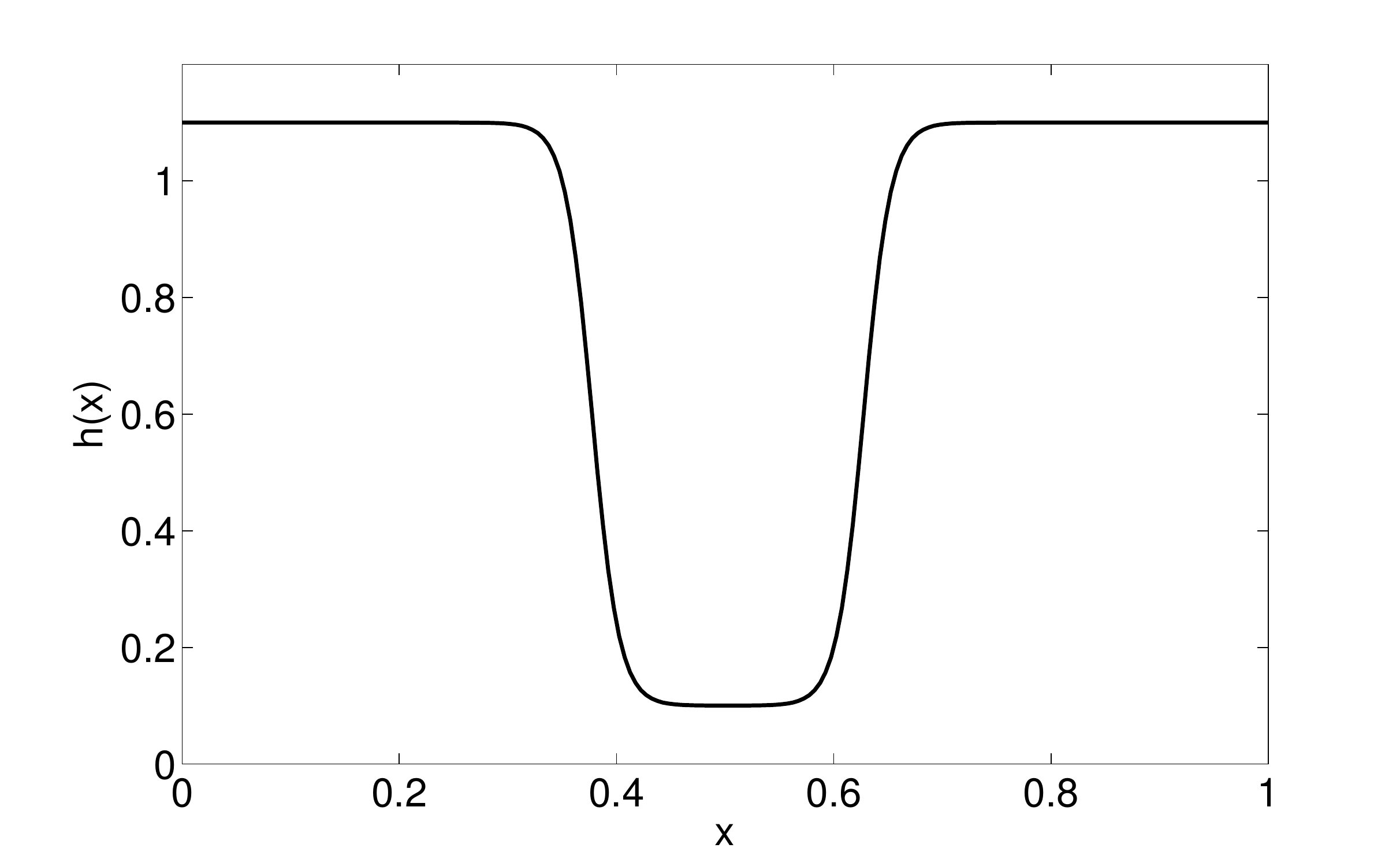}
\caption{Doping profile $h(x)$ for 1-D $n^+$--$n$--$n^+$ ballistic silicon diode.}
\label{fig: doping}
\end{figure}

 \begin{figure}[!h]
  \begin{minipage}[t]{0.5\linewidth}
       \centering
       \includegraphics[width=1.1\textwidth]{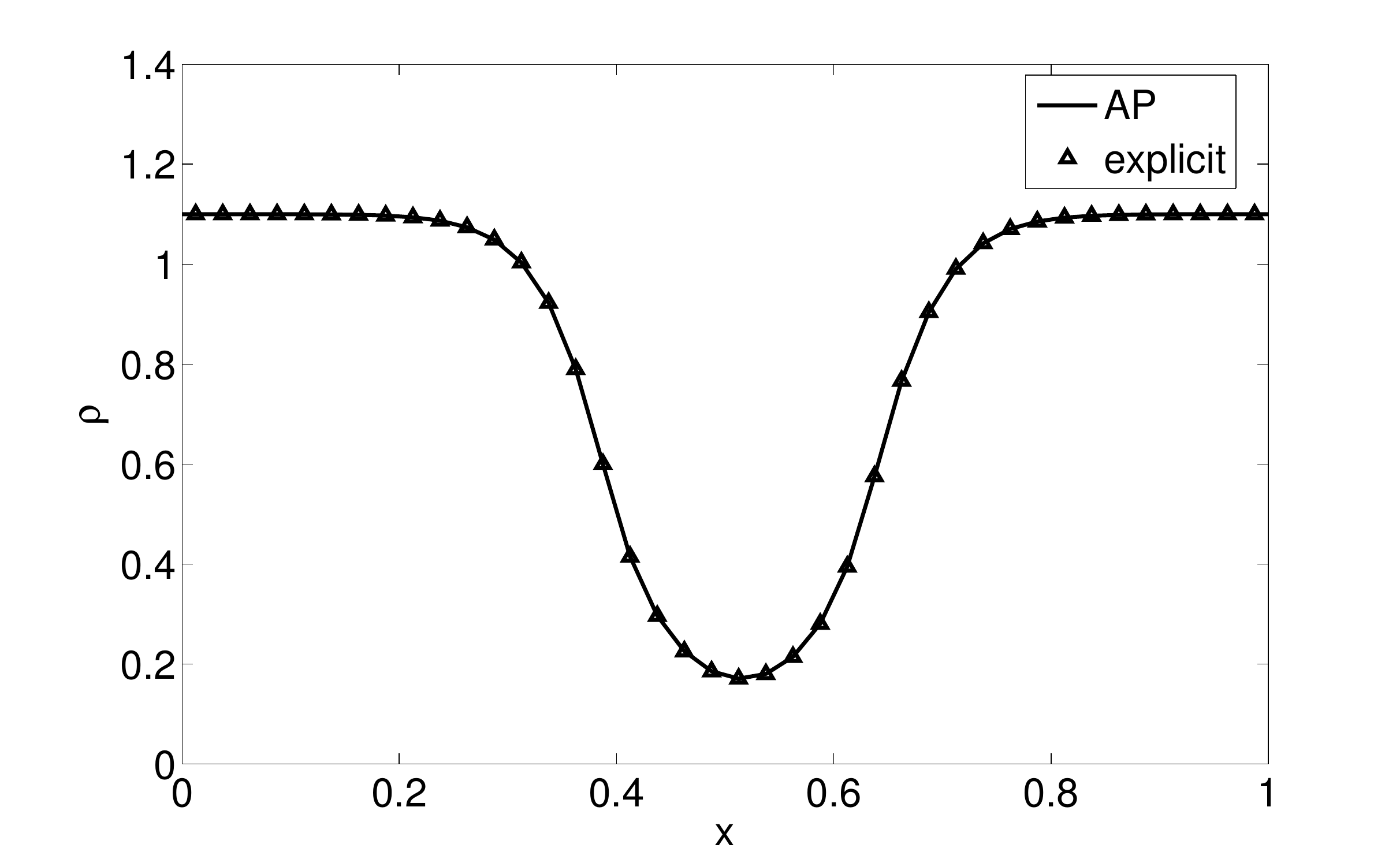}
 \end{minipage}
 \begin{minipage}[t]{0.5\linewidth}
      \centering
      \includegraphics[width=1.1\textwidth]{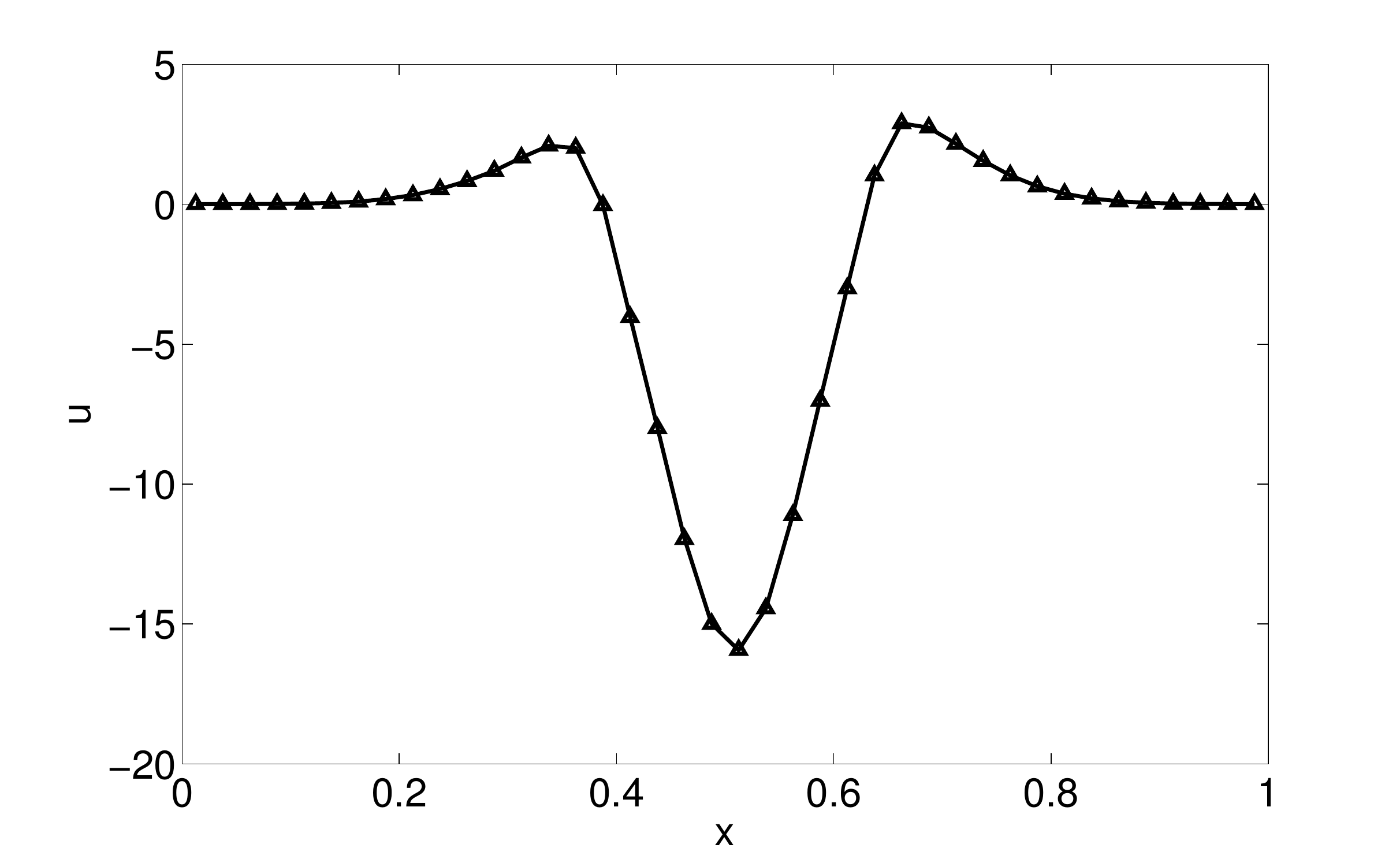}
  \end{minipage}
\\
  \begin{minipage}[t]{0.5\linewidth}
       \centering
       \includegraphics[width=1.1\textwidth]{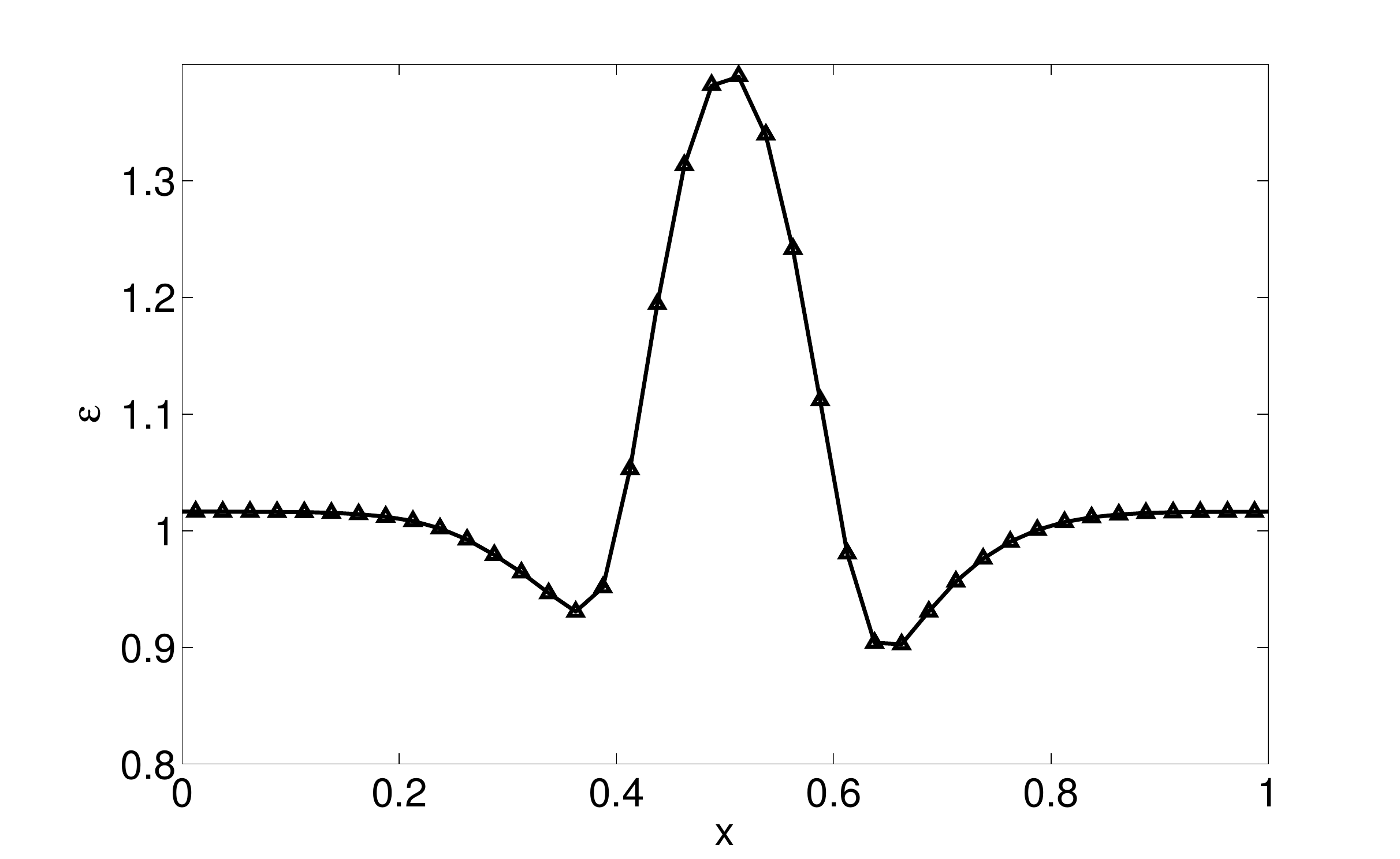}
 \end{minipage}
 \begin{minipage}[t]{0.5\linewidth}
      \centering
      \includegraphics[width=1.1\textwidth]{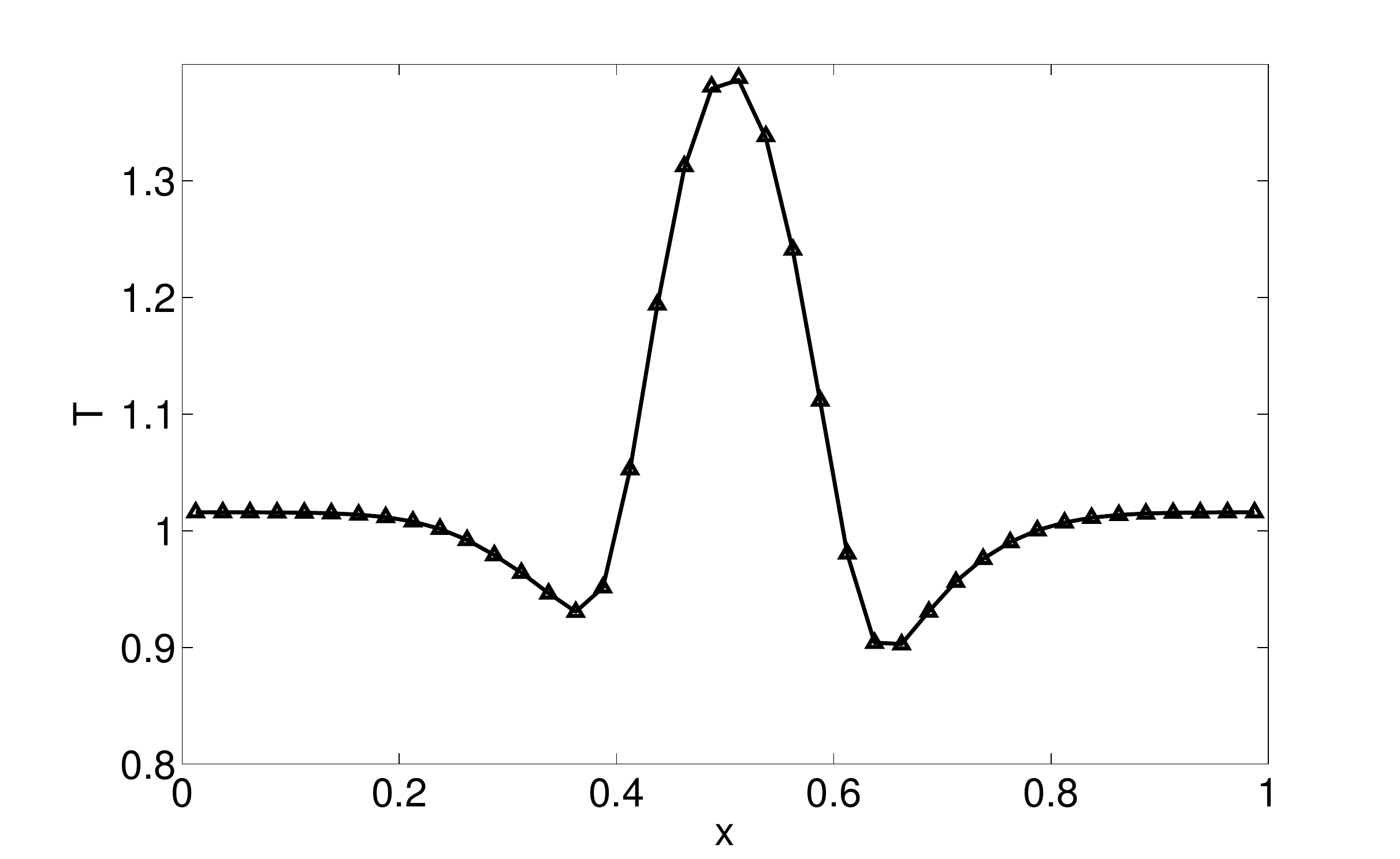}
  \end{minipage}
  \\
    \begin{minipage}[t]{0.5\linewidth}
       \centering
       \includegraphics[width=1.1\textwidth]{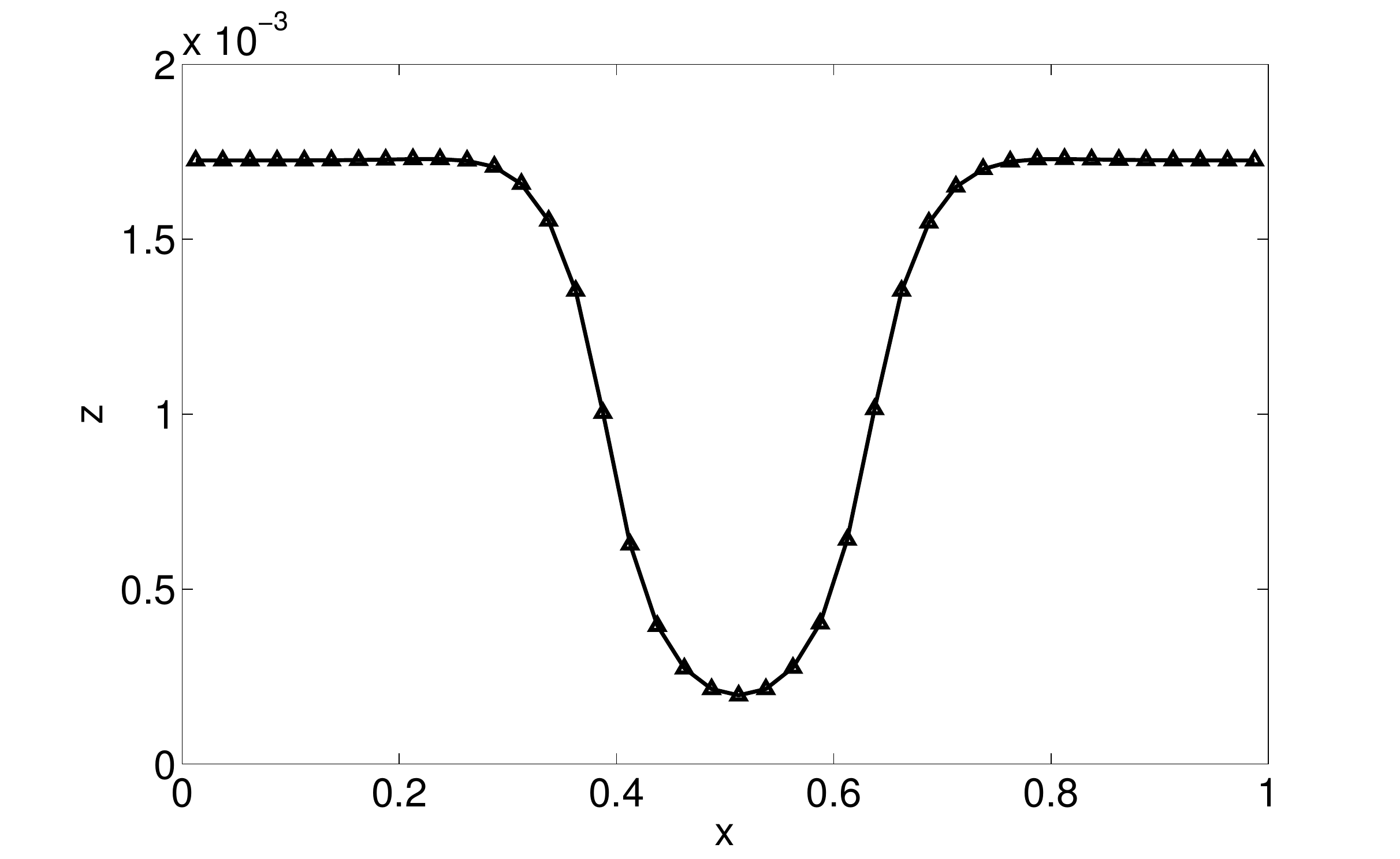}
 \end{minipage}
 \begin{minipage}[t]{0.5\linewidth}
      \centering
      \includegraphics[width=1.1\textwidth]{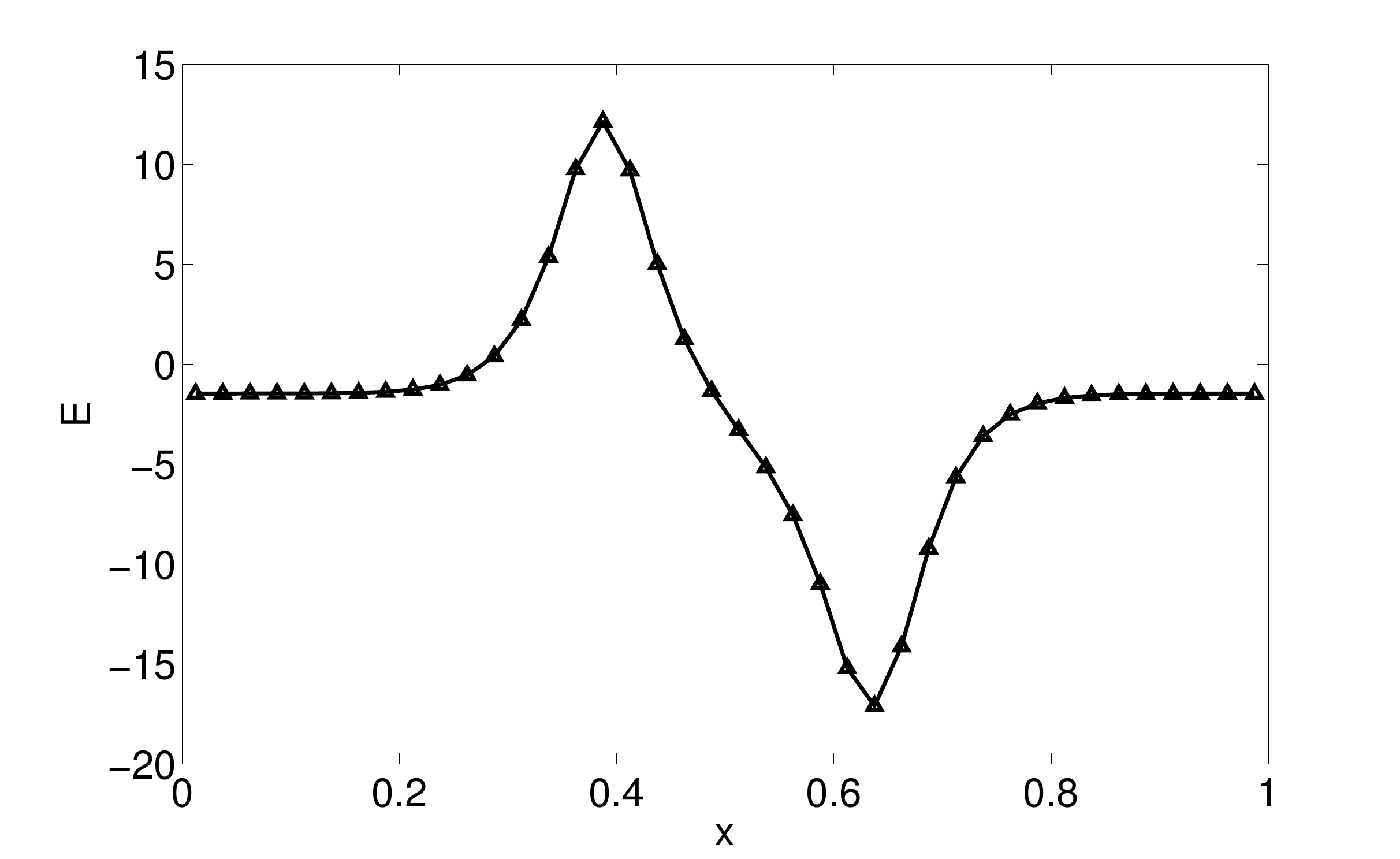}
  \end{minipage}
 \caption{Density $\rho$, velocity $u$, internal energy $\energy$, temperature $T$, fugacity $z$, and electric field $E$. `---' is the AP scheme, `$\triangle$' is the forward Euler scheme. Here $\alpha = 1$, $\eta = 0.01$, $N_x = 40$, $N_k = 32$, $\Dt = 0.2 \Dx^2$.}
 \label{fig: eta1e-2alpha1}
 \end{figure}

 \begin{figure}[!h]
  \begin{minipage}[t]{0.5\linewidth}
       \centering
       \includegraphics[width=1.1\textwidth]{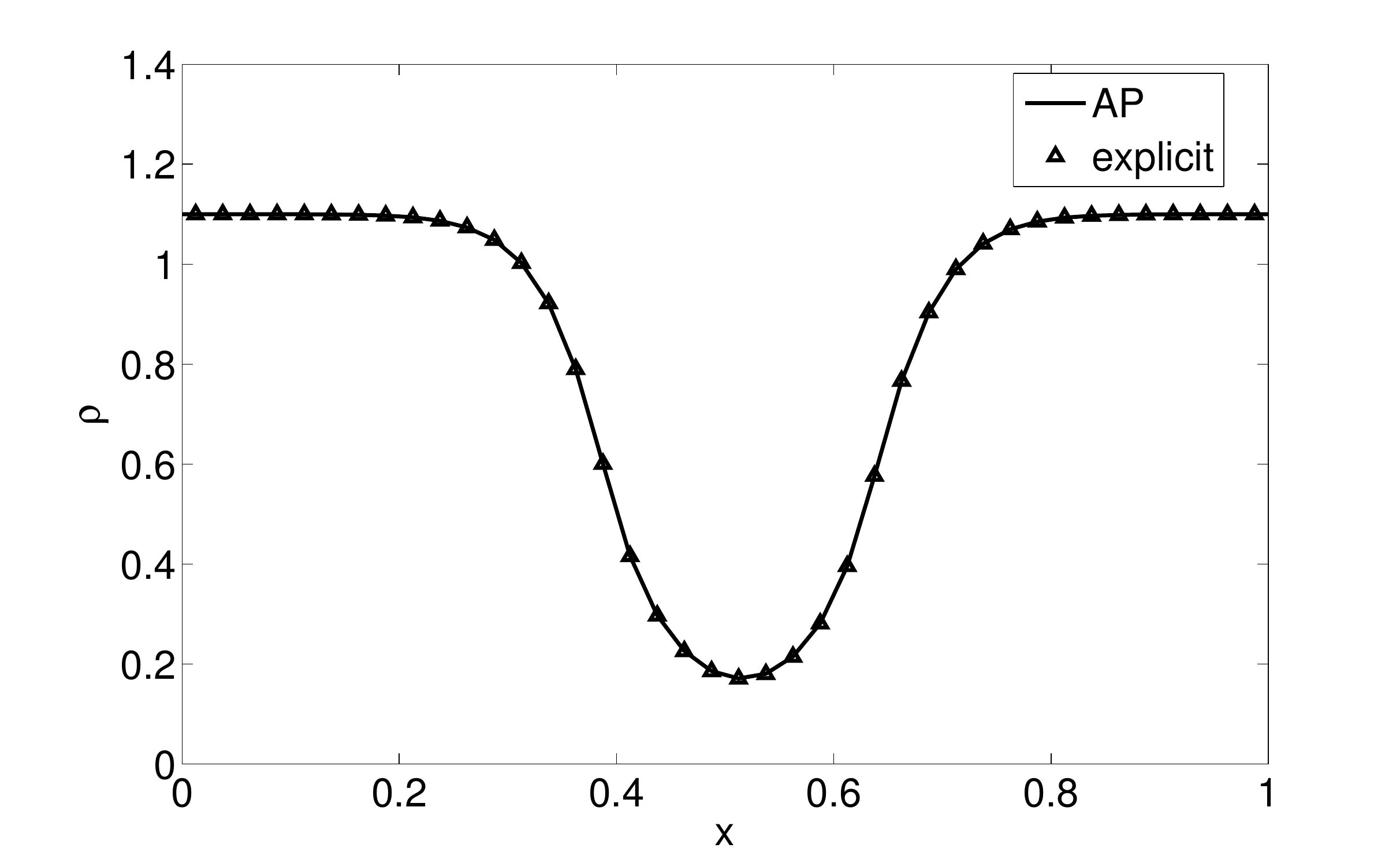}
 \end{minipage}
 \begin{minipage}[t]{0.5\linewidth}
      \centering
      \includegraphics[width=1.1\textwidth]{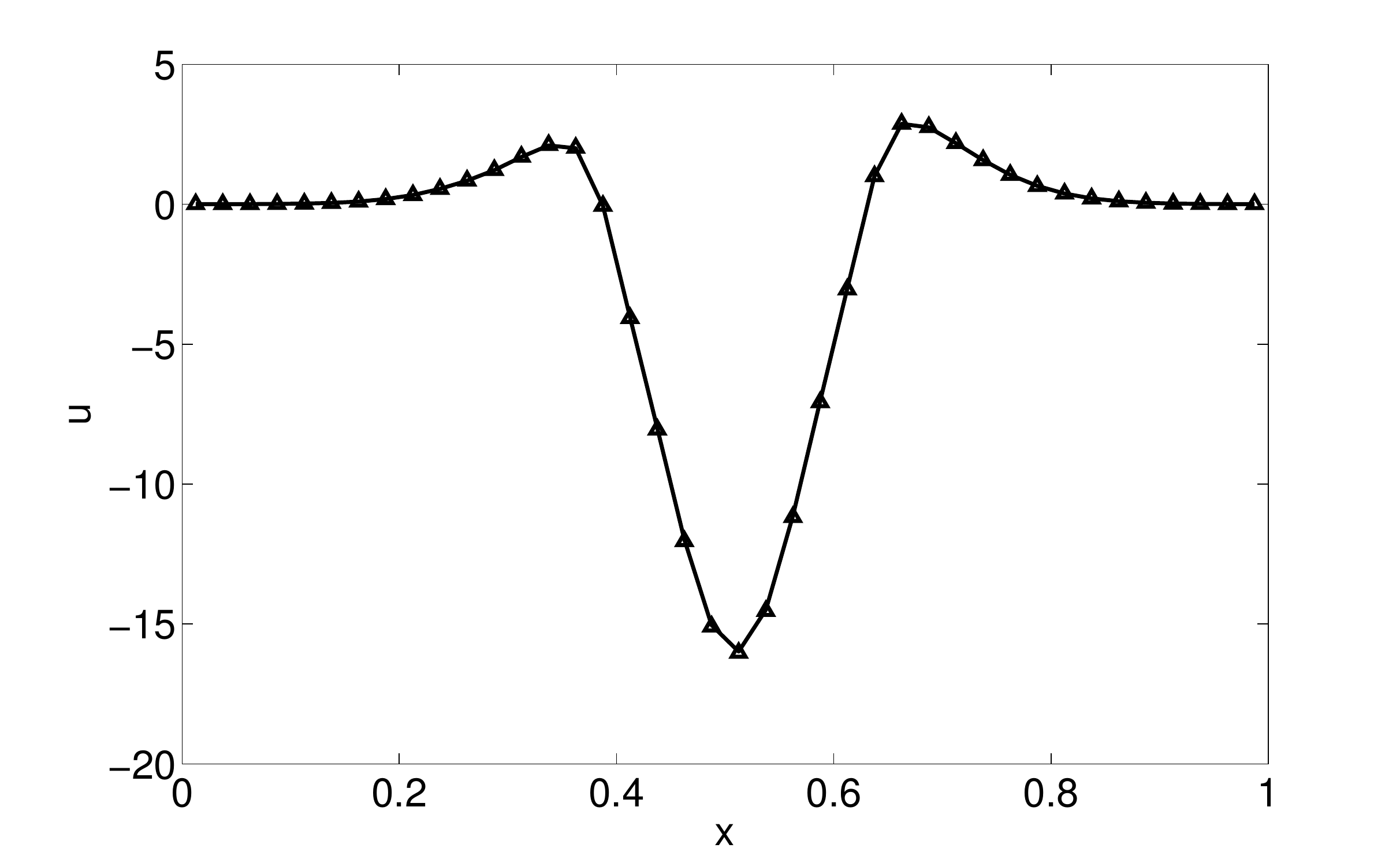}
  \end{minipage}
\\
  \begin{minipage}[t]{0.5\linewidth}
       \centering
       \includegraphics[width=1.1\textwidth]{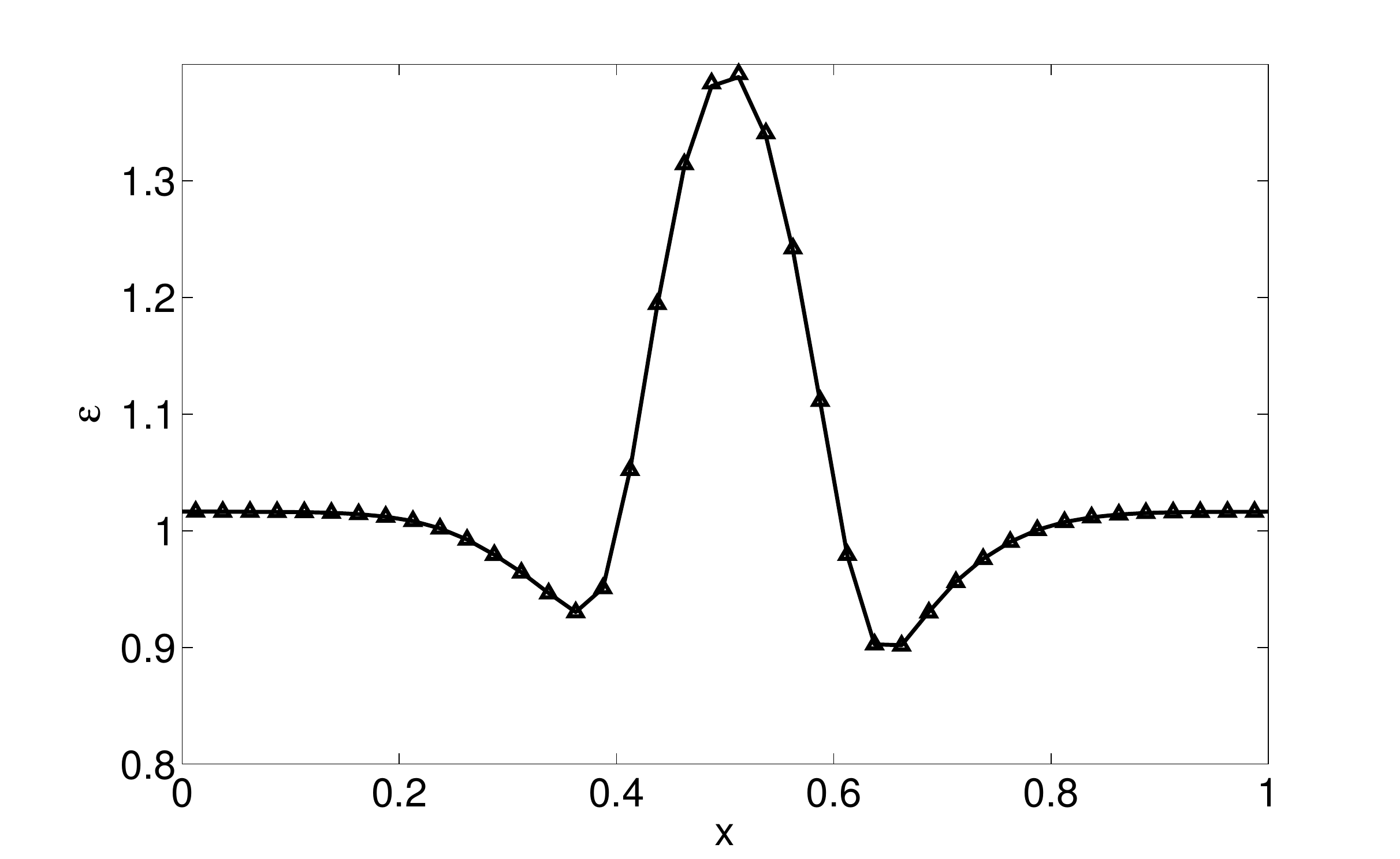}
 \end{minipage}
 \begin{minipage}[t]{0.5\linewidth}
      \centering
      \includegraphics[width=1.1\textwidth]{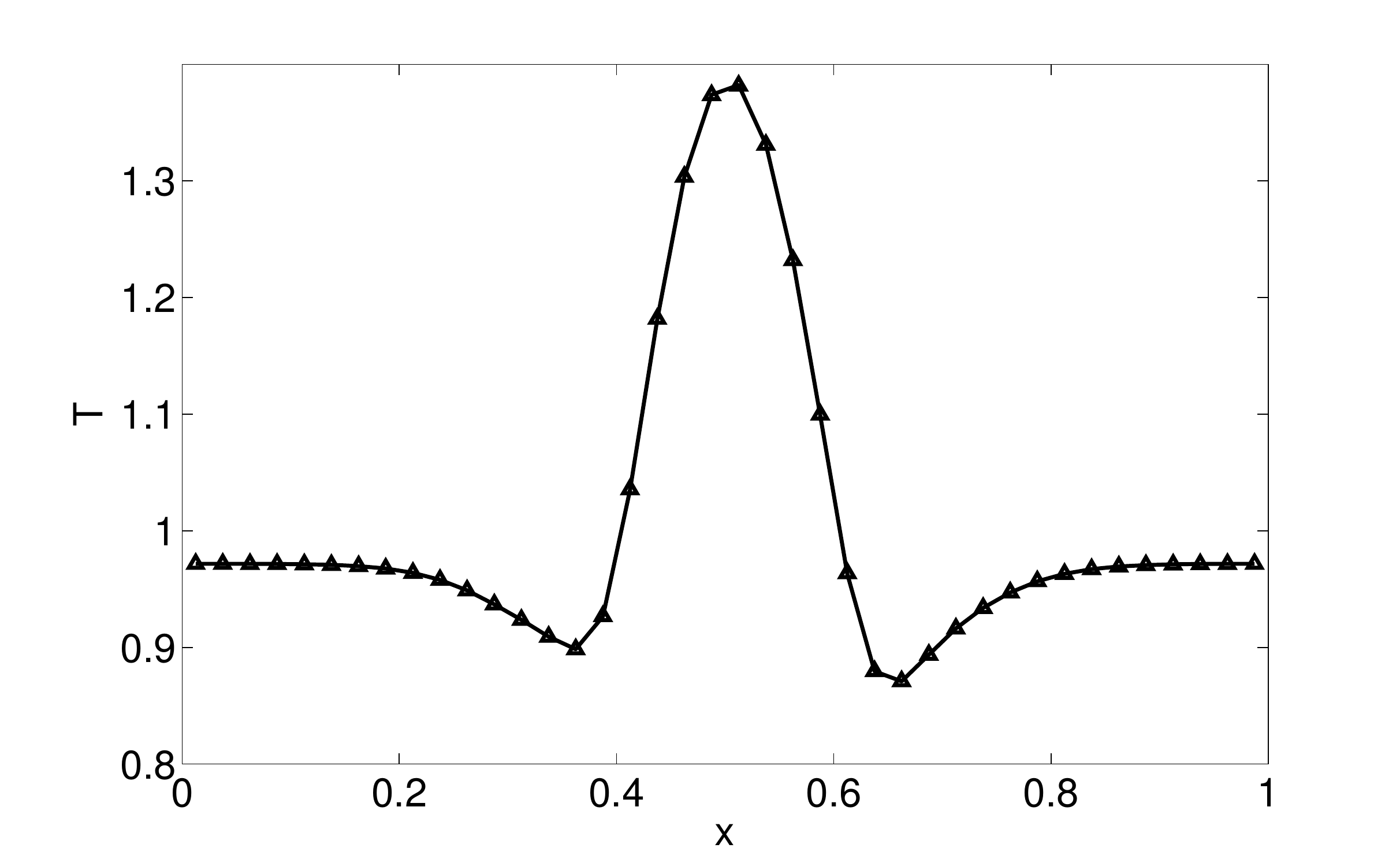}
  \end{minipage}
  \\
    \begin{minipage}[t]{0.5\linewidth}
       \centering
       \includegraphics[width=1.1\textwidth]{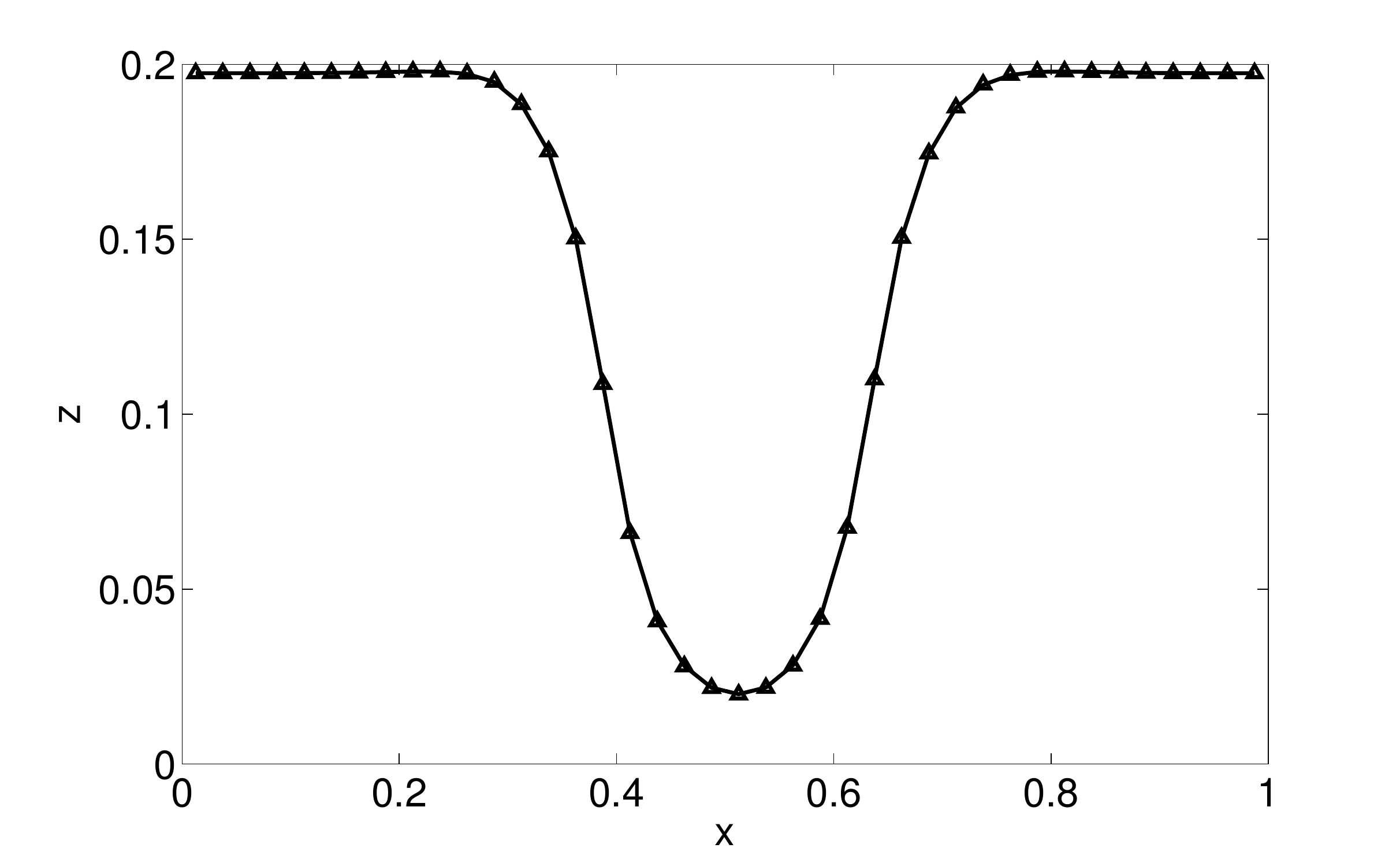}
 \end{minipage}
 \begin{minipage}[t]{0.5\linewidth}
      \centering
      \includegraphics[width=1.1\textwidth]{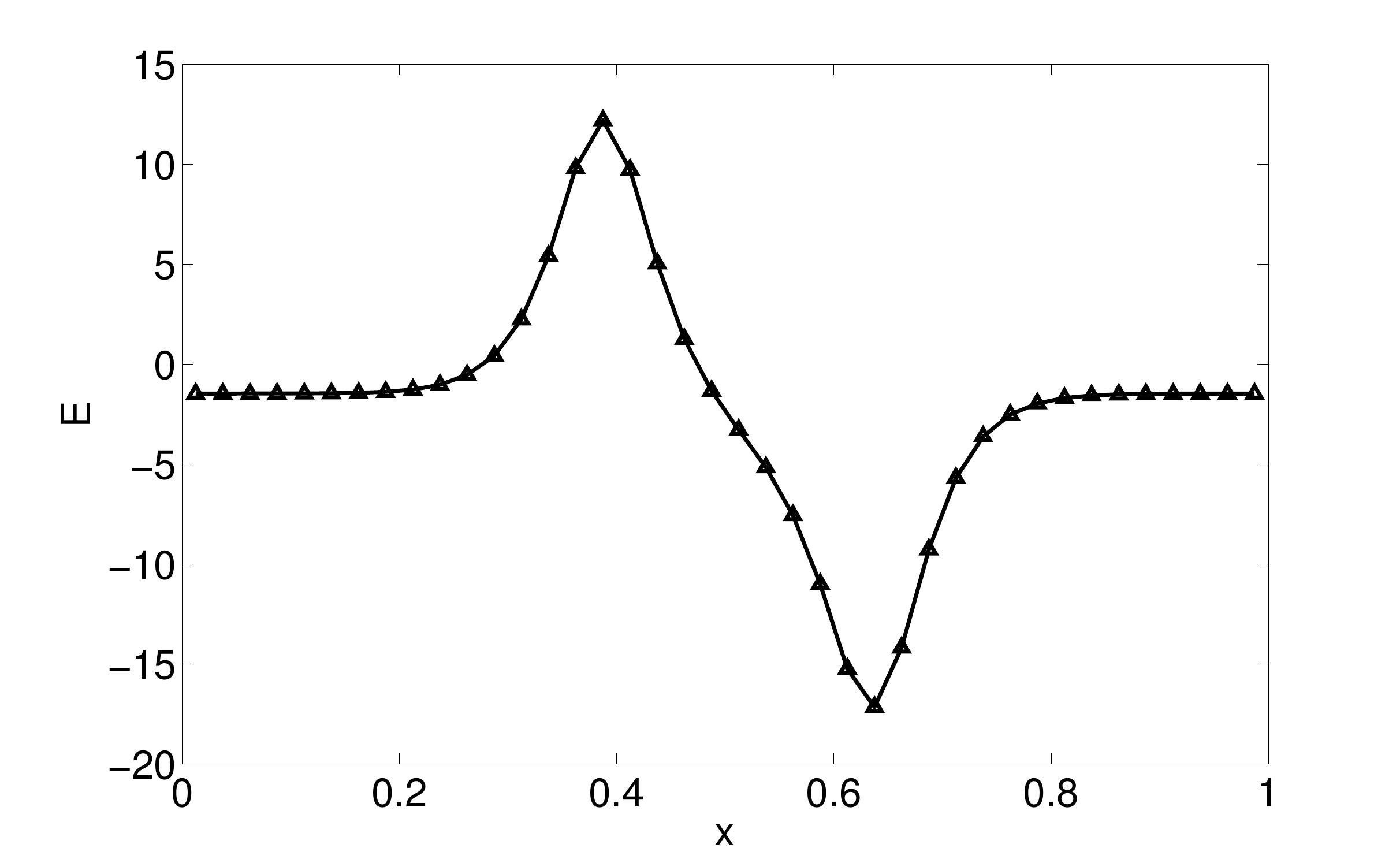}
  \end{minipage}
 \caption{Density $\rho$, velocity $u$, internal energy $\energy$, temperature $T$, fugacity $z$, and electric field $E$. `---' is the AP scheme, `$\triangle$' is the forward Euler scheme. Here $\alpha = 1$, $\eta = 1$, $N_x = 40$, $N_k = 64$, $\Dt = 0.2 \Dx^2$.}
 \label{fig: eta1alpha1}
 \end{figure}

   \begin{figure}[!h]
  \begin{minipage}[t]{0.5\linewidth}
       \centering
       \includegraphics[width=1.1\textwidth]{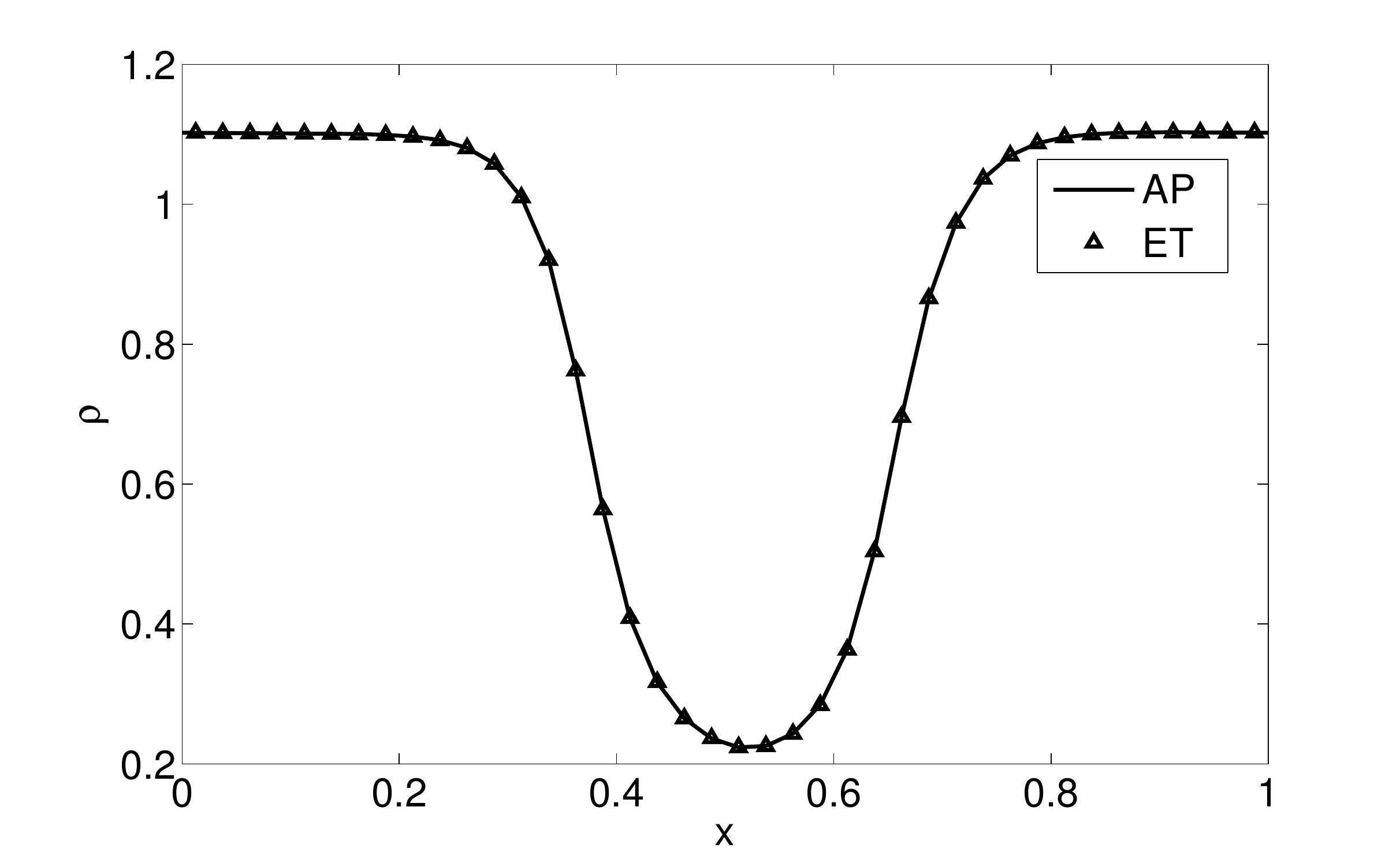}
 \end{minipage}
 \begin{minipage}[t]{0.5\linewidth}
      \centering
      \includegraphics[width=1.1\textwidth]{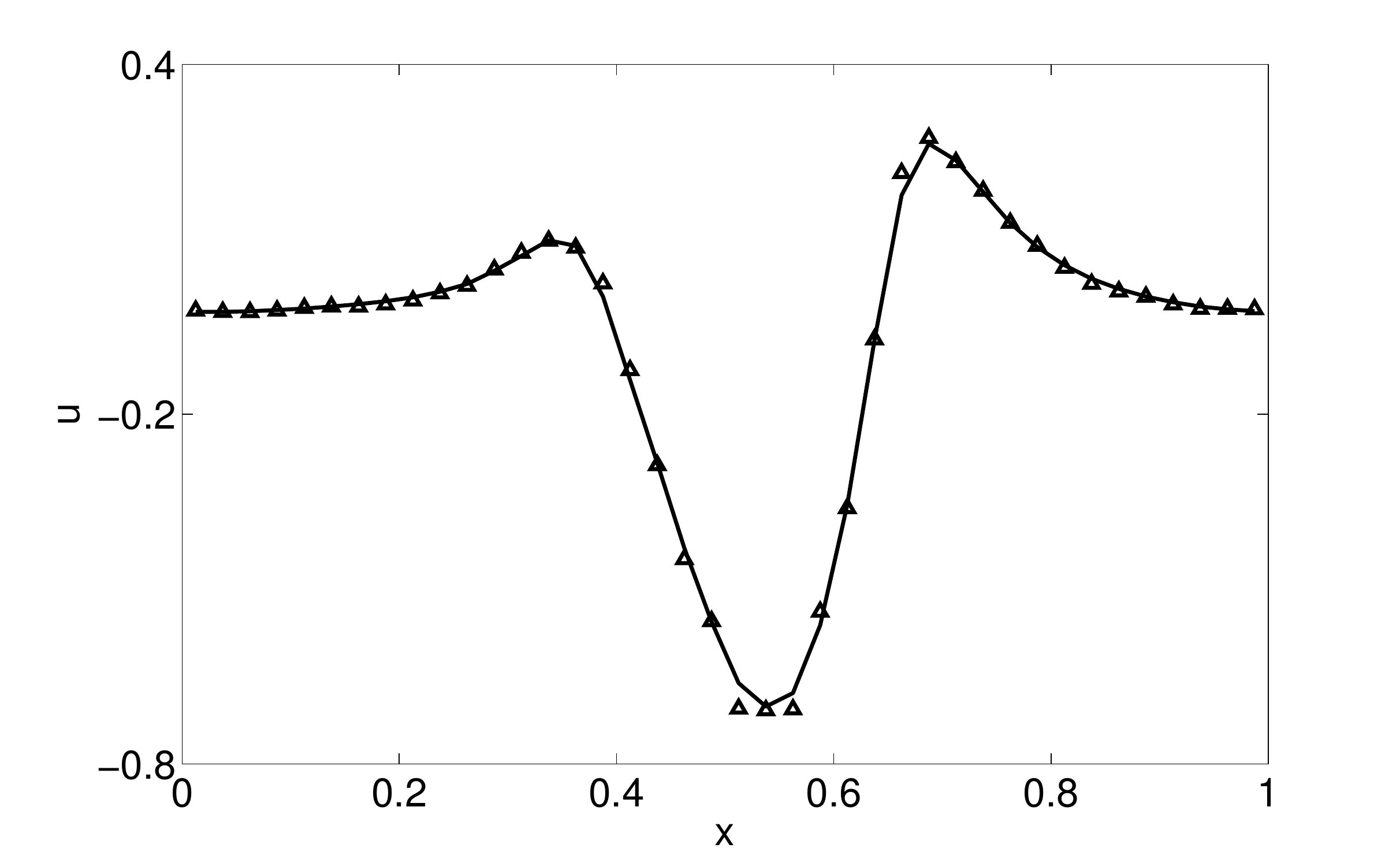}
  \end{minipage}
\\
  \begin{minipage}[t]{0.5\linewidth}
       \centering
       \includegraphics[width=1.1\textwidth]{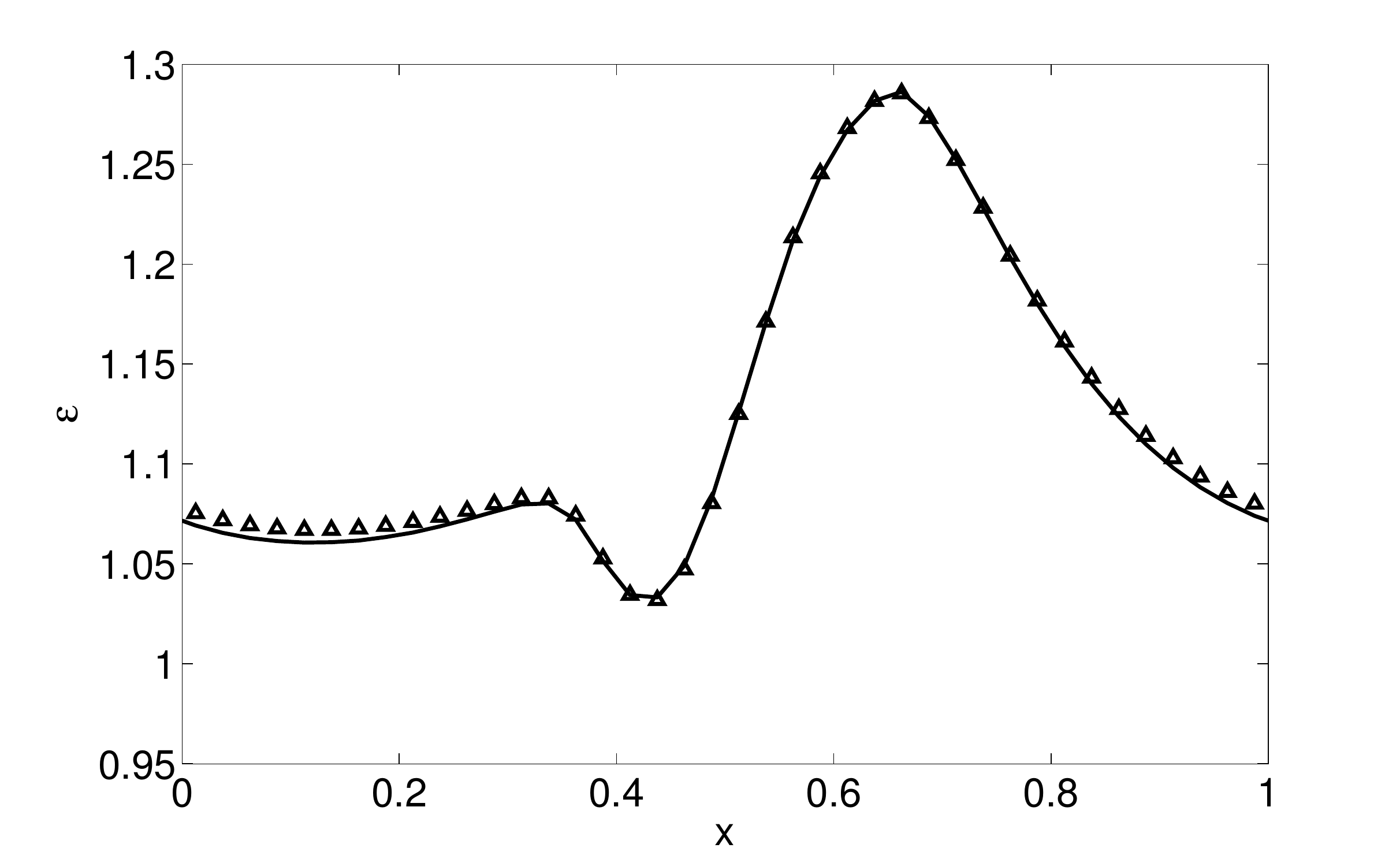}
 \end{minipage}
 \begin{minipage}[t]{0.5\linewidth}
      \centering
      \includegraphics[width=1.1\textwidth]{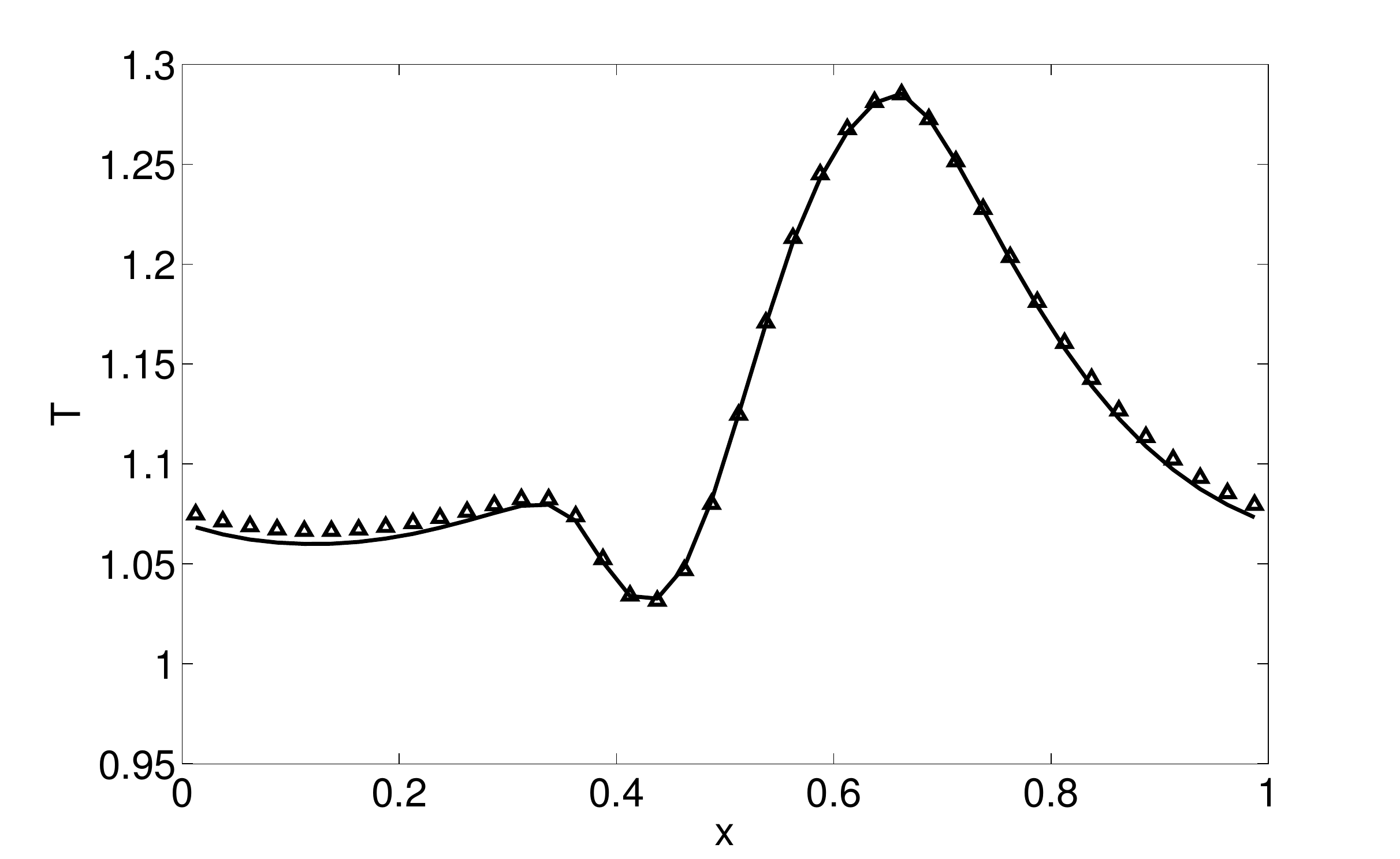}
  \end{minipage}
  \\
    \begin{minipage}[t]{0.5\linewidth}
       \centering
       \includegraphics[width=1.1\textwidth]{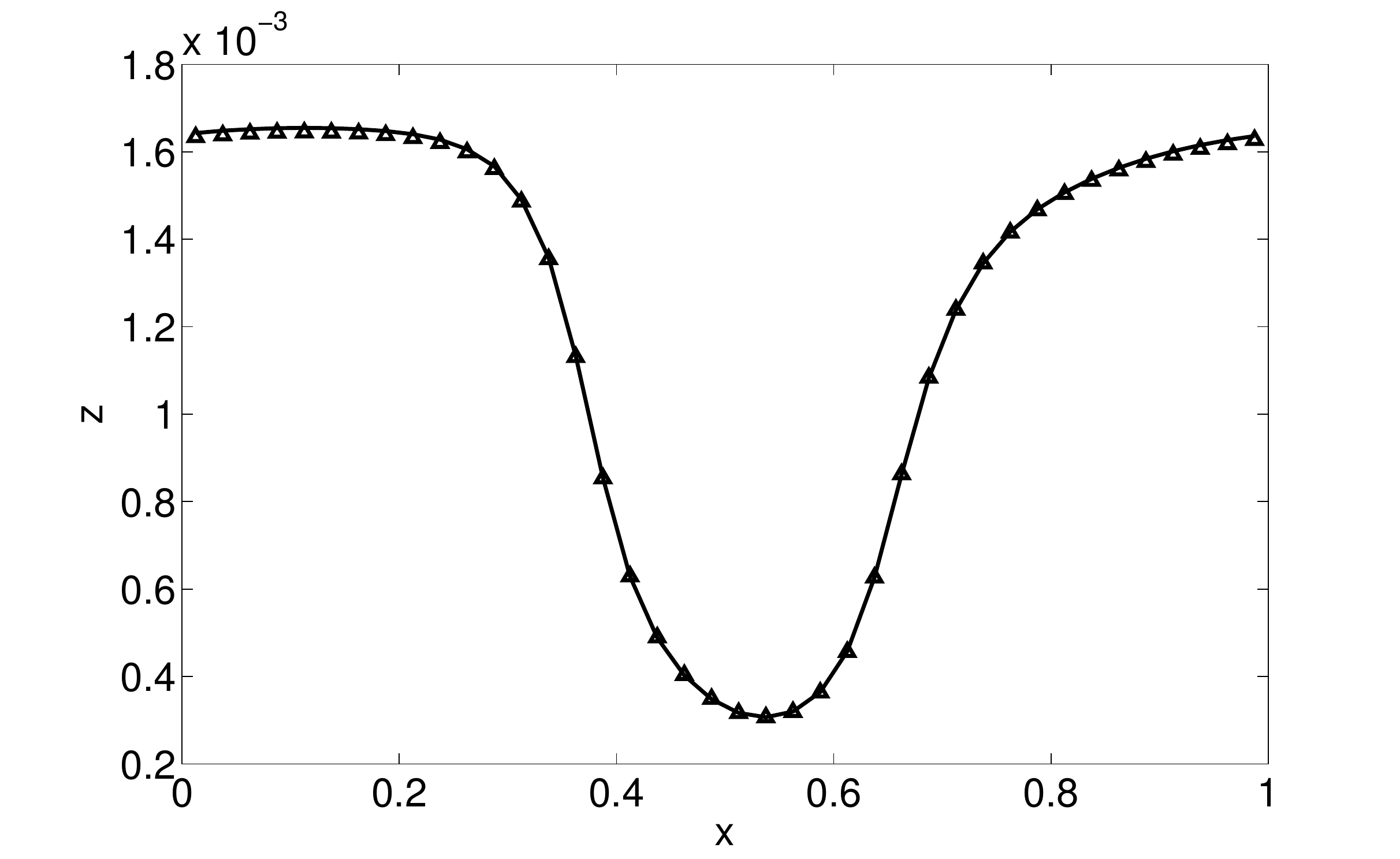}
 \end{minipage}
 \begin{minipage}[t]{0.5\linewidth}
      \centering
      \includegraphics[width=1.1\textwidth]{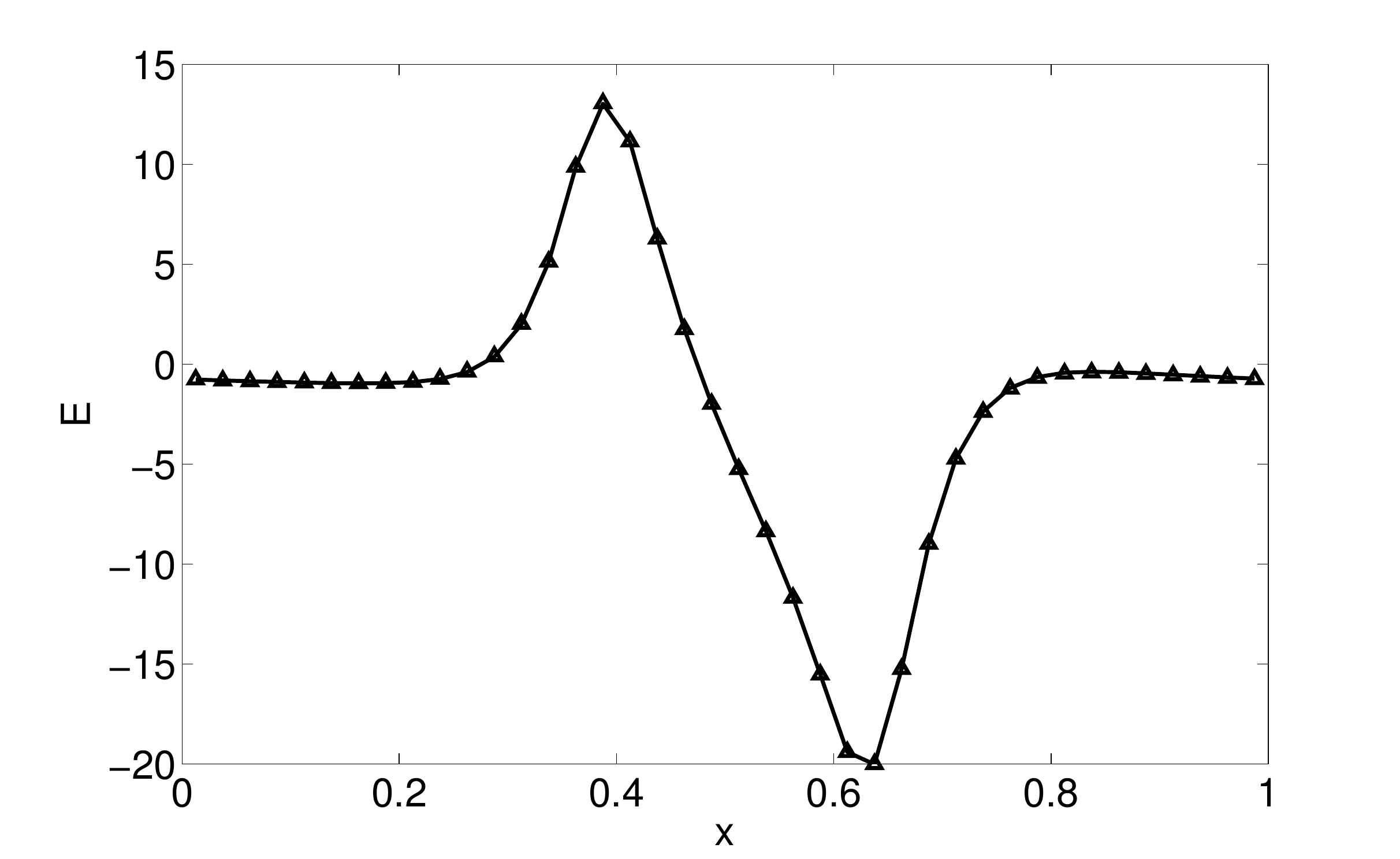}
  \end{minipage}
 \caption{Density $\rho$, velocity $u$, internal energy $\energy$, temperature $T$, fugacity $z$, and electric field $E$. `---' is the AP scheme, `$\triangle$' is the kinetic scheme (for the ET system). Here $\alpha = 1e-3$, $\eta = 0.01$, $N_x = 40$, $N_k = 32$, $\Dt = 0.2 \Dx^2$.}
 \label{fig: eta1e-2alpha1e-3}
 \end{figure}
 
 \begin{figure}[!h]
  \begin{minipage}[t]{0.5\linewidth}
       \centering
       \includegraphics[width=1.1\textwidth]{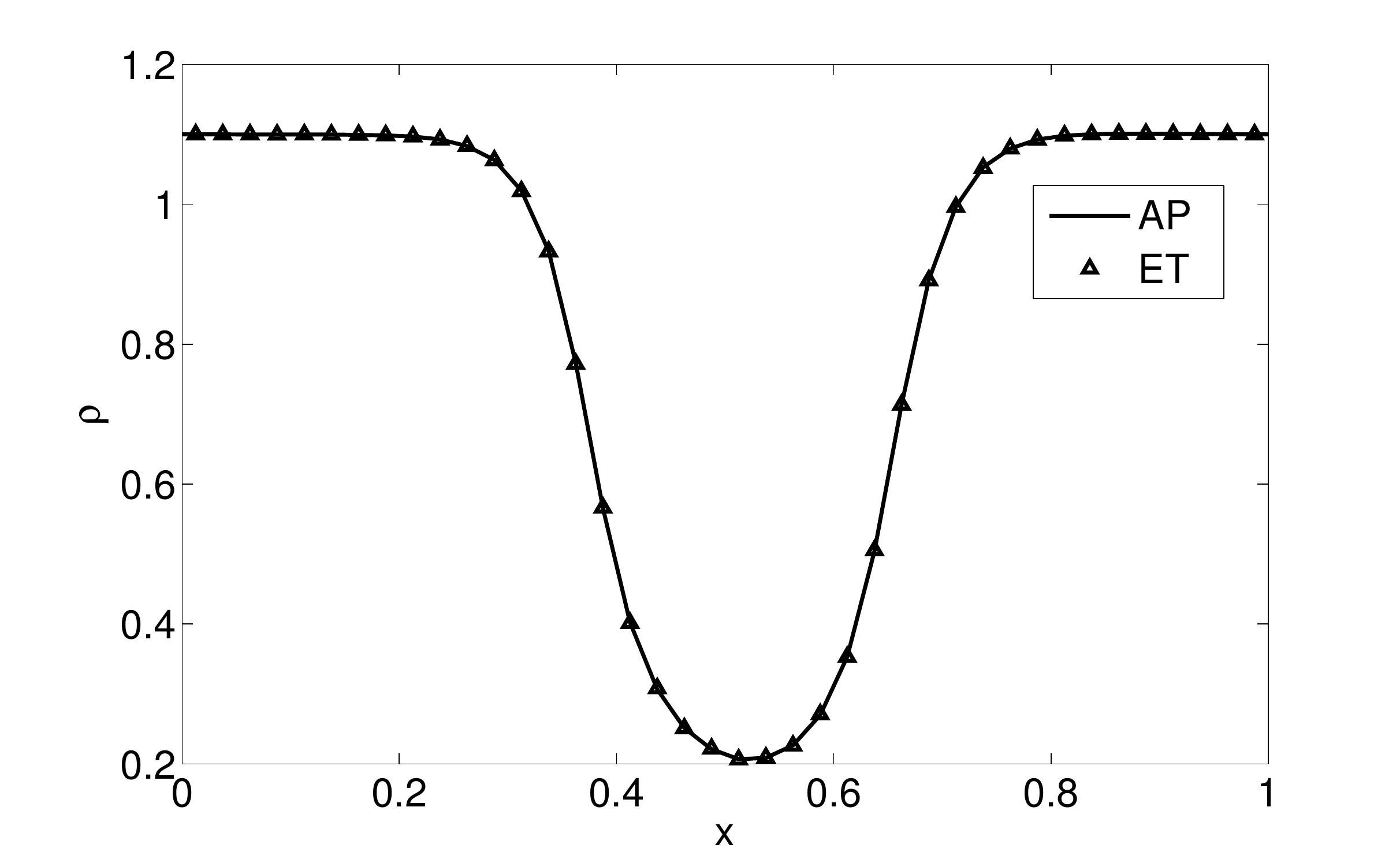}
 \end{minipage}
 \begin{minipage}[t]{0.5\linewidth}
      \centering
      \includegraphics[width=1.1\textwidth]{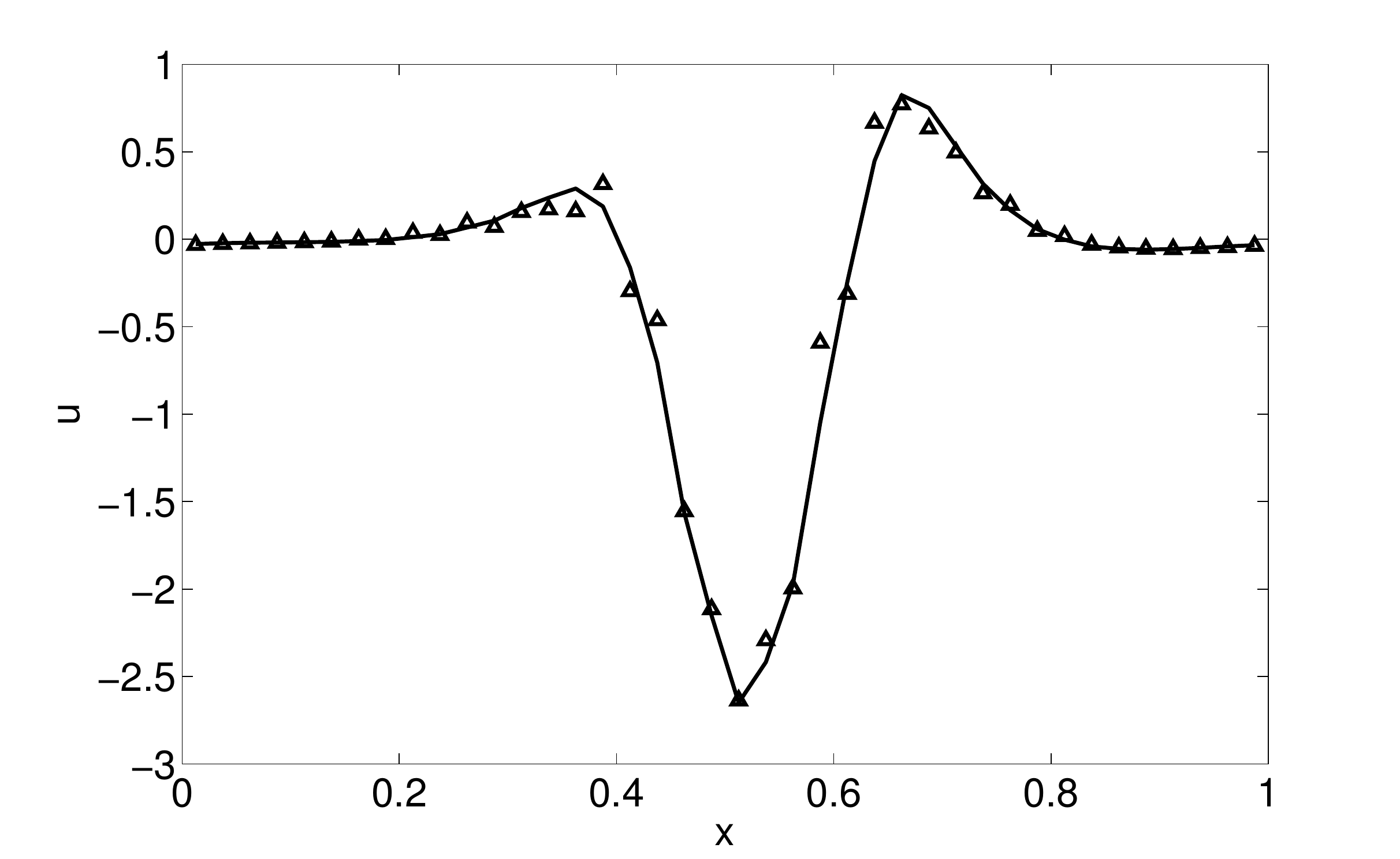}
  \end{minipage}
\\
  \begin{minipage}[t]{0.5\linewidth}
       \centering
       \includegraphics[width=1.1\textwidth]{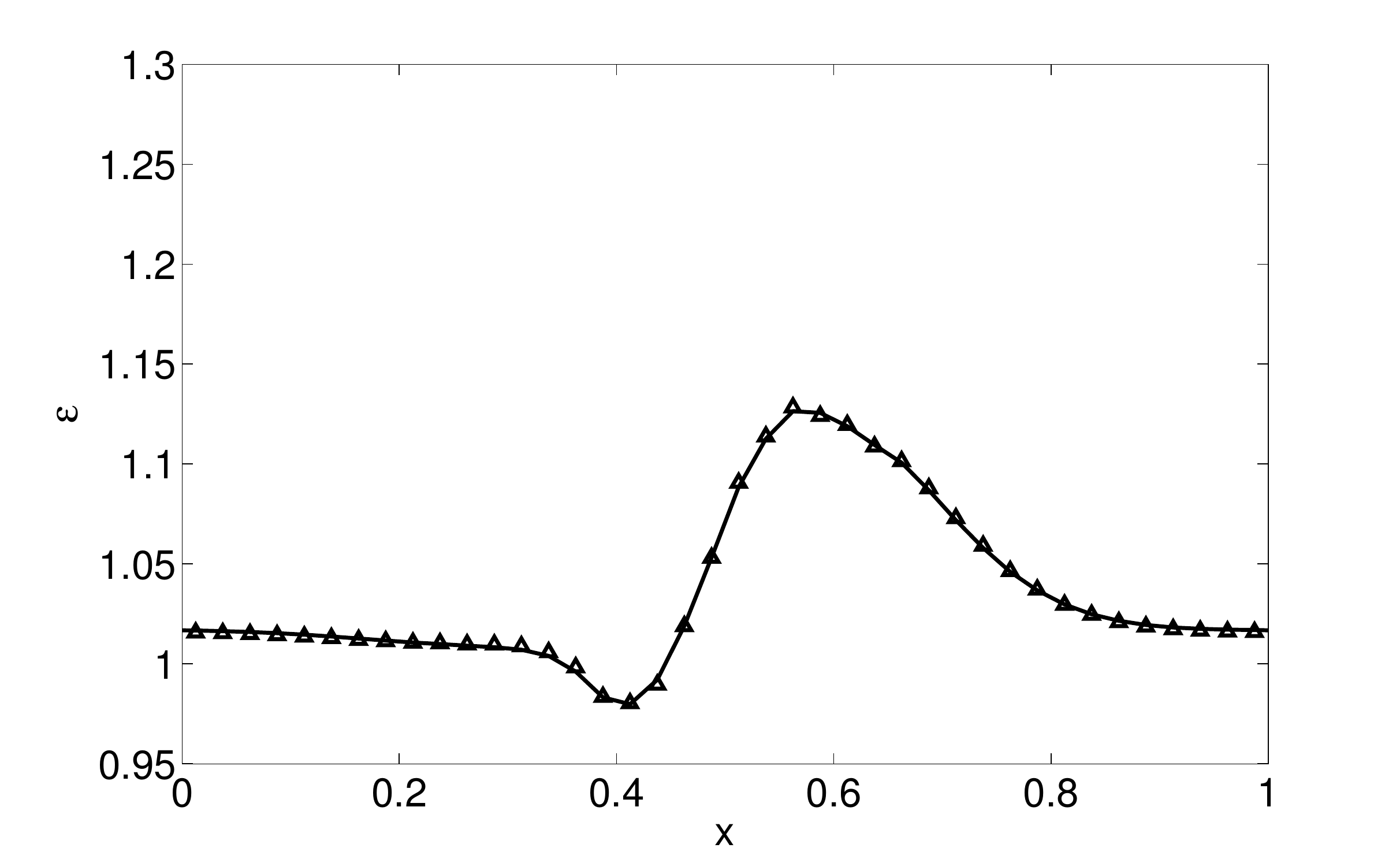}
 \end{minipage}
 \begin{minipage}[t]{0.5\linewidth}
      \centering
      \includegraphics[width=1.1\textwidth]{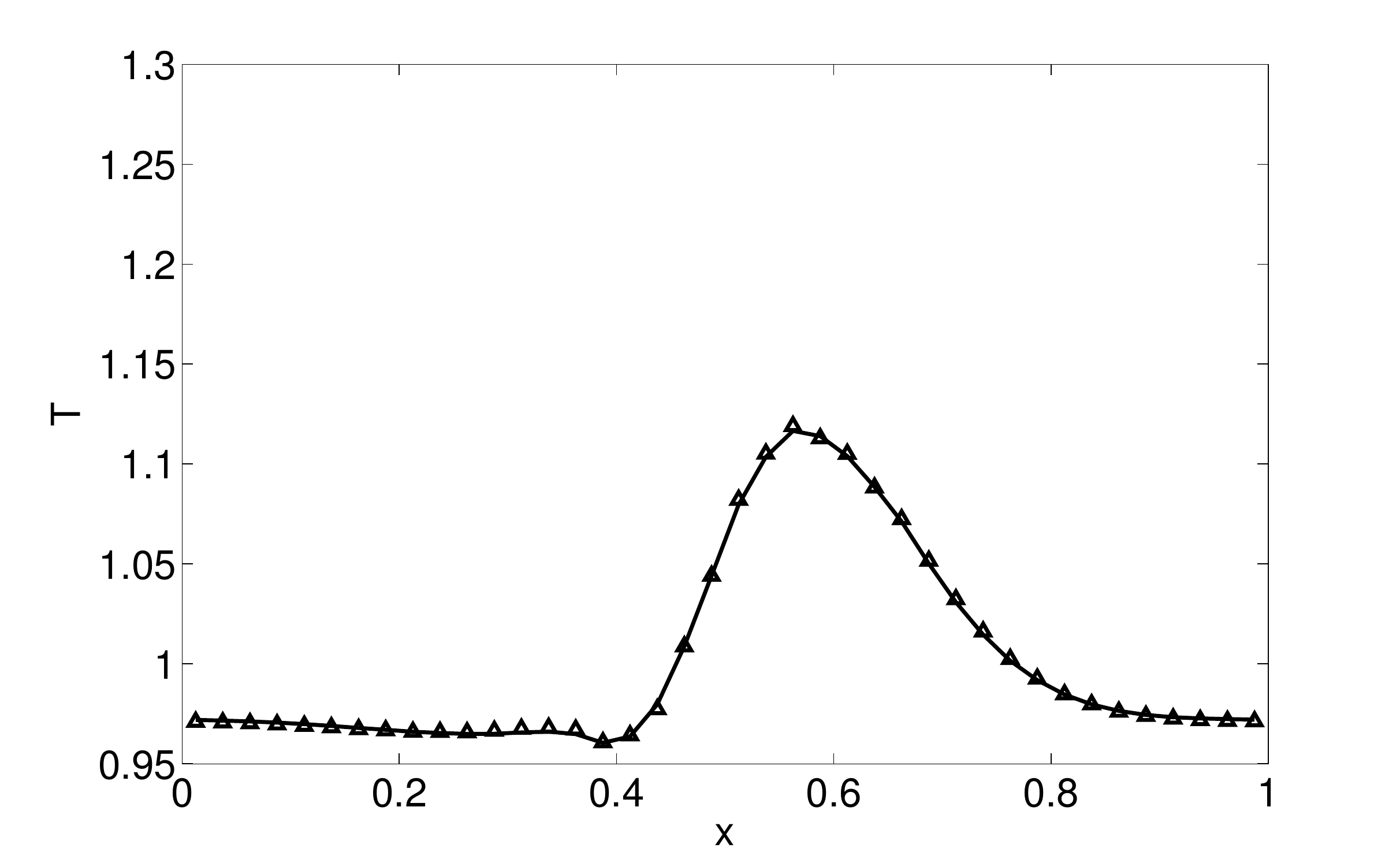}
  \end{minipage}
  \\
    \begin{minipage}[t]{0.5\linewidth}
       \centering
       \includegraphics[width=1.1\textwidth]{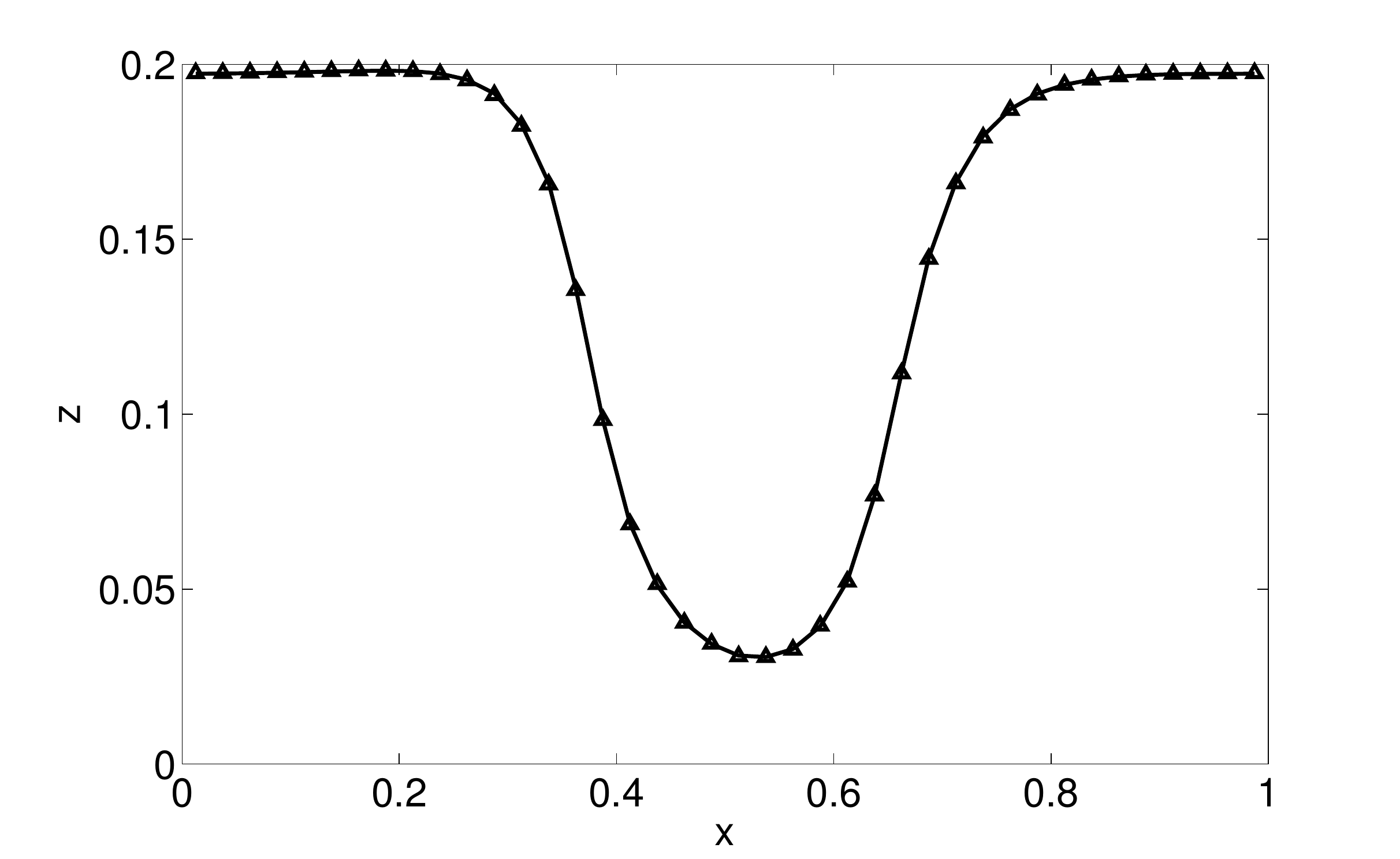}
 \end{minipage}
 \begin{minipage}[t]{0.5\linewidth}
      \centering
      \includegraphics[width=1.1\textwidth]{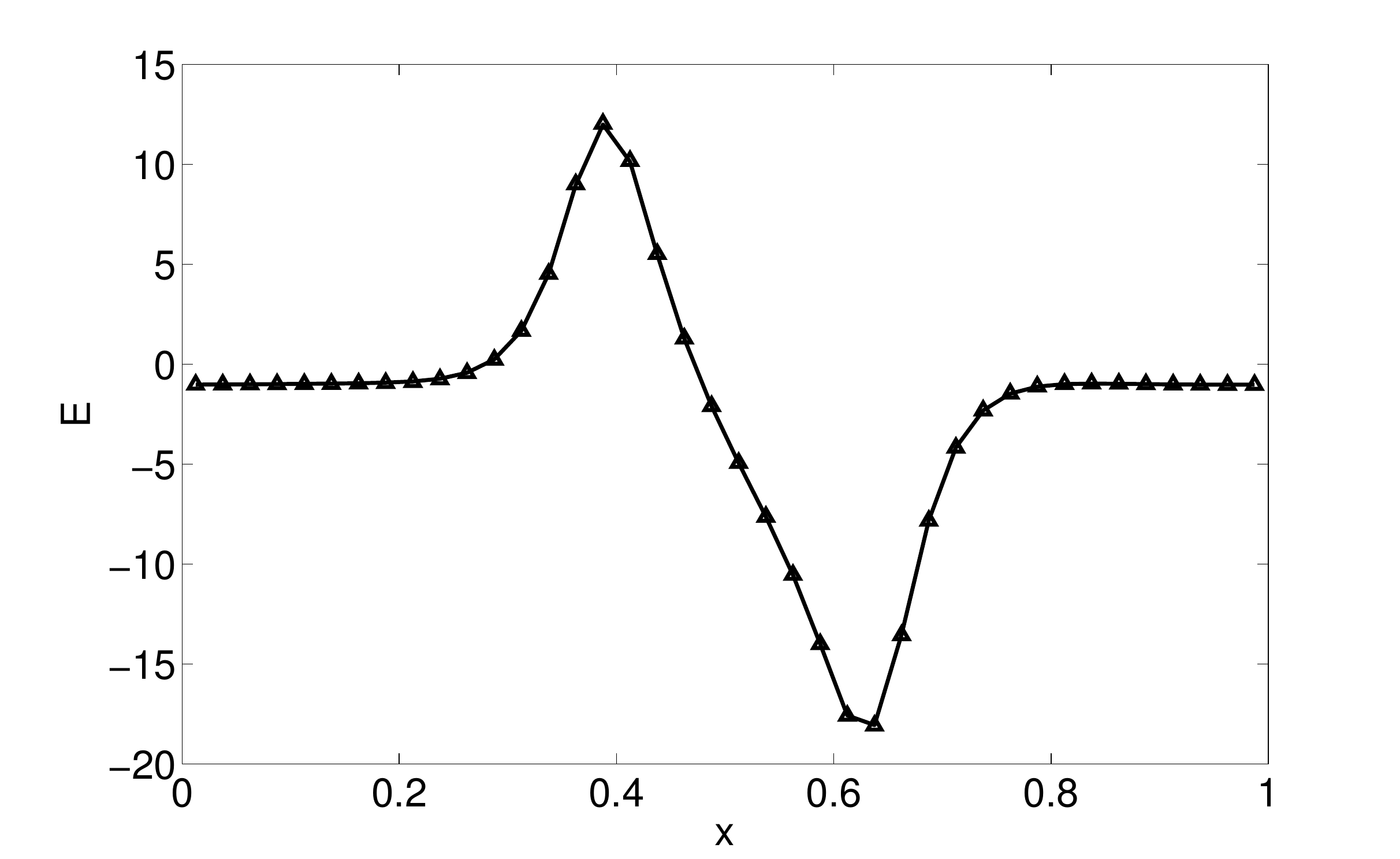}
  \end{minipage}
 \caption{Density $\rho$, velocity $u$, internal energy $\energy$, temperature $T$, fugacity $z$, and electric field $E$. `---' is the AP scheme, `$\triangle$' is the kinetic scheme (for the ET system). Here $\alpha = 1e-3$, $\eta = 1$, $N_x = 40$, $N_k = 64$, $\Dt = 0.2 \Dx^2$.}
  \label{fig: eta1alpha1e-3}
 \end{figure}

\section{Conclusion}
We constructed an asymptotic preserving scheme for a multiscale semiconductor Boltzmann equation (coupled with Poisson) that in the diffusive regime captures the energy-transport limit. Because of the two different scales appearing in the collision operator, the previous AP schemes for the drift-diffusion limit does not work well. A key ingredient in our scheme is to set a suitable threshold for the stiffer collision term such that once this threshold is crossed, the less stiff collision begins to dominate. In this way, the convergence of the numerical solution to the local equilibrium mimics the Hilbert expansion at the continuous level. We analyzed this asymptotic behavior using a simplified BGK model. A new fast spectral method for the elastic collision operator was also introduced. Several numerical results confirmed the uniform stability of our scheme with respect to the mean free path, from kinetic regime to energy-transport regime. In particular, a 1-D $n^+$--$n$--$n^+$ ballistic silicon diode was simulated to verify its efficiency in both degenerate and non degenerate cases. 

\section*{Acknowledgments}
This project was initiated during the authorsÕ participation at the KI-Net Conference ``Quantum Systems" held by CSCAMM, University of Maryland, May 2013. Both authors acknowledge the generous support from the institute. J.H. thanks Prof. Shi Jin for initially pointing out the ET model, and Prof. Irene Gamba and Prof. Lexing Ying for helpful discussion and support. L.W. thanks Prof. Robert Krasny for fruitful discussion on modeling of semiconductors. 

\bibliographystyle{siam}
\bibliography{semiB2reference}

\end{document}